\documentclass[11pt,oneside]{amsart}
\usepackage[top=25mm, bottom=25mm, left=20mm, right=20mm]{geometry}
\usepackage{amsfonts,amssymb,amsmath,amsthm,amsxtra}
\usepackage[T1]{fontenc}
\usepackage[utf8]{inputenc}
\usepackage{bm}
\usepackage{mathtools}
\usepackage{dsfont}
\usepackage{mathrsfs}
\usepackage{enumitem}
\allowdisplaybreaks

\usepackage{graphics}
\usepackage{hyperref}

\numberwithin{equation}{section}
\newtheorem{theorem}[equation]{Theorem}

\newtheorem{lemma}[equation]{Lemma}
\newtheorem{proposition}[equation]{Proposition}
\theoremstyle{definition}

\newtheorem{definition}[equation]{Definition}

\newcommand{\Mod}[1]{\ (\mathrm{mod}\ #1)}

\DeclareMathOperator\Lip{Lip}
\author{Leonidas Daskalakis} 
\thanks{The author was supported by the National Science Center (NCN Poland) Sonata grant No. 2024/55/D/ST1/00468.}
\address[Leonidas Daskalakis]{Jagiellonian University, Faculty of Mathematics and Computer Science, Institute of Mathematics, 30-348 Krak\'ow, Poland}
\email{leonidas.daskalakis@uj.edu.pl}
\begin{document}
\title[Pointwise ergodic theorems along oscillating sequences]{Pointwise ergodic theorems\\ along oscillating sequences}
\maketitle
\begin{abstract} We establish the first pointwise convergence result for nonconventional ergodic averages modeled along a large class of oscillating sequences, treating, for example, orbits of the form $\big(\lfloor n^d\sin(e\pi n)\rfloor\big)_{n\in\mathbb{N}}$, $d\in\mathbb{N}$. More specifically, for every probability space $(X,\mathcal{B},\mu)$ equipped with an invertible measure-preserving transformation $T\colon X\to X$ and every $f\in L^p_{\mu}(X)$ with $p\in(1,\infty]$, we prove that the limit
\[
\lim_{N\to\infty}\frac{1}{N}\sum_{n=1}^Nf\big(T^{\lfloor P(n)\psi(\alpha n)\rfloor}x\big)\quad\text{exists for $\mu$-a.e. $x\in X$,}
\]
where $P$ is a non-constant polynomial with real coefficients, $\psi$ is $1$-periodic and real-analytic, and $\alpha$  has irrationality exponent smaller than $1+\sqrt{\deg(P)+1}$. On the one hand, the latter assumption is satisfied by a.e. $\alpha\in\mathbb{R}$, and, on the other hand, our result covers any of the following concrete orbits
\[
\lfloor n\cos(e\pi n)\rfloor\text{,}\quad\lfloor n^{37}\sin(n)\rfloor\text{,}\quad\big\lfloor \sqrt{2}n^{10}e^{\sin(\pi (\sqrt[5]{7}+1) n)}\big\rfloor\text{,}\quad \big\lfloor \big((\log 2) n^3+e n\big)e^{e^{\cos(\pi \sqrt{2} n)}}\big\rfloor\text{.}
\]
The proof employs the so-called one-frequency circle method for estimating exponential sums with phases $\big(\xi \lfloor P(n)\psi(\alpha n)\rfloor\big)_{1\le n\le N}$, but the major arc approximants of our exponential sums do not consist of their direct continuous counterparts. In the major arc regime we decouple the $\psi$-oscillatory part of the phase from the polynomial one, and approximate the exponential sum by $\int_0^1\big(\frac{1}{N}\int_0^Ne^{2\pi i\xi P(t)\psi( s)}dt\big)ds$. This represents a novel phenomenon for problems of this kind, and, interestingly, our proof exploits the coefficient-free variational estimates of Radon-type operators.
\end{abstract}
\section{Introduction}
\subsection{Brief historical remarks and statement of our main theorems}
Birkhoff's ergodic theorem asserts that given a probability space equipped with a measure-preserving transformation $T\colon X\to X$ and a function $f\in L^1_{\mu}(X)$, the so-called conventional ergodic averages
\begin{equation}\label{Birconv}
\frac{1}{N}\sum_{n=1}^Nf(T^nx)\quad\text{converge as $N\to\infty$ for $\mu$-almost every $x\in X$.}
\end{equation}
This is a fundamental result in ergodic theory and both its implications as well as its emergent proof-techniques highlight a variety of connections of several mathematical fields. For example, it readily implies Kolmogorov's strong law of large numbers, and, in fact, it may well be the most efficient way of establishing it. Curiously, the modern and most robust proof of \eqref{Birconv} essentially reduces it to the weak-type $(1,1)$ bounds of the Hardy--Littlewood maximal function.
 
Establishing variants of the assertion \eqref{Birconv} for nonconventional ergodic averages taken along a sequence of natural numbers $(a_n)_{n\in\mathbb{N}}$, that is, proving $\mu$-a.e. convergence for $\frac{1}{N}\sum_{n=1}^Nf\circ T^{a_n}$ for functions belonging to some $L^p$-space, has been intensely studied in the last four decades. The first results in this direction appeared in the late 1980s in a series of breakthrough works \cite{B1,B2,Bourgain} by Bourgain where pointwise convergence for ergodic averages taken along polynomial and prime orbits was established. Therein a variety of different techniques from analytic number theory, harmonic analysis and probability theory was employed, and the methods introduced laid the foundations for furthering our understanding of pointwise convergence phenomena in ergodic theory. 

Before proceeding with our brief historical remarks, we introduce the following definition, which emerged from this line of investigations.
\begin{definition}[Universally $L^p$-good sequences]
\label{def:1}
We say that an integer sequence $(a_n)_{n\in\mathbb{N}}$ is
\emph{universally $L^p$-good} for some $p\in[1,\infty]$ if for every probability space
$(X,\mathcal{B},\mu)$, every invertible measure-preserving
transformation $T:X\to X$, and every $f\in L_{\mu}^p(X)$, the averages
\[
\frac{1}{N}\sum_{n=1}^Nf(T^{a_n}x)\quad\text{converge as $N\to\infty$ for $\mu$-almost every
$x\in X$.
}
\]
\end{definition}
With this definition in mind the aforementioned Bourgain's results may be formulated both more precisely and concisely. He showed that for every $P\in\mathbb{R}[x]$ the sequence $(\lfloor P(n)\rfloor )_{n\in\mathbb{N}}$ is universally $L^p$-good for every $p>1$ and also that the sequence of prime numbers is universally $L^p$-good for $p>\frac{1+\sqrt{3}}{2}$. Wierdl \cite{Wierdl} extended the latter result to the entire range $p>1$ $\footnote{This was later shown to be sharp \cite{LaV1}.}$. Since then, a wide range of classes of integer sequences has been studied in the context of universal $L^p$-goodness, and for a comprehensive overview of the current state of the art, we refer the reader to subsection~1.1 in \cite{DMW}. 

To the best of the author's knowledge, all known results in the literature are concerned with sequences of the form $(\lfloor f(n)\rfloor)_{n\in\mathbb{N}}$ with $f\colon \mathbb{N}\to\mathbb{R}$ being eventually monotonic. Our main result does not fall into this category, but, before making precise formulations, let us remind the reader a standard notion from Diophantine approximation that will play a role in the sequel.
\begin{definition}[Irrationality exponent]\label{irexpdef}
For every real number $\alpha$ we define its \textit{irrationality exponent}, denoted by $\omega(\alpha)$, as the supremum of the following set
\begin{equation}\label{irexpset}
\Big\{\omega\in(0,\infty):\,\text{there exists infinitely many coprime }(p,q)\in\mathbb{Z}\times\mathbb{N}\text{ with }0<\big|x-p/q\big|<q^{-\omega}\Big\}\text{.}
\end{equation}
\end{definition}
It is not difficult to see that every rational number has irrationality exponent $1$, and by Dirichlet's theorem every irrational has irrationality exponent at least $2$. Moreover, a standard Borel-Cantelli argument shows that almost all real numbers, with respect to the Lebesgue measure, have irrationality exponent $2$. 

One of our main theorems reads as follows.
\begin{theorem}\label{realanalyticformulation}
Assume that $\alpha$ is a real number with irrationality exponent $2$, $P\in\mathbb{R}[x]$ is a non-constant polynomial, and $\psi\colon\mathbb{R}\to\mathbb{R}$ is a $1$-periodic real-analytic function. Then the sequence
\begin{equation*}
\big(\lfloor P(n)\psi(\alpha n)\rfloor\big)_{n\in\mathbb{N}}\text{ is universally $L^p$-good for every $p\in(1,\infty]$.}
\end{equation*} 
\end{theorem}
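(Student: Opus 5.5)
We follow the standard Bourgain scheme. By the Calderón transference principle it suffices to work on $\mathbb{Z}$ with the shift and the averaging operators
\[
A_N f(x)=\frac1N\sum_{n=1}^N f\big(x-\lfloor P(n)\psi(\alpha n)\rfloor\big).
\]
For these we aim to prove two bounds:
\begin{itemize}
\item a maximal inequality $\|\sup_N|A_Nf|\|_{\ell^p}\lesssim\|f\|_{\ell^p}$ for $p\in(1,\infty]$;
\item an oscillation or $r$-variation estimate $\|V^r(A_Nf:N\in\mathbb{D})\|_{\ell^p}\lesssim\|f\|_{\ell^p}$ along a lacunary set $\mathbb{D}=\{\lfloor\lambda^k\rfloor\}$.
\end{itemize}
Pointwise convergence for all $N$ then follows from the lacunary case by the usual $\lambda\to1$ argument, since $f\ge0$ may be assumed. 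If $\psi$ is constant the result is Bourgain's polynomial theorem, so we assume $\psi$ is non-constant.

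Everything is driven by the multiplier $m_N(\xi)=\frac1N\sum_{n\le N}e^{2\pi i\xi\lfloor P(n)\psi(\alpha n)\rfloor}$. As a first step we remove the floor, writing $e(\xi\lfloor y\rfloor)=e(\xi y)e(-\xi\{y\})$ and expanding $e(-\xi\{y\})$ in a Fourier series in $\{y\}$. This reduces matters to sums $\frac1N\sum_n e\big((\xi+j)P(n)\psi(\alpha n)\big)$ with coefficients decaying in $j$. The errors near discontinuities are controlled by equidistribution of $P(n)\psi(\alpha n)$ mod $1$. This equidistribution is itself a consequence of the minor-arc bound below, applied with frequencies $j\in\mathbb{Z}\setminus\{0\}$.

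Next we use a one-frequency circle method: $\xi$ is on a major arc if $|\xi|\lesssim N^{-\deg P+\varepsilon}$ (mod 1). In that regime the phase $\xi P(n)\psi(\alpha n)$ varies slowly in $n$ at the scale of the period $1/\alpha$ of the oscillating factor. We split $[1,N]$ into blocks of length $L=N^{\delta}$. On each block $P$ is essentially constant in relative terms, while by the irrationality exponent $2$ assumption the points $\alpha n$ mod $1$ equidistribute with discrepancy $\lesssim L^{-1+\varepsilon}$. Koksma--Hlawka then gives
\[
m_N(\xi)\approx \int_0^1\frac1N\int_0^N e\big(\xi P(t)\psi(s)\big)\,dt\,ds=: M_N(\xi).
\]
The operator with multiplier $M_N$ is an average over $s\in[0,1]$ of continuous Radon-type averages along the polynomial curve $t\mapsto\psi(s)P(t)$. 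The coefficient-free variational estimates (uniform in the coefficients of the polynomial) give $V^r$ and maximal bounds for each $s$ with constants independent of $\psi(s)$, and Minkowski's inequality in $s$ transfers them to $M_N$. Points where $\psi(s)$ is near zero are harmless because the bounds do not depend on the size of the coefficient.

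On the minor arcs we need $|m_N(\xi)|\lesssim N^{-\eta}$, and I expect this to be the main obstacle. The plan is to apply van der Corput differencing or Weyl differencing $\deg P$ times in $n$. Each difference of $P(n)\psi(\alpha n)$ produces terms involving the Taylor expansion of $\psi$, which is entire locally by real analyticity. We then expand $\psi$ in its Fourier series $\sum_k c_k e(k\alpha n)$ with exponentially decaying $c_k$, so the phase becomes a combination of $n^a e(k\alpha n)$. After differencing we reach linear exponential sums in $n$ with frequencies like $k\alpha$, which are controlled by continued-fraction bounds on $\|k\alpha\|$ using $\omega(\alpha)=2$, where $|k|\le N^{\varepsilon}$ suffices by the decay of $c_k$. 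The zero set of $\psi'$ must be handled separately. Near critical points of $\psi$, stationary-phase-type losses occur; we would isolate $n$ with $\alpha n$ mod 1 near the finitely many critical points and bound them trivially, using equidistribution. Finally, the Fourier-analytic approximation $\|\sup_{N\in\mathbb{D}}|(m_N-M_N)^\vee*f|\|_{\ell^2}$ is summable over the dyadic scales. For $p\neq2$ we interpolate with the trivial $\ell^1$ and $\ell^\infty$ bounds, which yields the full range $p>1$.
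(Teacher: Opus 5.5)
You follow the paper on the reduction steps: transference, floor removal via the truncated Fourier expansion of $e(-\xi\{y\})$, the approximant $\int_0^1\frac1N\int_0^N e(\xi P(t)\psi(s))\,dt\,ds$, and coefficient-free Radon variational bounds combined with Minkowski in $s$. Your major-arc argument is a valid alternative to the paper's: on blocks of length $L=N^{\delta}$, Koksma's inequality with variation $\lesssim|\xi|N^{d}\le N^{\varepsilon}$ and discrepancy $L^{-1+\varepsilon'}$ gives total error $N^{1+\varepsilon-\delta(1-\varepsilon')}$.

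The genuine gap is the minor-arc bound, which is the heart of the theorem. Differencing in $n$ does not reduce the complexity of $F(n)=\xi P(n)\psi(\alpha n)$, because $\psi(\alpha n)$ oscillates at unit scale in $n$:
\[
F(n+h)-F(n)=\xi\big(P(n+h)-P(n)\big)\psi(\alpha n+\alpha h)+\xi P(n)\big(\psi(\alpha n+\alpha h)-\psi(\alpha n)\big).
\]
The last term is again a degree-$d$ polynomial times a $1$-periodic function of $\alpha n$. For typical $h$ it has size $\asymp|\xi|N^{d}$, and only shifts with $\|h\alpha\|$ tiny (essentially multiples of a good denominator of $\alpha$) make it small. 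So no number of differencings produces linear phases. Expanding $\psi$ in its Fourier series linearizes the \emph{phase}, not the summand $e(F(n))$. The sums $\sum_n e(k\alpha n)$ therefore never appear, and the bounds on $\|k\alpha\|$ have nothing to act on. A Jacobi--Anger-type expansion of $e(F(n))$ would instead produce about $|\xi|N^{d}$ frequencies with oscillating Bessel-type weights. Your remark about isolating critical points of $\psi$ likewise does not address what actually must be controlled.

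The missing idea is to make the oscillation slow by passing to arithmetic progressions. For each $N$, choose $\alpha=p/q+\theta$ with $q\le N^{\rho}$ and $|\theta|\le (qN^{\rho})^{-1}$, write $n=qm+r$, and use periodicity: $\psi(\alpha(qm+r))=\psi(\theta(qm+r)+rp/q)$. On each progression the phase $F_r(m)=\xi P(qm+r)\psi(\theta(qm+r)+rp/q)$ satisfies $N^{-\sigma}|\xi|q^{j}|\theta|^{j-d}\lesssim|F_r^{(j)}|\lesssim|\xi|q^{j}|\theta|^{j-d}$ for $d\le j\le d_0$. This fails only for a proportion $\lesssim N^{-\sigma'}+q^{-1}$ of residues $r$, namely those where some $\psi^{(i)}(rp/q)$ with $0\le i\le d_0$ is small. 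This is where finite-order vanishing of \emph{every} derivative of $\psi$ (from real analyticity) is used. Vinogradov's method with the Mean Value Theorem, applied with one well-chosen derivative $j_0=j_0(\xi,N)$, then gives the power saving. The Diophantine hypothesis enters through the lower bounds $q\gtrsim N^{\rho/(\omega'-1)}$ and $|\theta|\gtrsim N^{-\rho\omega'}$. The condition $\omega<1+\sqrt{d+1}$, automatic when $\omega=2$, is exactly what permits a choice $1-1/\omega<\rho<(d+1)/(d+\omega)$. Without such a mechanism the minor-arc decay remains unproved, and with it the floor removal and the comparison of $m_N$ with the approximant away from $\xi=0$.
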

For the sake of completeness, let us briefly recall that a function on $\mathbb{R}$ is called $1$-periodic if it has period $1$, and a function $\psi\colon\mathbb{R}\to\mathbb{R}$ is called real-analytic if it is smooth and for every $x\in\mathbb{R}$ there exists $\varepsilon>0$ such that the Taylor expansion of $\psi$ at $x$ coincides with $\psi$ on $(x-\varepsilon,x+\varepsilon)$.  

By our preceding comments, we immediately see that Theorem~$\ref{realanalyticformulation}$ implies that for a.e. $\alpha\in\mathbb{R}$ we have that for every $P\in\mathbb{R}[x]$ non-constant and $\psi\colon\mathbb{R}\to\mathbb{R}$ $1$-periodic and real-analytic, the sequence $(\lfloor P(n)\psi(\alpha n)\rfloor)_{n\in\mathbb{N}}$ is universally $L^p$-good for every $p>1$.

Apart from an ``almost everywhere'' result, Theorem~$\ref{realanalyticformulation}$ establishes pointwise convergence for ergodic averages modeled along  a variety of concrete orbits. For example, a standard result from the theory of continued fractions asserts that the irrationality exponent of $e$ is $2$, so we immediately$\footnote{Our examples here necessitate an application of our theorem for $\alpha=e/2$. Using Definition~$\ref{irexpdef}$ it is easy to check that for every $\alpha\in\mathbb{R}$ we have that $\omega(\alpha/2)=\omega(\alpha)$, and thus $\omega(e/2)=2$. In fact, for every irrational $\alpha$ and $a,b,c,d\in\mathbb{Z}$ with $ad-bc\neq0$, we have that $\omega\big(\frac{a\alpha+b}{d\alpha+d}\big)=\omega(\alpha)$, that is, the irrationality exponent is invariant under any fractional linear transformation with integer coefficients.}$ obtain that the following sequences are universally $L^p$-good for every $p>1$:
\begin{equation}\label{examples}
\lfloor n\sin( e\pi n)\rfloor\text{,}\quad \big\lfloor (\pi n^3+n)e^{\cos(e\pi n)}\big\rfloor\text{,}\quad \Big\lfloor (en^{10}+\pi n^2-10)e^{e^{\sin^2(e\pi n)}}\Big\rfloor\text{.}
\end{equation} 
In the same spirit, a famous result due to Roth \cite{Roth} from 1955 asserts that every real irrational algebraic number has irrationality exponent $2$, and thus Theorem~$\ref{realanalyticformulation}$ also addresses the following orbits
\begin{equation}\label{examples2}
\lfloor n\sin( \pi\sqrt{2} n)\rfloor\text{,}\quad\lfloor (\log 2)n^{5}e^{\sin(\pi (\sqrt[5]{7}+1) n)}\rfloor\text{,}\quad \Big\lfloor ( n^4+\pi^2n)\big(e^{e^{\cos(\pi \sqrt[3]{5} n)}}+100\sin^5(\pi \sqrt[3]{5} n)\big)\Big\rfloor\text{.}
\end{equation} 
Moreover, provided the verification of the recent breakthrough result asserting that the irrationality exponent of $\pi$ is $2$, see \cite{pirdm}, and using the fact that $\omega(\pi)=\omega\big(\frac{1}{2\pi}\big)$, we may additionally get that $\big(\lfloor n^d\sin(n)\rfloor\big)_{n\in\mathbb{N}}$ is universally $L^p$-good for every $p>1$ and $d\in\mathbb{N}$.

Finally, let us mention that Theorem~$\ref{realanalyticformulation}$ covers any orbit of the form
\[
\Big(\Big\lfloor P(n)\Big(\sum_{k=0}^K\big(a_k\cos^k(2\pi \alpha n)+b_k\sin^k(2\pi \alpha n)\big)\Big)\Big\rfloor\Big)_{n\in\mathbb{N}}\text{,} 
\]
with $P\in\mathbb{R}[x]$ a non-constant polynomial,  $K\in\mathbb{N}_0$, $(a_0,\dotsc,a_K),(b_0,\dotsc,b_K)\in\mathbb{R}^{K+1}$, and $\alpha$ a real number with irrationality exponent $2$.

We derive Theorem~$\ref{realanalyticformulation}$ as a corollary of the following more general theorem, covering orbits with $\psi$ belonging to a slightly wider class of $1$-periodic functions, and, more importantly, even allowing $\alpha$ to be much better approximated by rationals compared to typical numbers.
\begin{theorem}[Pointwise ergodic theorem along oscillating sequences]\label{Main}
Assume that $\psi\colon \mathbb{R}\to\mathbb{R}$ is a smooth $1$-periodic function such that none of its derivatives has a root of infinite order. Assume that $\alpha$ is a real number with irrationality exponent $\omega$, and $P\in\mathbb{R}[x]$ is a polynomial of degree $d\ge 1$ such that
$\omega<1+\sqrt{d+1}$. Then the sequence
\begin{equation*}
\big(\big\lfloor P(n)\psi(\alpha n)\rfloor\big)_{n\in\mathbb{N}}\text{ is universally $L^p$-good for every $p\in(1,\infty]$.}
\end{equation*}
\end{theorem}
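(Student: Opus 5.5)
By the Calderón transference principle it suffices to work on $\mathbb{Z}$ with the shift. There I would establish $\ell^p(\mathbb{Z})$ bounds for the maximal function and for an $r$-variational seminorm ($r>2$) of the averages
\[
A_Nf(x)=\frac1N\sum_{n=1}^N f\big(x-\lfloor P(n)\psi(\alpha n)\rfloor\big).
\]
I would do this along lacunary scales $N=\lfloor\lambda^k\rfloor$ for each $\lambda>1$ and then pass to all $N$ by the standard positivity/sandwiching argument. Pointwise convergence on a dense class, such as $\ell^1$-finitely supported functions, is then immediate. Positivity gives $\ell^\infty$ trivially. Since the orbit may also be negative or non-monotone, I would split $\{n\}$ according to the sign of $\psi(\alpha n)$, using equidistribution of $\alpha n$ together with the finiteness of the zero set of $\psi$, which follows from the no-infinite-order-root hypothesis. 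This reduces the problem to controlling the multipliers
\[
m_N(\xi)=\frac1N\sum_{n\le N}e^{2\pi i \xi\lfloor P(n)\psi(\alpha n)\rfloor}.
\]
I would remove the floor in the usual way, expanding $e(\xi\{y\})$ in a Fourier series with a smooth truncation so that only phases $(\xi+j)P(n)\psi(\alpha n)$ remain.

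Next I would carry out the one-frequency circle method. On the minor arcs, $|\xi|\gtrsim N^{-d}\,\cdot$(a small power of $N$) modulo $1$, I want $|m_N(\xi)|\lesssim N^{-\delta}$. To get this, I split $[1,N]$ into blocks of length $H$ on which $\alpha n$ is confined near a residue class via a rational approximation $a/q$ of $\alpha$ (obtained from Dirichlet's theorem, with $q$ controlled by the exponent $\omega$). On each arithmetic progression $n=n_0+qm$ the function $\psi(\alpha n)$ is a slowly varying smooth function of $m$. Van der Corput or Vinogradov estimates then apply to the phase $\xi P(n)\psi(\alpha n)$, with nondegeneracy of some derivative of order at most $d+1$ of $t\mapsto P(t)\psi(\beta t)$ coming from the assumption that no derivative of $\psi$ vanishes to infinite order; this yields finitely many bad points, treated by a van der Corput lemma with multiplicity. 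The balance between the block length, $q$, and the saving from the $(d+1)$-st derivative test is where $\omega<1+\sqrt{d+1}$ should appear: optimizing $q\approx N^{\theta}$ against the loss $q^{\omega-1}$ gives a quadratic condition of this shape.

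On the major arc, $|\xi|\le N^{-d+\epsilon}$, the phase is nearly constant on short blocks. Equidistribution of $\alpha n$ with a discrepancy bound (again via $\omega$ and Erdős–Turán) lets me replace $\psi(\alpha n)$ by an independent variable $s$, giving
\[
m_N(\xi)\approx \int_0^1\frac1N\int_0^N e^{2\pi i\xi P(t)\psi(s)}\,dt\,ds=:\Phi_N(\xi).
\]
The operator with multiplier $\Phi_N$ is an average over $s$ of continuous Radon-type averages along the curves $t\mapsto P(t)\psi(s)$. These have leading coefficient $\psi(s)$ varying. The coefficient-free variational estimates for polynomial Radon averages (uniform in the coefficients) give $r$-variation bounds uniformly in $s$, and Minkowski's inequality in $s$ transfers these bounds to the averaged operator. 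The error $m_N-\Phi_N$ is $O(N^{-\delta})$ uniformly, except that near $\xi=0$ I also need $|\Phi_N(\xi)-1|\lesssim N^d|\xi|$, which holds trivially. Summing over the lacunary $N$ via square functions and Plancherel handles $\ell^2$, and interpolation with the trivial bounds gives all $p>1$.

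I expect the main obstacle to be the minor arc estimate uniformly down to $|\xi|\approx N^{-d+\epsilon}$. For moderate $\xi$ the oscillation of $\psi(\alpha n)$ dominates and the relevant derivative of $P(t)\psi(\alpha t)$ must be bounded below quantitatively, while uniformity across transition scales needs careful bookkeeping. My first attempt would be Weyl differencing in $n$ in steps of $q$, with $q$ a convergent denominator of $\alpha$, reducing to derivative tests on progressions, and exploiting the Diophantine bound to avoid long stretches where $q\alpha$ is extremely close to an integer.
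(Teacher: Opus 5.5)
Your architecture matches the paper: Calder\'on transference, floor removal by a truncated Fourier expansion, a one-frequency circle method with the decoupled approximant $\int_0^1\frac1N\int_0^N e(\xi P(t)\psi(s))\,dt\,ds$, and variational bounds for it from the coefficient-free Radon estimates plus Minkowski in $s$. Two of your choices differ from the paper but are acceptable. The first is the lacunary-plus-sandwich reduction. The second is your major-arc route: on blocks of length $H=N^{1-2\kappa}$ (for $|\xi|\le N^{-d+\kappa}$) it is $\xi P(n)$, not the whole phase, that is nearly constant, and Koksma's inequality with discrepancy $H^{-1/(\omega-1)+o(1)}$ then replaces $\psi(\alpha n)$ by $s$. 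The paper instead uses a Dirichlet approximation $\alpha=p/q+\theta$ and progressions mod $q$.

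The genuine gap is the minor arc bound, which is the core of the proof. The mechanism you sketch, a derivative test of order at most $d+1$ after passing to progressions, cannot work. Along $n=qm+r$, with $N|\theta|$ a negative power of $N$ (this is what makes $\psi(\alpha n)$ slowly varying in $m$), the phase $F(m)=\xi P(qm+r)\psi(\theta(qm+r)+rp/q)$ satisfies, on typical classes $r$ and for a small fixed $\sigma>0$,
$$N^{-\sigma}|\xi|q^j|\theta|^{j-d}\lesssim|F^{(j)}|\lesssim|\xi|q^j|\theta|^{j-d}\quad\text{for } j\ge d.$$
So each derivative beyond order $d$ gains only a factor $q|\theta|$. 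For badly approximable $\alpha$ one has $|\theta|\asymp q^{-2}$ with $q\gg N^{1/2}$, hence $|F^{(d+1)}|\approx|\xi|q^{d-1}$. This is not small already for $|\xi|\asymp1$. It is a positive power of $N$ if $d\ge2$, or if $|\xi|\ge N^{c}$, a range your floor removal forces you to cover (phases $(\xi+j)P(n)\psi(\alpha n)$ with $|j|$ up to a power of $N$). So no test of order at most $d+1$ saves anything, and optimizing $q$ inside a $(d+1)$-st derivative test does not produce the estimate.

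What is missing is the following. You must climb to derivatives of a large fixed order $d_0$, depending on $d$, $\omega$ and the exponent $\tau$ in $N|\theta|\lesssim N^{-\tau}$. Take $j_0=j_0(\xi,N)$ to be the first $j\ge d$ at which $|\xi|q^j|\theta|^{j-d}$ falls below $N^{-\epsilon}$. Then extract a saving from that single derivative on the whole range $(N/q)^{-j_0}N^{\epsilon}\lesssim|F^{(j_0)}|\lesssim N^{-\epsilon}$.
\begin{itemize}
\item A single-derivative van der Corput test never reaches below about $(N/q)^{-4}$. So for general $d$ this needs Vinogradov's method with the sharp mean value theorem (Wooley, Bourgain--Demeter--Guth). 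The Taylor remainder at order $d_0$ is negligible since $(N|\theta|)^{d_0-d}\lesssim N^{-(d+2)}$.
\item Minimality of $j_0$ gives $|F^{(j_0)}|\gtrsim N^{-O(\epsilon+\sigma)}q|\theta|\gtrsim N^{-O(\epsilon+\sigma)}q^{1-\omega'}$ for any fixed $\omega'>\omega$. In the worst case $j_0=d+1$, reaching the lower end of the range with $q\le N^{\rho}$ needs $\rho<\frac{d+1}{d+\omega'}$.
\item Requiring $N|\theta|\to0$ needs $\rho>1-\frac1{\omega'}$. Compatibility of the two constraints on $\rho$ is exactly where $\omega<1+\sqrt{d+1}$ comes from.
\item Because $j_0$ varies with $(\xi,N)$, nondegeneracy is needed for all $\psi^{(i)}$, $0\le i\le d_0$, simultaneously. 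The paper discards the classes $r$ with $|\psi^{(i)}(rp/q)|\lesssim N^{-\sigma}$ for some such $i$. It counts them using that $r\mapsto rp$ permutes $\mathbb{Z}/q\mathbb{Z}$, together with the sublevel-set lemma for functions without roots of infinite order.
\end{itemize}

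Three smaller points:
\begin{itemize}
\item For $p\neq2$ the main term needs a sampling principle transferring the $L^p(\mathbb{R})$ variational bound to $\ell^p(\mathbb{Z})$. Interpolation with trivial bounds only handles the error terms.
\item Rational $\alpha$, where your equidistribution step fails, must be treated separately via Bourgain's theorem along progressions.
\item Splitting by the sign of $\psi(\alpha n)$ is unnecessary.
\end{itemize}
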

Some remarks are in order.
\begin{enumerate}
\item[(i)] If $\alpha$ is a typical number in the sense of its Diophantine approximation, namely, such that $\omega(\alpha)=2$, then the condition  $\omega(\alpha)<1+\sqrt{\deg(P)+1}$ is satisfied for every non-constant polynomial $P$.
\item[(ii)] For rational $\alpha$ the condition again is trivially satisfied since $\omega(\alpha)=1$. In such a case, we note that the pointwise convergence of the ergodic averages modeled along  $\big(\big\lfloor P(n)\psi(\alpha n)\rfloor\big)_{n\in\mathbb{N}}$ can be easily deduced by passing to arithmetic progressions, exploiting the periodicity of $\psi$, and appealing to Bourgain's pointwise result for integer parts of polynomial orbits \cite{Bourgain}.
\item[(iii)] Theorem~$\ref{Main}$ yields examples of pointwise convergence for sequences of the form $\big(\big\lfloor P(n)\psi(\alpha n)\rfloor\big)_{n\in\mathbb{N}}$ for any irrational $\alpha$ with finite irrationality exponent, that is, for any non-Liouville irrational, provided that the polynomial is of sufficiently large degree, and more precisely, such that $\deg(P)>(\omega(\alpha)-1)^2-1$. For instance, even without the recent result from \cite{pirdm}, we may use the previous state-of-the-art upper bounds for the irrationality exponent of $\pi$, implying, in particular, that $\omega(\pi)<7.104$, see \cite{ZZ} for the more precise upper bound claimed therein, to obtain using the theorem above that $\big(\lfloor n^d\sin(n)\rfloor\big)_{n\in\mathbb{N}}$ is universally $L^p$-good for every $p>1$ and $d\ge 37$, since $1+\sqrt{d+1}\ge1+\sqrt{38}\ge 7.16>\omega(\pi)$. In the same spirit, using the fact that $\omega(\log 2)\le 3.575\le 1+\sqrt{6+1}$, see  \cite{marco}, our theorem covers, for example, $\big(\lfloor n^{d}\sin(n\pi \log 2)\rfloor\big)_{n\in\mathbb{N}}$ for every $d\ge 6$.
Analogous concrete orbits may be obtained whenever an upper bound for the irrationality exponent of $\alpha$ is available.\end{enumerate}

For the sake of completeness, let us give a short proof of the fact that this theorem readily implies Theorem~$\ref{realanalyticformulation}$.
\begin{proof}[Proof of Theorem~$\ref{realanalyticformulation}$ assuming Theorem~$\ref{Main}$]
If $\psi$ is such that none of its derivatives has a root of infinite order, we may directly apply Theorem~$\ref{Main}$ to deduce the $L^p$-goodness of $\big(\big\lfloor P(n)\psi(\alpha n)\rfloor\big)_{n\in\mathbb{N}} $. In the complementary case, namely, when $\psi$ is such that one of its derivatives does have a root of infinite order, by real analyticity, we get that this derivative must be  identically equal to $0$, and thus $\psi$ must be a polynomial. In turn, by periodicity, we get that $\psi$ is constant. If that constant is $0$, the result trivially holds, and, if not, the  $L^p$-goodness of the sequence follows from Bourgain's result, see \cite{Bourgain}.
\end{proof}

All our main theorems can be immediately derived from the following quantitative pointwise ergodic theorem.
\begin{theorem}[Quantitative pointwise ergodic theorem along oscillating sequences]\label{qMain}
Assume that $\psi\colon \mathbb{R}\to\mathbb{R}$ is a smooth $1$-periodic function such that none of its derivatives has a root of infinite order. Assume that $\alpha$ is an irrational number with irrationality exponent $\omega$, and $P\in\mathbb{R}[x]$ is a  polynomial of degree $d\ge 1$ such that
$\omega<1+\sqrt{d+1}$. Let $(X,\mathcal{B},\mu)$ be a $\sigma$-finite measure space equipped with an invertible measure-preserving transformation $T\colon X\to X$, and let for convenience
\begin{equation}\label{avoper}
\mathbf{A}_Nf(x)\coloneq \frac{1}{N}\sum_{n=1}^Nf\big(T^{\lfloor P(n)\psi(\alpha n)\rfloor}x\big)\text{.}
\end{equation}
Then for every $p\in(1,\infty)$ and every $f\in L^p_{\mu}(X)$ we have that
\begin{equation}\label{ptwisemine}
\mathbf{A}_Nf(x)\quad\text{converges as $N\to\infty$ for $\mu$-almost every $x\in X$.}
\end{equation}
Moreover,  for  every $p\in(1,\infty)$ and $r\in(2,\infty)$ there exists a constant $C=C(p,\alpha,\psi,P)$ such that 
\begin{equation}\label{vr}
\big\|V^r(\mathbf{A}_Nf:\,N\in\mathbb{N})\big\|_{L^p_{\mu}(X)}\le \frac{Cr}{r-2}\|f\|_{L^p_{\mu}(X)}\quad\text{for every $f\in L^p_{\mu}(X)$,}
\end{equation}
see \eqref{vrdef} for the definition of $r$-variations.
\end{theorem}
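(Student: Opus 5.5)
By the Calderón transference principle it suffices to prove \eqref{vr} on $\mathbb{Z}$ with the shift, i.e. for the convolution operators $A_N f(x)=\frac1N\sum_{n=1}^N f(x-\lfloor P(n)\psi(\alpha n)\rfloor)$ on $\ell^p(\mathbb{Z})$. The pointwise statement \eqref{ptwisemine} then follows from the finiteness of the $r$-variation on a dense class together with a standard density argument.

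The plan is to split $N$ into dyadic scales. Short variations are controlled by the usual Rademacher--Menshov/long-short decomposition, since $|A_N-A_{N'}|$ for $N\sim N'$ is handled via the square function over dyadic blocks. This reduces matters to the long variation along $N\in\mathbb{D}=\{2^k\}$ (or $\lambda^k$). The core is an analysis of the multiplier $m_N(\xi)=\frac1N\sum_{n\le N}e(\xi\lfloor P(n)\psi(\alpha n)\rfloor)$.

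First I would remove the floor function. Writing $e(\xi\lfloor y\rfloor)=e(\xi y)\,e(-\xi\{y\})$ and Fourier-expanding the sawtooth with a smooth truncation gives sums of the form $\frac1N\sum_n e((\xi+m)P(n)\psi(\alpha n))$. Next comes a \emph{minor arc} estimate: for $\xi$ away from $0$ (modulo $1$) at scale $N^{-d}$, I would use Weyl/van der Corput differencing. Since $\psi(\alpha n)$ is a phase depending on $\alpha n$ mod $1$, I would split $n$ into blocks of length $q$, where $q$ is a denominator of a convergent of $\alpha$ with $q\approx N^{\theta}$. On such a block $\alpha n$ moves by $\approx q\alpha$ mod $1$, which is tiny, so $\psi(\alpha n)$ is nearly locally smooth. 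Then $P(n)\psi(\alpha n)$ is approximately polynomial along each residue class mod $q$, and van der Corput applies. The irrationality exponent condition $\omega<1+\sqrt{d+1}$ should come exactly from balancing the block length (which needs $q$ large) against the error $|q\alpha-p|\ge q^{-\omega+1}$ (which needs the drift to be controlled). The non-vanishing-to-infinite-order hypothesis on $\psi$ enters through van der Corput with higher derivatives near critical points of $\psi$, giving a power saving $N^{-\delta}$ whenever $|\xi|\gg N^{-d+\epsilon}$.

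The \emph{major arc} estimate for $|\xi|\lesssim N^{-d+\epsilon}$ uses equidistribution of $\alpha n$ mod $1$ with a quantitative discrepancy (again from $\omega$) to decouple. I would show $m_N(\xi)=\int_0^1\frac1N\int_0^N e(\xi P(t)\psi(s))\,dt\,ds+O(N^{-\delta})$. The approximant is an average over $s$ of Radon-type multipliers with polynomial $\psi(s)P(t)$ whose coefficient varies with $s$. I would invoke coefficient-free variational estimates for continuous polynomial averages, whose bounds are uniform in the coefficients. Via Minkowski in $s$, these give $\ell^p$ $r$-variational bounds with constant $\lesssim r/(r-2)$ for the major-arc pieces, after the usual transfer from $\mathbb{R}$ to $\mathbb{Z}$, which is fine since only the arc near $0$ occurs. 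The error terms $N^{-\delta}$ are summable over dyadic $N$ in $\ell^2$, and interpolation with trivial $\ell^1,\ell^\infty$ bounds handles $\ell^p$.

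The main obstacle I expect is the minor arc exponential sum bound: making the block decomposition via convergents of $\alpha$ compatible with van der Corput for non-polynomial $\psi$ near its degenerate points, and extracting the precise threshold $1+\sqrt{d+1}$.
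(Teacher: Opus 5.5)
\textbf{There is a genuine gap, and it is in the minor arc bound.} That is the only place where the hypothesis $\omega<1+\sqrt{d+1}$ is used, and the tool you propose cannot reach that threshold.

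Write $\alpha=p/q+\theta$ with $q\le N^{\rho}$ and $N|\theta|\le N^{-\tau}$, and restrict to $n=qm+r$ with $m\sim L=N/q$. The phase $F(m)=\xi P(qm+r)\psi(\theta(qm+r)+rp/q)$ is then not close to a polynomial of bounded degree:
\begin{itemize}
\item on good classes, $|F^{(j)}|\approx\lambda_j:=|\xi|q^j|\theta|^{j-d}$ for $j\ge d$;
\item so each further derivative costs a factor $q|\theta|\le L^{-1}N^{-\tau}$, much faster than polynomial decay;
\item yet the Taylor terms $\lambda_jL^j\approx|\xi|N^d(N|\theta|)^{j-d}$ stay large for many $j>d$.
\end{itemize}
For such a phase, Weyl/van der Corput differencing ends in the $k$-th derivative test. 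That test saves only when $L^{-(4-2^{3-k})+\varepsilon}\le\lambda_k\le L^{-\varepsilon}$, so the admissible window never exceeds $L^{4}$.

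Now suppose $\lambda_d\approx L^{-\varepsilon}$ is just too large for the test. You must pass to $\lambda_{d+1}\approx q|\theta|\lambda_d$, and this must not already fall below $L^{-(4-2^{2-d})}$. Since $q|\theta|$ can be as small as $q^{1-\omega}\approx N^{-\rho(\omega-1)}$ while $L\approx N^{1-\rho}$, you need $\rho(\omega-1)<(4-2^{2-d})(1-\rho)$. Combined with $\rho>1-1/\omega$ (forced by $N|\theta|\le N^{-\tau}$), this yields only $(\omega-1)^2<4-2^{2-d}$. That matches the theorem for $d\le 2$. For every $d\ge3$ it is strictly weaker than $(\omega-1)^2<d+1$, and with this bookkeeping it never permits $\omega\ge3$. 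So the threshold does not come out of the balancing you describe.

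\textbf{What the paper does instead.} It uses Vinogradov's method (Proposition~\ref{genericvin}) together with the Vinogradov mean value theorem of Wooley and Bourgain--Demeter--Guth. The phase is Taylor-expanded to order $d_0-1$ on blocks of length $X\approx L^{1/2}N^{-\sigma/(2d_0)}$. Then a \emph{single} derivative $j$ only needs $\lambda_j\in[X^{-2j},1]\approx[L^{-j},1]$, up to factors $N^{O(\sigma)}$, and this window grows with $j$.

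The paper takes $j_0(\xi,N)$ to be the smallest $j\ge d$ with $|\xi||\theta|^{j-d}N^{\rho j}\le N^{-2\sigma}$. The worst transition is then $j_0=d+1$, which gives $\rho<(d+1)/(d+\omega')$. Together with $\rho>1-1/\omega'$, this is exactly $\omega<1+\sqrt{d+1}$.

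This argument needs two-sided bounds $N^{-\sigma}\lambda_j\lesssim|F^{(j)}|\lesssim\lambda_j$ for all $d\le j\le d_0$ simultaneously. That is where the hypothesis on $\psi$ enters, not through higher-derivative tests near critical points of $\psi$. Lemma~\ref{elementarybutdifficult}, together with the bijection $r\mapsto pr$ modulo $q$, shows that only $O(qN^{-\sigma'}+1)$ classes $r$ have some $|\psi^{(i)}(rp/q)|$, $0\le i\le d_0$, below $N^{-\sigma}$. Those classes are bounded trivially.

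\textbf{Two smaller remarks.}
\begin{itemize}
\item Your discrepancy-based major arc argument is a legitimate alternative to the paper's decoupling.
\item Your dyadic long/short split would additionally need $V^2$ short-variation bounds for the approximant, which the coefficient-free $V^r$ ($r>2$) estimates do not supply. The paper avoids this by working along $\lfloor 2^{k^{\tau}}\rfloor$ with small $\tau=\tau(p)$. There, $\|A_{N+1}-A_N\|_{\ell^p\to\ell^p}\lesssim N^{-1}$ alone controls the short variations.
\end{itemize}
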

By standard arguments, it can be easily seen that the quantitative $r$-variational estimate in \eqref{vr} readily implies the qualitative assertion of $\mu$-a.e. convergence in \eqref{ptwisemine}. Additionally, it suffices to establish the estimate \eqref{vr} for the integer shift system, namely, for $(\mathbb{Z},\mathcal{P}(\mathbb{Z}),|\cdot|,x\to x-1)$, where $|\cdot|$ denotes the counting measure, since by the Calder\'on transference principle once such an estimate is available over $\mathbb{Z}$, one may immediately obtain it for arbitrary $\sigma$-finite measure-preserving systems. The averaging operators \eqref{avoper} considered on the integer shift system become the following convolution operators
\[
A_Nf(x)\coloneq \frac{1}{N}\sum_{n=1}^Nf\big(x-\lfloor P(n)\psi(\alpha n)\rfloor\big)\text{,}
\]
and their corresponding multipliers are the following normalized exponential sums
\begin{equation}\label{expsumshere}
\mathfrak{m}_N(\xi)\coloneq\frac{1}{N}\sum_{n=1}^Ne\big(\xi\lfloor P(n)\psi(\alpha n)\rfloor\big)\text{,}\quad\text{with $e(x)\coloneq e^{2\pi i x}$ for every $x\in\mathbb{R}$,}
\end{equation}
that is, we have that $\mathcal{F}_{\mathbb{Z}}[A_{N}f](\xi)
=\mathfrak{m}_N(\xi)\cdot\mathcal{F}_{\mathbb{Z}}[f](\xi)$ for every $\xi\in\mathbb{T}\coloneq \mathbb{R}/\mathbb{Z}$, see subsection~$\ref{notation}$ for our conventions for the Fourier transform.

Understanding the behavior of these exponential sums plays a key role in establishing the quantitative estimates of the form \eqref{vr}. Our exponential sums \eqref{expsumshere} have only the trivial obstruction, namely, the only frequency $\xi$ not making $|\mathfrak{m}_N(\xi)|$  small is the zero frequency $\xi\equiv0\Mod{1}$. In such a case, one employs the so-called one-frequency circle method, partitioning the torus $\mathbb{T}\equiv[-1/2,1/2)$ to one major and one minor arc, the former being an interval centered at $0$, the natural width of which here is $\simeq N^{-d+\delta}$ for some $\delta>0$, and the latter being its complement. 

Loosely speaking, for the one-frequency circle method in the context of pointwise convergence problems, the analysis of the exponential sums \eqref{expsumshere} consists of the following two tasks.
\begin{enumerate}
\item[1.]\textbf{Minor arc analysis:}
Establishing that $|\mathfrak{m}_N(\xi)|$ is appropriately uniformly decaying with respect to $N$ when $\xi$ belongs in the minor arc.
\item[2.]\textbf{Major arc analysis:} Approximating $\mathfrak{m}_N(\xi)$ for $\xi$ in the major arc by a multiplier which induces an operator more amenable to techniques yielding variational estimates (usually relying on the continuous theory).
\end{enumerate}
A key intermediate result capturing both the major and minor arc behavior of the exponential sums (see~ \eqref{majarcestcomplete} and \eqref{minarcestcompletedif}, respectively) is the following exponential sum estimate, which is, in some sense, the first of its kind. We elaborate in the fourth paragraph below.
\begin{proposition}[Exponential sum approximation]\label{fam}
Let $\alpha$ be an irrational with irrationality exponent $\omega$ and let $P\in\mathbb{R}[x]$ be a polynomial of degree $d\ge 1$ such that
$\omega<1+\sqrt{d+1}$. Assume that $\psi\colon \mathbb{R}\to\mathbb{R}$ is a smooth $1$-periodic function such that none of its derivatives has a root of infinite order. Then there exist positive constants $C=C(\alpha,P,\psi)$ and $\chi=\chi(\alpha,P,\psi)$ such that for every $N\in\mathbb{N}$ we have
\begin{equation}\label{fullestcomplete}
\sup_{\xi\in[-1/2,1/2)}\bigg|\frac{1}{N}\sum_{n=1}^Ne\big(\xi\lfloor P(n)\psi(\alpha n)\rfloor\big)-\int_0^1\bigg(\frac{1}{N}\int_0^Ne\big(\xi P(u)\psi(s)\big)du\bigg)ds\bigg|\le C N^{-\chi}\text{.}
\end{equation}
More precisely, there exists a positive constant $\delta=\delta(\omega)$ such that
\begin{equation}\label{majarcestcomplete}
\sup_{|\xi|\le N^{-d+\delta}}\bigg|\frac{1}{N}\sum_{n=1}^Ne\big(\xi\lfloor P(n)\psi(\alpha n)\rfloor\big)-\int_0^1\bigg(\frac{1}{N}\int_0^Ne\big(\xi P(u)\psi(s)\big)du\bigg)ds\bigg|\le C N^{-\chi}\text{,}
\end{equation}
\begin{equation}\label{minarcestcompletedif}
\sup_{N^{-d+\delta}<|\xi|\le1/2}\bigg|\frac{1}{N}\sum_{n=1}^Ne\big(\xi\lfloor P(n)\psi(\alpha n)\rfloor\big)\bigg|\le \frac{C}{2}N^{-\chi}
\end{equation}
and
\begin{equation}\label{minarcestcompleteeasy}
\sup_{N^{-d+\delta}<|\xi|\le1/2}\bigg|\int_0^1\bigg(\frac{1}{N}\int_0^Ne\big(\xi P(u)\psi(s)\big)du\bigg)ds\bigg|\le \frac{C}{2} N^{-\chi}\text{.}
\end{equation}
\end{proposition}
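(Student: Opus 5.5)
Since \eqref{fullestcomplete} follows from \eqref{majarcestcomplete}, \eqref{minarcestcompletedif} and \eqref{minarcestcompleteeasy}, I will prove the three estimates separately. Throughout, $\delta$ will be small relative to the exponents coming from $\omega$ and $d$.

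\emph{Estimate \eqref{minarcestcompleteeasy}.} I fix $s$ and look at the inner integral $\frac1N\int_0^N e(\xi P(u)\psi(s))\,du$.
\begin{itemize}
\item By van der Corput's lemma for the $d$-th derivative, this integral is $\lesssim \min\big(1,(|\xi\psi(s)|N^d)^{-1/d}\big)$.
\item Because no derivative of $\psi$ vanishes to infinite order, compactness gives a finite $k$ with $|\{s\in[0,1):|\psi(s)|\le \lambda\}|\lesssim \lambda^{1/k}$.
\item I split into $|\psi(s)|\le N^{-\delta/2}$ and its complement. On the complement, $|\xi\psi(s)|N^d\ge N^{\delta/2}$ when $|\xi|>N^{-d+\delta}$, so the integral is at most $N^{-\delta/(2d)}$.
\end{itemize}
This gives decay $N^{-\chi}$.

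\emph{Major arc \eqref{majarcestcomplete}.} For $|\xi|\le N^{-d+\delta}$ I proceed in three steps.
\begin{itemize}
\item I replace $\lfloor P(n)\psi(\alpha n)\rfloor$ by $P(n)\psi(\alpha n)$. I write $e(\xi\lfloor y\rfloor)=e(\xi y)e(-\xi\{y\})$ and expand $e(-\xi\{y\})$ in a Fourier series in $\{y\}$, truncated at $M=N^{\epsilon}$ with the usual smoothing near integers. The $m=0$ term is $\simeq e(\xi y)$ with error $O(|\xi|)$. The terms $m\ne0$ are sums $\frac1N\sum e((m+\xi)P(n)\psi(\alpha n))$ with $|m+\xi|\ge 1/2$, so they are minor-arc type sums and are handled as below.
\item I cut $[1,N]$ into blocks of length $L=N^{1-\kappa}$. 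On a block starting at $n_0$, $\xi P(n)$ varies by only $O(N^{\delta-\kappa})$ on the block.
\item I therefore replace $e(\xi P(n)\psi(\alpha n))$ by $e(\xi P(n_0)\psi(\alpha n))$ on that block. By equidistribution of $(\alpha n)$ modulo $1$, the block average becomes $\int_0^1 e(\xi P(n_0)\psi(s))\,ds$. The error comes from Erd\H{o}s--Tur\'an/Koksma, with discrepancy controlled by $\omega$: it is $L^{-1/(\omega-1)+\epsilon}$ times the variation of $s\mapsto e(\xi P(n_0)\psi(s))$, which is $\lesssim 1+|\xi|N^d\le N^{\delta}$.
\end{itemize}
Summing over blocks and undoing the freezing of $P$ gives the double integral. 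This works provided $\delta<(1-\kappa)/(\omega-1)$ and $\delta<\kappa$, which is feasible.

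\emph{Minor arc \eqref{minarcestcompletedif}: the main obstacle.} The range $N^{-d+\delta}<|\xi|\le 1/2$, together with the shifted frequencies $m+\xi$ from the first step, is the hard part, and here the condition $\omega<1+\sqrt{d+1}$ must enter.
\begin{itemize}
\item I again use short blocks, now of length $H$, and Taylor expand $\alpha n$ around $n_0$.
\item On each block the phase $\xi P(n)\psi(\alpha n)$ is well approximated by $\xi P(n_0+h)\psi(\alpha n_0+\alpha h)$.
\item For each $n_0$ I expand $\psi(\alpha n_0+\alpha h)$ in Fourier modes $e(j\alpha h)$. This gives sums $\sum_h e(\xi P(n_0+h)c_j+j\alpha h)$, which are polynomial phases in $h$ whose coefficients mix $\xi$ with $\alpha$.
\item I apply Weyl's inequality, or Vinogradov-type mean value bounds, in $h$. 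Nontrivial decay is needed unless all coefficients are close to rationals with small denominators.
\end{itemize}

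In that exceptional case, rational approximation of the linear coefficient $\xi P'(n_0)\psi(\alpha n_0)+\ldots$ for many different $n_0$ would force $\alpha$ to have good rational approximations, or $\xi$ to be essentially in the major arc. I will quantify this with the irrationality exponent, through a bound $\|q\alpha\|\ge q^{-\omega+1+\epsilon}$. Balancing $H$ against $N$, the Weyl saving ($\sim H^{-1/2^{d}}$, or better via Vinogradov), and the Diophantine loss should produce exactly the constraint $(\omega-1)^2<d+1$.

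I expect optimizing this balance, and in particular handling $n_0$ for which $\psi(\alpha n_0)$ or its derivatives are small, to be the main difficulty. For the latter I again use the sublevel-set bound in $s$, transferred to $n_0$ via equidistribution, to discard a small exceptional set of $n_0$.
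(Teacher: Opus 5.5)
Your argument for \eqref{minarcestcompleteeasy} is the paper's: sublevel sets of $\psi$ plus van der Corput in $u$. Your major arc argument is a valid alternative route. Freezing $P$ on blocks of length $L=N^{1-1/\omega}$ and applying Koksma's inequality with the discrepancy bound $L^{-1/(\omega-1)+\epsilon}$ for segments of $(\alpha n)$ gives \eqref{majarcestcomplete} for every $\delta<1/\omega$. The paper instead uses a single Dirichlet approximation $\alpha=p/q+\theta$ followed by Riemann sums over residues mod $q$. Your floor removal there also need not go through minor-arc sums at all: $|e(\xi\lfloor y\rfloor)-e(\xi y)|\le 2\pi|\xi|$ already gives a total error $N|\xi|\le N^{1-d+\delta}$.

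\textbf{The gap is in \eqref{minarcestcompletedif}.}
\begin{itemize}
\item The step ``expand $\psi(\alpha n_0+\alpha h)$ in Fourier modes, obtaining sums $\sum_h e(\xi P(n_0+h)c_j+j\alpha h)$'' is false. The expression $e\big(\xi P\cdot\sum_j c_j e(j\alpha h)\big)$ is not $\sum_j e(\xi Pc_j+j\alpha h)$. Since $|\xi P(n)|\gtrsim N^{\delta}$ on the minor arc, $e(\xi P\psi)$ cannot be linearized in $\psi$.
\item Expanding $s\mapsto e(t\psi(s))$ instead does not rescue this. It yields coefficients $\int_0^1e(t\psi(s)-js)\,ds$ that depend on $h$ through $t=\xi P(n_0+h)$, with $\gtrsim|t|$ relevant modes. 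These are not polynomial phases.
\item Taylor expansion in $h$ is also unavailable, because $\alpha h$ sweeps about $|\alpha|H$ periods of $\psi$ on a block.
\end{itemize}
So there is no polynomial phase for Weyl or Vinogradov to act on. The claim that balancing produces $(\omega-1)^2<d+1$ is not supported by any computation.

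\textbf{The missing idea} is to make $\psi(\alpha n)$ slowly varying by passing to progressions modulo $q$, where $\alpha=p/q+\theta$ with $q\le N^{\rho}$. Then $\psi(\alpha(qm+r))=\psi(\theta(qm+r)+rp/q)$. The irrationality exponent forces $q\gtrsim N^{\rho/(\omega'-1)}$, hence $N|\theta|\lesssim N^{-\tau}$, with $\tau>0$ exactly when $\rho>1-1/\omega'$.

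Next, keep the residues $r$ with $|\psi^{(i)}(\theta(qx+r)+rp/q)|\gtrsim N^{-\sigma}$ for all $i\le d_0$. The others are few, since $r\mapsto pr$ permutes $\mathbb{Z}/q\mathbb{Z}$ and the sublevel sets of $\psi^{(i)}$ are short intervals. This is where your idea of discarding points with small $\psi^{(i)}$ genuinely enters. For the kept residues, the phase $F(x)=\xi P(qx+r)\psi(\theta(qx+r)+rp/q)$ satisfies
$N^{-\sigma}|\xi|q^j|\theta|^{j-d}\lesssim|F^{(j)}(x)|\lesssim|\xi|q^j|\theta|^{j-d}$ for $d\le j\le d_0$.

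One then runs Vinogradov's method in $m$ with $I=\{j_0\}$, where $j_0$ is the least $j\ge d$ with $|\xi||\theta|^{j-d}N^{\rho j}\le N^{-2\sigma}$. Using $|\theta|\gtrsim N^{-\rho\omega'}$, the case $j_0>d$ closes precisely when $\rho(d+\omega')<d+1$. A $\rho$ with $1-1/\omega<\rho<(d+1)/(d+\omega)$ exists if and only if $\omega<1+\sqrt{d+1}$. That is the true origin of the hypothesis.
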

The estimate  \eqref{minarcestcompletedif} readily establishes that
\begin{equation}\label{easyL2}
\lim_{N\to\infty}\mathfrak{m}_N(\xi)= \left\{
\begin{array}{ll}     
0&
     \text{if }\xi\not\equiv 0\Mod{1}\\
      1
&\text{if }\xi\equiv 0\Mod{1}\text{,}\\
\end{array} 
\right.
\end{equation}
which, in turn, immediately implies that for every probability space $(X,\mathcal{B},\mu)$ equipped with an invertible measure-preserving transformation $T\colon X\to X$, the $L^2$-limit of $\mathbf{A}_Nf$ exists for every $f\in L^2_{\mu}(X)$, and coincides with the one of the conventional ergodic averages \eqref{Birconv}. The argument is standard and relies on Herglotz theorem followed by an application of the dominated convergence theorem using \eqref{easyL2}. In fact, the qualitative assertion \eqref{easyL2} which establishes the existence of the $L^2$-limit can be deduced from Theorem~4.1 and Theorem~4.2 in \cite{Trilogy1}  for a wider class of orbits than the one treated here. For instance, the order of differentiability of $\psi$ required is essentially $2d+1$, and one can allow any  non-Liouville irrational $\alpha$. In the context of $\mu$-a.e. convergence, estimates of the form \eqref{easyL2} are insufficient, and we require the much more quantitative and precise information about the behavior of the exponential sums Proposition~$\ref{fam}$ offers. 

Let us make some brief comments on the four estimates of the aforementioned proposition. Firstly, the last three estimates clearly imply the first. Secondly, each comes with its own hypotheses on $\alpha,P,\psi$, reflecting different sets of challenges on which we elaborate in the following strategy subsection. In Proposition~$\ref{fam}$ we have simply taken the intersection of all restrictions on $\alpha,P,\psi$ allowing all three estimates to be valid. 

Before discussing the strategy of our proof, we wish to comment on the major arc estimate \eqref{majarcestcomplete} since a novel phenomenon emerges. The exponential sums \eqref{expsumshere} have only the trivial obstruction, that is, \eqref{easyL2} holds. Usually, in such cases, the major arc analysis becomes straightforward, and the correct approximant of the normalized exponential sums is their direct continuous counterpart, i.e.:
\[
\frac{1}{N}\int_0^Ne(\xi P(u)\psi(\alpha u)\big)du\text{.}
\] 
This is not the case here. A decoupling effect takes place, and our approximant is 
\begin{equation}\label{ourapproximantnovel}
\int_0^1\bigg(\frac{1}{N}\int_0^Ne\big(\xi P(u)\psi(s)\big)du\bigg)ds\text{.}
\end{equation}
As alluded to earlier, approximations of the form \eqref{fullestcomplete} in the context of pointwise ergodic theorems allow one to deduce quantitative estimates of the form \eqref{vr} for the integer shift system from the analogous ones for the operators induced by the approximants. The utility of this reduction lies in the fact that the approximant multipliers are chosen so that their corresponding operators are more amenable to techniques yielding the aforementioned quantitative estimates, typically relying on the continuous theory.

The $\psi$-oscillation in our phase makes the standard major arc analysis through elementary comparisons of the exponential sums with their direct continuous counterpart (see, for example, page 6 and subsection~5.1 in \cite{DMW}) not possible here. Although not as explicit as in the aforementioned work, this type of elementary major arc approximations are implicit in seemingly every nonconventional pointwise ergodic theorem with corresponding exponential sums having only the trivial obstruction, see \cite{ncfull,mirek1,LDWT11,BY,frprimes}.

Nevertheless, our approximant multiplier in \eqref{ourapproximantnovel} is, indeed, amenable to techniques from the continuous theory, eventually yielding the desired variational estimates, through a different mechanism. The approximant multipliers \eqref{ourapproximantnovel} give rise to the following operators over $\mathbb{R}$
\begin{equation}\label{BNdefnew}
B_{N}f(x)\coloneq\int_{0}^1\Big(\frac{1}{N}\int_{0}^Nf\big(x-\psi(s)P(u)\big)du\Big)ds\text{,}
\end{equation}
which may be viewed as a superposition, or an average, of Radon-type operators.  Using the fact that the $r$-variations associated with averaging over a polynomial of degree $d$ is bounded on $L^p(\mathbb{R})$, $p\in(1,\infty]$ with a bound \textit{depending only on $p$ and $d$, but not on the coefficients of the polynomial}, one can  straightforwardly establish the variational estimates on $L^p(\mathbb{R})$ for the operators $(B_N)_{N\in\mathbb{N}}$. In fact, the argument to complete the aforementioned task is almost as simple as establishing the estimate $\|B_Nf\|_{L^p(\mathbb{R})}\le \|f\|_{L^p(\mathbb{R})}$, see the proof of Proposition~$\ref{contvarprop}$. These continuous estimates are a key input of the method, and together with the approximations in Proposition~$\ref{fam}$ allow us to ultimately conclude the proof of Theorem~$\ref{qMain}$.

Such coefficient-free $L^p$-estimates are established via a lifting technique, reminiscent of the argument yielding the Calder\'on transference principle. They were firstly developed in \cite{MT}, where higher dimensional analogues of Bourgain's pointwise result for polynomial orbits are established, and notably, are an important ingredient to the recent resolution of the multi-parameter variant
of the Bellow--Furstenberg problem \cite{BJMS}.  Although originally developed for quite different purposes, these estimates prove to be here precisely the necessary input for our variational estimates.    

\subsection{Strategy}\label{strategysubsection} The proof is naturally split into several tasks on which we briefly comment below.

\subsubsection{Major arc analysis}\label{majsubsubsec} The content of the major arc analysis, see Proposition~$\ref{Majarcprop}$, is establishing an estimate of the form \eqref{majarcestcomplete}. In this context, $\psi$ is allowed to be any $1$-periodic Lipschitz function. The proof is completed in several steps, and we give a quick and informal sketch here. Any passage described below should be understood as an appropriate estimate of the difference of the two expressions.
\begin{enumerate}
\item[(i)] Standard manipulations allow the floor removal here, and we make the first passage to a more amenable expression:
\[
\sum_{n\le N}e(\xi\lfloor P(n)\psi(\alpha n)\rfloor)\to\sum_{n\le N}e(\xi P(n)\psi(\alpha n))\eqcolon S_1\text{.}
\]
\item[(ii)] We perform an appropriate Diophantine approximation of $\alpha=p/q+\theta$, different for each $N$, and pass to arithmetic progressions
\[
S_1=\sum_{k=0}^{q-1}\sum_{\frac{1-k}{q}\le m\le\frac{N-k}{q}}e\big(\xi P(qm+k)\psi\big(\theta(qm+k)+kp/q\big)\big)\eqcolon S_2\text{,}
\]
where periodicity is exploited as follows:
\[
\psi(\alpha(qm+k))=\psi\big((\theta+p/q)(qm+k)\big)=\psi\big(\theta(qm+k)+kp/q\big)\text{.}
\] 
\item[(iii)] We will ensure that the Diophantine approximation is such that $N|\theta|\lesssim N^{-\tau}$ for some $\tau>0$ and thus using the fact that $|\theta(qm+k)|\lesssim |\theta| N\lesssim N^{-\tau}$ in the argument of the Lipschitz function $\psi$ above, we may remove the dependency on $m$ of the $\psi$-oscillation in the phase:
\[
S_2\to \sum_{k=0}^{q-1}\sum_{\frac{1-k}{q}\le m\le\frac{N-k}{q}}e\big(\xi P(qm+k)\psi(kp/q)\big)\eqcolon S_3
\]
\item[(iv)] The final two steps consist of approximating the two sums above by their continuous counterparts:
\[
S_3\to\frac{1}{q}\sum_{r=0}^{q-1}\int_{0}^Ne\big(\xi P(t)\psi(r/q)\big)dt\to\int_0^1\int_0^Ne\big(\xi P(u)\psi(s)\big)duds\text{.}
\]
\end{enumerate}
Arguably, the most crucial step is the second one, and it is inspired by the first step in the minor arc analysis (which, in turn, draws from ideas from \cite{Trilogy1}, see the following subsection). 

While the series of approximation arguments in the major arc analysis is reminiscent of the analogous one in the Hardy--Littlewood circle method for analyzing exponential sums with polynomial phases, their aim is entirely different. The only obstructing frequency for our exponential sums is the trivial one, and our maneuvers approximate $\alpha$ and not $\xi$ by rationals, serving the purpose of ``decoupling'' the oscillating part of the phase from the polynomial one. 
\subsubsection{Minor arc analysis}\label{minsubsubsec}
The minor arc analysis consists of establishing the estimates \eqref{minarcestcompletedif} and \eqref{minarcestcompleteeasy}. The latter is substantially easier and can be obtained essentially using van der Corput, see Proposition~$\ref{psireg}$. The most crucial step towards the former is establishing an estimate of the form
\begin{equation}\label{estminmain}
\Big|\sum_{N<n\le 2N}e(\xi P(n)\psi\big(\alpha n)\big)\Big|\lesssim N^{1-\chi}\text{,}\quad\text{for some $\chi>0$,}
\end{equation}
and for every $\xi,N$ with $N^{-d+\delta}\le|\xi|\le N$. After such an estimate is obtained, one may readily establish its non-dyadic variant and use it to obtain its ``floored'' counterpart, see Proposition~$\ref{minfull}$ and Proposition~$\ref{floorremovgeneric}$, respectively. Such arguments require some care, but are, nevertheless, standard and generic, in the sense that they have appeared in the literature and do not use the specific form of the phases, so let us focus the rest of the discussion on the estimate \eqref{estminmain}.

Our departure point is the beginning of the proof of Theorem~4.1 in \cite{Trilogy1}, see Case~1, page 172 therein, where oscillating sequences of similar type were considered in the context of equidistribution and discrepancy problems$\footnote{Although we only draw ideas from \cite{Trilogy1} it is worth mentioning that this work is the first part of a ``trilogy'' of papers \cite{Trilogy1,T2,T3} studying the distribution of wide classes of oscillating sequences modulo $1$.}$. We follow their first step in our exponential sum estimation, which is the same as the second step described in the major arc regime above, and consists of performing an appropriate Diophantine approximation of $\alpha=p/q+\theta$, different at each scale $N$, and pass to arithmetic progressions. We will again ensure that the error of the approximation is such that $|\theta|\lesssim N^{-(1+\tau)}$ for some $\tau>0$, and a passage to arithmetic progressions of common difference $q$ will allow us to estimate as follows
\begin{equation}\label{firstminslpit}
\begin{split}
\Big|\sum_{N<n\le 2N}e(\xi P(n)\psi\big(\alpha n)\big)\Big|\lesssim\sum_{r=0}^{q-1}\bigg|\sum_{\frac{N}{q}\le m\le \frac{2N}{q}}e\Big(\xi P(qm+r)\psi\big(\theta(qm+r)+rp/q\big)\Big)\bigg|+N^{-\chi}
\end{split}
\end{equation}
for some $\chi>0$. Although we aim to avoid making precise comments on the numerology of the various parameters in our proof in this informal discussion,  it is worth mentioning the following, hopefully providing the reader with a rough idea on how the notion of the irrationality exponent naturally emerges here: For every $N\in\mathbb{N}$, we perform a Diophantine approximation of $\alpha$ with $q=q(N)$ such that $q\le N^{\rho}$, for some carefully chosen $\rho\in(0,1)$. The upper bounds on the irrationality exponent of $\alpha$ immediately lead to lower bounds for $q$ and $|\theta|$, affording us more precise control of these quantities essential in the sequel.

The strategy after reaching \eqref{firstminslpit} departs from \cite{Trilogy1}, and instead of relying on refinements of van der Corput-type estimates we employ Vinogradov's method of estimating exponential sums. Specifically, we analyze the derivatives of the phase with respect to $m$, and letting
\[
F(x)=F(x;N,\xi,r)\coloneq \xi P(qx+r)\psi\big(\theta(qx+r)+rp/q\big)\text{,}
\]
we, loosely speaking, show that for ``most'' $r\in\{0,\dotsc,q-1\}$, and every $x\simeq N/q$ and $d\le j\lesssim1$ we have
\begin{equation}\label{derF}
N^{-\sigma}\cdot \big(|\xi|q^j |\theta|^{j-d}\big)\lesssim |F^{(j)}(x)|\lesssim |\xi|q^j |\theta|^{j-d}\text{,}\quad\text{for some $\sigma>0$.}
\end{equation}
Such an estimate is achieved by exploiting, among other things, the fact that no derivative of $\psi$ can have a root of infinite order, see Lemma~$\ref{elementarybutdifficult}$. 

Using the trivial estimate for atypical residue classes in \eqref{firstminslpit}, the task is reduced to estimating 
\[
\bigg|\sum_{\frac{N}{q}\le m\le \frac{2N}{q}}e\Big(\xi P(qm+r)\psi\big(\theta(qm+r)+rp/q\big)\Big)\bigg|
\]
for $r$'s such that \eqref{derF} holds, and for precisely this purpose we employ Vinogradov's method. Although $F$ does  not have derivatives decaying as if it were a polynomial, which is the setting for which Vinogradov's method is usually adapted to, the estimate \eqref{derF} suggests (ignoring the  error factor $N^{-\sigma}$) that after taking more than $d$ derivatives, the decay of the size for an extra derivation corresponds to an additional multiplicative factor $q|\theta|$ (instead of $(N/q)^{-1}$, which would correspond to polynomial behavior). Since $|\theta|\le N^{-1}$, this means that the derivatives decay faster than the polynomial heuristic. Nevertheless, using the framework of Vinogradov's method for estimating exponential sums in a very general form, see  Proposition~$\ref{genericvin}$, the state-of-the-art Vinogradov Mean Value Theorem bounds of
Wooley \cite{W} and Bourgain-Demeter-Guth \cite{BDG}, and the derivative control for one precisely chosen derivative depending on $N$ and $\xi$, we are able to conclude. Finally, let us mention that although discarding information about every derivative except for one$\footnote{Given $\xi,N$ we do use the control \eqref{derF} affords us for one precisely chosen $j_0=j_0(\xi,N)$ when using the Vinogradov method, see Step~5 in the proof of Proposition~$\ref{minarcprop}$. But our choice $j_0=j_0(\xi,N)$ takes multiple different values as $\xi,N$ range in the minor arc regime $N^{-d+\delta}\le|\xi|\le N$, so our argument ultimately utilizes information about a large number of derivatives.}$ goes against the spirit of Vinogradov's method, this is not the first successful employment of a maneuver of this kind, see for example \cite{hb}.
\subsubsection{Variational estimates}
Having the approximations of Proposition~$\ref{fam}$ in our disposal, the key additional input for establishing the quantitative estimate of Theorem~$\ref{qMain}$ is the following variational estimate on $\mathbb{R}$ for the continuous analogue of the approximant operator
\begin{equation}\label{rvarnewnew}
\big\|V^r(B_{N}f:\,N\in\mathbb{N})\big\|_{L^p(\mathbb{R})}\lesssim_{p,d}\frac{r}{r-2}\|f\|_{L^p(\mathbb{R})}\text{,}\quad\text{where the operators $B_N$ are defined \eqref{BNdefnew}.}
\end{equation}
We have already commented on the proof of this assertion as a direct corollary of the coefficient-free variational $L^p(\mathbb{R})$-bounds for averaging operators of Radon-type, see the two paragraphs preceding subsection~$\ref{strategysubsection}$.
 
After such an estimate is obtained, one may establish a suitable variant on $\ell^p(\mathbb{Z})$ using a standard sampling principle, see Section~3.3 in \cite{marvar}, and Section~2 in \cite{TransferenceLp}. Finally, the appropriate discrete counterpart of \eqref{rvarnewnew} together with our approximations in Proposition~$\ref{fam}$ ultimately allow us to establish \eqref{vr} for the integer shift system, which by the Calder\'on transference principle gives the variational estimate for the arbitrary $\sigma$-finite measure-preserving system. 
\\

We have organized the paper as follows. In Section~$\ref{majsec}$ and Section~$\ref{minsec}$ we perform the major and minor arc analysis, respectively, the latter of which concludes with a quick proof of our exponential sum approximation result, namely, Proposition~$\ref{fam}$. In Section~$\ref{contandconclude}$ we establish variational  estimates in the continuous setting for our approximant in the aforementioned proposition, see Proposition~$\ref{contvarprop}$, and in the final section we combine everything to conclude the proof of Theorem~$\ref{qMain}$, which as we saw readily implies all our main theorems.
\subsection{Notation}\label{notation}
For any $x\in\mathbb{R}$ we use the standard notation
\[
\lfloor x\rfloor=\max\{n\in\mathbb{Z}:\,n\le x\}\text{,}\quad\{x\}=x-\lfloor x\rfloor\text{,} \quad\|x\|=\min\{|x-n|:\,n\in\mathbb{Z}\}\text{.}
\]
Through nearly the entirety of the paper we identify $\mathbb{T}\coloneq\mathbb{R}/\mathbb{Z}$ with $[-1/2,1/2)$ so that for every $\xi\in\mathbb{T}\equiv[-1/2,1/2)$ we have $\|\xi\|=|\xi|$. For every $t\in[1,\infty)$ we let $[t]\coloneqq\{1,2,\dotsc,\lfloor t\rfloor\}$.  If $A,B$ are two non-negative quantities, we write $A\lesssim B$ or $B \gtrsim A$  to denote the existence of a positive constant $C$ such that $A\le C B$. When $A\lesssim B$ and $A \gtrsim B$ we write $A\simeq B$. Whenever we wish to highlight the dependence of the implicit constant $C$ on some parameters $\alpha_1,\dotsc,\alpha_{k}$, we write $A\lesssim_{\alpha_1,\dotsc,\alpha_{k}} B$. We let $\lambda$ and $\lambda_{\mathbb{T}}$ denote the Lebesgue measure on $\mathbb{R}$ and $\mathbb{T}$, respectively.

For any $f\in\ell^1(\mathbb{Z})$ we define its Fourier transform by
\[
\mathcal{F}_{\mathbb{Z}}f(\xi)\coloneq\sum_{n\in\mathbb{Z}}f(n)e(\xi n)\text{,}\quad\xi\in\mathbb{T}\text{,}
\] 
and for any $g\in L^1(\mathbb{T})$ we define its inverse Fourier transform by
\[
\mathcal{F}_{\mathbb{Z}}^{-1}g(x)\coloneq\int_{\mathbb{T}}g(\xi)e(-\xi x) d \xi\text{,}\quad x\in\mathbb{Z}\text{.} 
\]
For any $h\in L^1(\mathbb{R})$ we define the Fourier transform and the inverse Fourier transform of $h$ by
\[
\mathcal{F}_{\mathbb{R}}[h](\xi)\coloneq\int_{\mathbb{R}}f(x)e(\xi x)d x
\quad\text{and}\quad\mathcal{F}^{-1}_{\mathbb{R}}[h](x)\coloneq\int_{\mathbb{R}}f(\xi)e(-\xi x)d\xi\text{,}\quad\text{ respectively.}
\]
For any $\mathbb I\subseteq \mathbb R$, any family
$(a_t)_{t\in\mathbb I}=(a_t: t\in\mathbb I)\subseteq \mathbb C$,
and any exponent $r \in [1, \infty)$, we define the $r$-variation seminorm of the family by
\begin{align}
\label{vrdef}
V^{r}(a_t: t\in\mathbb I) \coloneqq 
\sup_{J\in\mathbb{N}} \sup_{\substack{t_{0}<\dotsb<t_{J}\\ t_{j}\in\mathbb I}}
\bigg(\sum_{j=1}^J  |a_{t_{j}}-a_{t_{j-1}}|^{r} \bigg)^{1/r},
\end{align}
where the supremum is taken over all finite increasing sequences in
$\mathbb I$, and is set by convention to equal zero if
$|\mathbb I| \leq 1$.

\section*{Acknowledgments}
The author would like to thank Andreas Koutsogiannis and Konstantinos Tsinas for suggesting the problem, Borys Kuca for insightful conversations which led to several improvements of the exposition, as well as B{\l}a{\.z}ej Wr{\'o}bel and Nikos Frantzikinakis for their constant support and encouragement. I would also like to thank Mariusz Mirek for his guidance and numerous invaluable discussions throughout the years on related problems.
\section{Major arc estimates}\label{majsec}
In this section we perform the major arc analysis, namely, we establish an estimate of the form \ref{majarcestcomplete}. More precisely, we prove the following.
\begin{proposition}[Major arc estimates]\label{Majarcprop}
Let $\alpha\in\mathbb{R}$ be such that $\omega(\alpha)\eqcolon\omega\in[2,\infty)$, let $P\in\mathbb{R}[x]$ be  of degree $d\ge 1$, and let $\psi\colon\mathbb{R}\to\mathbb{R}$ be a $1$-periodic Lipschitz function. Then for every $\delta\in\big(0,\frac{1}{2\omega-1}\big)$ there exist positive constants $C=C(\alpha,\delta,P)$ and $\chi=\chi(\delta,\omega)$ such that for every $N\in\mathbb{N}$ and $\xi\in\mathbb{R}$ with $|\xi|\le N^{-d+\delta}$ we have
\begin{equation}\label{superpositionmajarc}
\bigg|\frac{1}{N}\sum_{n=1}^Ne\big(\xi \lfloor P(n)\psi(\alpha n)\rfloor\big)-\int_0^1\bigg(\frac{1}{N}\int_0^Ne\big(\xi P(t)\psi( s)\big)dt\bigg)ds\bigg|\le C \big(\|\psi\|_{\Lip}+1\big)N^{-\chi}\text{,}
\end{equation}
where 
\[
\|\psi\|_{\Lip}\coloneq \sup_{x\in\mathbb{R}}|\psi(x)|+\sup_{\substack{x\neq y\\x,y\in\mathbb{R}}}\frac{|\psi(y)-\psi(x)|}{|x-y|}\text{.} 
\] 
\end{proposition}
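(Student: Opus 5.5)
The plan is to carry out the chain of approximations (i)--(iv) from subsection~\ref{majsubsubsec}, tuning one parameter $\rho\in(0,1)$ so that every error is $O\big((\|\psi\|_{\Lip}+1)N^{-\chi}\big)$. Throughout, $N$ may be taken large depending on $\alpha,\delta,P$, since small $N$ are absorbed into $C$.

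\emph{Floor removal, step (i).} Since $|e(\xi\lfloor x\rfloor)-e(\xi x)|\le 2\pi|\xi|\le 2\pi N^{-d+\delta}$ and $d\ge1>\delta$, replacing $\lfloor P(n)\psi(\alpha n)\rfloor$ by $P(n)\psi(\alpha n)$ costs $O(N^{-(1-\delta)})$ after normalisation.

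\emph{Diophantine approximation and passage to progressions, steps (ii)--(iii).} Fix a small $\varepsilon>0$. Since $\omega(\alpha)=\omega$, for all sufficiently large $q$ and all $p$ we have $|\alpha-p/q|\ge q^{-\omega-\varepsilon}$. Set $Q=N^{\rho}$. By Dirichlet's theorem there are coprime $p$ and $1\le q\le Q$ with $\alpha=p/q+\theta$ and $|\theta|\le 1/(qQ)$. Comparing the two bounds gives $q^{\omega-1+\varepsilon}\ge Q$, that is, $q\ge N^{\rho/(\omega-1+\varepsilon)}$; in particular $q$ is large once $N$ is, so the lower bound applies.

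Split $n=qm+k$ and use periodicity to write $\psi(\alpha n)=\psi(\theta n+kp/q)$. Then replace $\psi(\theta n+kp/q)$ by $\psi(kp/q)$. For $n\le N$ and $\deg P=d$, the phase changes by at most
\[
|\xi|\,|P(n)|\,\|\psi\|_{\Lip}|\theta| N\lesssim_P N^{\delta}\|\psi\|_{\Lip}\,\frac{N}{qQ}\le \|\psi\|_{\Lip}N^{1+\delta-\rho\frac{\omega+\varepsilon}{\omega-1+\varepsilon}} .
\]
This is a negative power of $N$ provided $\rho\,\frac{\omega}{\omega-1}>1+\delta$, with $\varepsilon$ taken small.

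\emph{Continuous approximations, step (iv).} On each progression $\{qm+k\}$, compare the sum over $m$ with $\frac1q\int e(\xi P(t)\psi(kp/q))\,dt$ via the mean value theorem. The $m$-derivative of the phase is $\lesssim |\xi|\,q N^{d-1}\|\psi\|_{\Lip}\le qN^{\delta-1}\|\psi\|_{\Lip}$, so each progression contributes an error $\lesssim 1+(N/q)\,qN^{\delta-1}\|\psi\|_{\Lip}$. Summing over the $q$ progressions and normalising gives a total error $\lesssim (\|\psi\|_{\Lip}+1)\,qN^{\delta-1}\le(\|\psi\|_{\Lip}+1)N^{\rho+\delta-1}$, which is acceptable provided $\rho<1-\delta$.

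Since $\gcd(p,q)=1$, the map $k\mapsto kp \bmod q$ permutes the residues, and $\psi$ is $1$-periodic, so $\psi(kp/q)$ runs over $\psi(r/q)$, $0\le r<q$. We are left with the Riemann sum $\frac1q\sum_{r}G(r/q)$, where
\[
G(s)=\frac1N\int_0^N e\big(\xi P(t)\psi(s)\big)\,dt .
\]
Here $G$ is Lipschitz with constant $\lesssim |\xi|N^d\|\psi\|_{\Lip}\lesssim N^{\delta}\|\psi\|_{\Lip}$. Hence the Riemann sum differs from $\int_0^1 G(s)\,ds$ by $\lesssim N^{\delta}\|\psi\|_{\Lip}/q\le \|\psi\|_{\Lip}N^{\delta-\rho/(\omega-1+\varepsilon)}$. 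This is acceptable if $\rho>\delta(\omega-1)$.

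\emph{Choice of $\rho$, the main obstacle.} The crux is showing that the three constraints
\[
\rho>\tfrac{(1+\delta)(\omega-1)}{\omega},\qquad \rho>\delta(\omega-1),\qquad \rho<1-\delta
\]
are compatible. The first and third are compatible if and only if $(1+\delta)(\omega-1)<\omega(1-\delta)$, which is equivalent to $\delta(2\omega-1)<1$; this is exactly the hypothesis $\delta<\frac{1}{2\omega-1}$. The second and third are compatible since $\delta\omega<1$. So we choose $\rho$ strictly inside the resulting interval, then $\varepsilon$ small enough that all strict inequalities persist. Finally, $\chi$ is the minimum of the gaps $1-\delta$, $\rho\frac{\omega+\varepsilon}{\omega-1+\varepsilon}-1-\delta$, $1-\delta-\rho$ and $\frac{\rho}{\omega-1+\varepsilon}-\delta$, so it depends only on $\delta$ and $\omega$. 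The threshold beyond which the lower bound $|\alpha-p/q|\ge q^{-\omega-\varepsilon}$ holds depends on $\alpha$, and this is absorbed into $C(\alpha,\delta,P)$.
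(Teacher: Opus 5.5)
Your proposal is correct and is essentially the paper's proof: floor removal, Dirichlet approximation with $q\le N^{\rho}$ plus the irrationality-exponent lower bound on $q$, freezing $\psi$ at $kp/q$, sum-to-integral in $m$, and a Riemann sum in $k$; the only difference is that the paper fixes the specific value $\rho=1-\frac{1}{2\omega'-1}$ (which lies in your admissible interval and makes $\rho/(\omega'-1)=2(1-\rho)$) rather than exhibiting the whole interval. One small repair: as written, your argument that $q$ is large enough for $|\alpha-p/q|\ge q^{-\omega-\varepsilon}$ to apply is circular, but it is fixed either by noting that $|\theta|\le N^{-\rho}\to0$ forces $q\to\infty$ since $\alpha$ is irrational, or by using the uniform bound $|\alpha-p/q|\ge c(\alpha,\varepsilon)\,q^{-\omega-\varepsilon}$ of Lemma~\ref{expbreath}.
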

Before providing the proof, let us collect a convenient reformulation of the notion of the irrationality exponent used in the sequel.
\begin{lemma}\label{expbreath}
For every irrational number $\alpha$ such that $\omega(\alpha)<\infty$ and every $\varepsilon>0$ there exists a constant $c(\alpha,\varepsilon)\in(0,1)$ such that
\begin{equation}\label{statement}
\big|\alpha-p/q\big|\ge \frac{c}{q^{\omega(\alpha)+\varepsilon}}\quad\text{for every $(p,q)\in\mathbb{Z}\times\mathbb{N}$ with $\gcd(p,q)=1$.}
\end{equation}
\end{lemma}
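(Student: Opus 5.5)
The plan is to argue directly from Definition~\ref{irexpdef}, by contradiction, and then to take the constant to be the infimum of the normalized errors over all coprime pairs. Fix an irrational $\alpha$ with $\omega\coloneq\omega(\alpha)<\infty$ and fix $\varepsilon>0$. Since $\omega+\varepsilon/2>\omega$ is strictly larger than the supremum of the set \eqref{irexpset}, the number $\omega+\varepsilon/2$ does not belong to that set. Hence only finitely many coprime pairs $(p,q)\in\mathbb{Z}\times\mathbb{N}$ satisfy $0<|\alpha-p/q|<q^{-(\omega+\varepsilon/2)}$. For $\alpha$ irrational the condition $0<|\alpha-p/q|$ holds automatically. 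So there is a finite exceptional set $E$ of coprime pairs, and for every coprime $(p,q)\notin E$ we have $|\alpha-p/q|\ge q^{-\omega-\varepsilon/2}\ge q^{-\omega-\varepsilon}$, using $q\ge1$. I will use the intermediate exponent $\omega+\varepsilon/2$ only for safety; working with $\omega+\varepsilon$ directly would give the same conclusion.

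Next I handle the finitely many pairs in $E$. For each $(p,q)\in E$, irrationality of $\alpha$ gives $|\alpha-p/q|>0$, so $q^{\omega+\varepsilon}|\alpha-p/q|$ is a strictly positive number. Let $c_0$ be the minimum of these finitely many positive numbers, with the convention $c_0=1$ if $E$ is empty. I then set $c\coloneq\min\{c_0,1\}/2$, which lies in $(0,1)$. For $(p,q)\notin E$ the bound from the first step already gives $q^{\omega+\varepsilon}|\alpha-p/q|\ge1>c$. For $(p,q)\in E$ we have $q^{\omega+\varepsilon}|\alpha-p/q|\ge c_0>c$. This proves \eqref{statement}, with $c$ depending only on $\alpha$ and $\varepsilon$.

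I do not expect any real obstacle here. The only point needing care is the set-theoretic reading of the supremum: I must confirm that an exponent above the supremum lies outside the set in \eqref{irexpset}. That is immediate, because if it lay in the set the supremum would be at least that large. Note that the set in \eqref{irexpset} is downward closed in $\omega$, although the argument does not even need this. The hypotheses that $\alpha$ is irrational and $\omega(\alpha)<\infty$ are exactly what make the exceptional pairs have nonzero error and the exponent $\omega+\varepsilon$ finite.
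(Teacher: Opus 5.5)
Your proposal is correct and follows essentially the same route as the paper: an exponent strictly above the supremum lies outside the set in \eqref{irexpset}, so only finitely many coprime pairs violate the bound, and the constant is the minimum of their finitely many positive normalized errors $q^{\omega+\varepsilon}|\alpha-p/q|$, capped below $1$. The differences are cosmetic: the paper uses the exponent $\omega+\varepsilon$ directly rather than your intermediate $\omega+\varepsilon/2$, and it takes $c=\min(\cdots\cup\{1/2\})$ instead of your $c=\min\{c_0,1\}/2$.
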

\begin{proof}
The claim is immediate from the definition, but we, nevertheless, include a short proof for completeness. Since $\omega(\alpha)$ is the supremum of the set in \eqref{irexpset} we get that
\[
|S(\alpha,\varepsilon)|\coloneq\big|\big\{(p,q)\in\mathbb{Z}\times\mathbb{N}:\,\gcd(p,q)=1\text{ and }0<\big|\alpha-p/q\big|<q^{-\omega(\alpha)-\varepsilon}\big\}\big|<\infty\text{,}
\]
and taking $c(\alpha,\varepsilon)\coloneq \min\big(\big\{q^{\omega(\alpha)+\varepsilon}\big|\alpha-p/q\big|:\,(p,q)\in S(\alpha,\varepsilon)\big\}\cup\{1/2\}\big)\in(0,1)$ completes the proof.
\end{proof}
We are now ready to give the proof of Proposition~$\ref{Majarcprop}$.
\begin{proof}[Proof of Proposition~$\ref{Majarcprop}$]
Let $\alpha,\omega,P,d,\psi$ and $\delta$ be as in the proposition's statement and fix
\[
\chi=\chi(\delta,\omega)\coloneq\frac{1}{2}\bigg(\frac{1}{2\omega-1}-\delta\bigg)>0\text{.}
\]
We allow all implicit constants to depend on $\alpha,\delta,P$. Let $N\in\mathbb{N}$ and $\xi\in\mathbb{R}$ be such that $|\xi|\le N^{-d+\delta}$. 
\smallskip
\paragraph{\textbf{Step 1: Removing the floor.}}
 The following manipulations are standard in the major arc regime:
\begin{equation}
\begin{split}
\label{removefloor}
\Big|\sum_{n\le N}e(\xi\lfloor P(n)\psi(\alpha n)\rfloor)-\sum_{n\le N}e(\xi P(n)\psi(\alpha n))\Big|\le\sum_{n\le N}|e(\xi\lfloor P(n)\psi(\alpha n)\rfloor)-e(\xi P(n)\psi(\alpha n))|&
\\
\lesssim  |\xi|\sum_{n\le N}\big|P(n)\psi(\alpha n)-\lfloor P(n)\psi(\alpha n)\rfloor\big|\le|\xi| N\le  N^{-d+\delta+1}\le N^{-\frac{1}{2\omega-1}+\delta+1}\le N^{1-\chi}&\text{,} 
\end{split}
\end{equation}
since $d\ge 1\ge(2\omega-1)^{-1}$, because $\omega\ge 2$. This constitutes the first approximant, and for the sake of book-keeping we let
\begin{equation}\label{E1er}
S_0\coloneq\sum_{n\le N}e(\xi\lfloor P(n)\psi(\alpha n)\rfloor)\text{, } S_1\coloneq \sum_{n\le N}e(\xi P(n)\psi(\alpha n))\text{ and }E_1\coloneq S_0-S_1\text{, so that }|E_1|\lesssim N^{1-\chi}\text{.}
\end{equation}
\smallskip
\paragraph{\textbf{Step 2: Diophantine approximation for $\alpha$.}} We perform a careful Diophantine approximation for $\alpha$. Since $\chi<\frac{1}{2\omega-1}-\delta$, there exists $\omega'=\omega'(\omega,\delta)>\omega\ge 2$ such that
\begin{equation}\label{chichoicedelta}
\chi<\frac{1}{2\omega'-1}-\delta\text{.}
\end{equation}
We fix such an $\omega'$ and also fix 
\begin{equation}\label{rhores}
\rho=\rho(\omega,\delta)\coloneq1-\frac{1}{2\omega'-1}\text{,}\quad\text{and note that by \eqref{chichoicedelta} we have $\rho+\delta<1-\chi$.}
\end{equation}
For every $N\in\mathbb{N}$ we apply the Dirichlet's principle to deduce the existence of a pair $(p,q)=(p(N),q(N))\in\mathbb{Z}\times\mathbb{N}$ with $\gcd(p,q)=1$, $q\le N^{\rho}$ and such that
\begin{equation}\label{thetaupbeg}
\Big|\alpha-\frac{p}{q}\Big|\le \frac{1}{qN^{\rho}}\text{.}
\end{equation}
Since $\alpha$ has irrationality exponent $\omega$ and $\omega'>\omega$,  Lemma~$\ref{expbreath}$ guarantees the existence of a constant $c\in(0,1)$, depending only on $\alpha,\delta$, such that
\[
\Big|\alpha-\frac{p}{q}\Big|\ge \frac{c}{q^{\omega'}}\text{.}
\]
This immediately yields that 
\begin{equation}\label{qlower}q\gtrsim N^{\frac{\rho}{\omega'-1}}=N^{2(1-\rho)}\text{,}
\end{equation}
since
\begin{equation}\label{qlowerboundnew}
\frac{1}{qN^{\rho}}\ge\frac{c}{q^{\omega'}}\iff q^{\omega'-1}\ge cN^{\rho}\Longrightarrow q\gtrsim N^{\frac{\rho}{\omega'-1}}=N^{2(1-\rho)}\text{,}
\end{equation}
where the last equality is justified by \eqref{rhores} as follows
\begin{equation}\label{easycalcrho}
2(1-\rho)=\frac{\rho}{\omega'-1}\iff \frac{2}{2\omega'-1}=\frac{1-\frac{1}{2\omega'-1}}{\omega'-1} 
\iff 2=\frac{2\omega'-2}{\omega'-1}\text{.}
\end{equation}
Similarly, letting $\theta=\theta(N)\coloneq \alpha-\frac{p}{q}$ be the error of the Diophantine approximation, we see that 
\begin{equation}\label{thetaupnew}
N|\theta|\le\frac{N}{qN^{\rho}}=q^{-1}N^{1-\rho}\lesssim N^{-2(1-\rho)} N^{1-\rho}=N^{-(1-\rho)}\text{.}
\end{equation}
We now split the summation into arithmetic progressions
 \begin{equation}
\begin{split}
S_1=&\sum_{n=1}^{N}e(\xi P(n)\psi(\alpha n))
=\sum_{k=0}^{q-1}\sum_{\frac{1-k}{q}\le m\le \frac{N-k}{q}}e\big(\xi P(qm+k)\psi(\alpha (qm+k))\big)
\\
=&\sum_{k=0}^{q-1}\sum_{\frac{1-k}{q}\le m\le\frac{N-k}{q}}e\big(\xi P(qm+k)\psi\big(\theta(qm+k)+kp/q\big)\big)\text{,}
\end{split}
\end{equation}
where for the last equality we used the periodicity of $\psi$ as follows
\begin{equation}\label{psiperiod}
\psi(\alpha(qm+k))=\psi\big((\theta+p/q)(qm+k)\big)=\psi\big(\theta(qm+k)+pm+kp/q\big)=\psi\big(\theta(qm+k)+kp/q\big)\text{.}
\end{equation}
\smallskip
\paragraph{\textbf{Step 3: Making the $\psi$-oscillation independent of $m$.}} We exploit the fact that $N|\theta|\lesssim N^{-(1-\rho)}$, see \eqref{thetaupnew}, in order to replace $\psi\big(\theta(qm+k)+kp/q\big)$ in the phase, with $\psi(kp/q)$. More precisely, we let
\begin{equation*}
\begin{split}
S_1=&
\sum_{k=0}^{q-1}\sum_{\frac{1-k}{q}\le m\le \frac{N-k}{q}}e\big(\xi P(qm+k)\psi(kp/q)\big)
\\
+&\sum_{k=0}^{q-1}\sum_{\frac{1-k}{q}\le m\le \frac{N-k}{q}}\Big(e\big(\xi P(qm+k)\psi(\theta (qm+k)+kp/q)\big)-e\big(\xi P(qm+k)\psi(kp/q)\big)\Big)\eqcolon S_2+E_2\text{,}
\end{split}
\end{equation*}
and bound the new error term as follows
\begin{equation}\label{E2er}
\begin{split}
|E_2|\le&\sum_{k=0}^{q-1}\sum_{\frac{1-k}{q}\le m\le \frac{N-k}{q}}\Big|e\big(\xi P(qm+k)\psi(\theta (qm+k)+kp/q)\big)-e\big(\xi P(qm+k)\psi(kp/q)\big)\Big|
\\
\lesssim& \sum_{k=0}^{q-1}\sum_{\frac{1-k}{q}\le m\le \frac{N-k}{q}}\big|\xi P(qm+k)\psi(\theta (qm+k)+kp/q)-\xi P(qm+k)\psi(kp/q)\big|
\\
= &|\xi| \sum_{k=0}^{q-1}\sum_{\frac{1-k}{q}\le m\le \frac{N-k}{q}}|P(qm+k)|\cdot\big| \psi(\theta (qm+k)+kp/q)-\psi(kp/q)\big|
\\
\lesssim &|\xi| \sum_{k=0}^{q-1}\sum_{\frac{1-k}{q}\le m\le \frac{N-k}{q}}(qm+k)^d\|\psi\|_{\Lip}| \theta (qm+k)|
\\
\lesssim& \|\psi\|_{\Lip}|\xi| \sum_{k=0}^{q-1}\sum_{\frac{1-k}{q}\le m\le \frac{N-k}{q}}N^d| \theta |N
\\
\lesssim& \|\psi\|_{\Lip}|\xi| N^{d+1}N^{-(1-\rho)}\lesssim \|\psi\|_{\Lip} N^{\delta+\rho}\lesssim\|\psi\|_{\Lip} N^{1-\chi}\text{,}
\end{split}
\end{equation}
where we have used the fact that $|P(x)|\lesssim x^{d}$, as well as \eqref{thetaupnew} and \eqref{rhores}. Thus, our main term now becomes $S_2$, which is amenable to standard approximation arguments for its two sums  by their continuous counterparts. We perform these approximations in the final two steps.
\smallskip
\paragraph{\textbf{Step 4: Passing to the continuous counterpart in $m$.}} Note that
\begin{equation}
\begin{split}
S_2=& \sum_{k=0}^{q-1}\sum_{\frac{1-k}{q}\le m\le \frac{N-k}{q}}e\big(\xi P(qm+k)\psi(kp/q)\big)=\sum_{k=0}^{q-1}\int_{-\frac{k}{q}}^{\frac{N-k}{q}}e\big(\xi P(qy+k)\psi(kp/q)\big)dy
\\
+&\sum_{k=0}^{q-1}\Big(\sum_{\frac{1-k}{q}\le m\le\frac{N-k}{q}}e\big(\xi P(qm+k)\psi(kp/q)\big)-\int_{-\frac{k}{q}}^{\frac{N-k}{q}}e\big(\xi P(qy+k)\psi(kp/q)\big)dy\Big)\eqcolon S_3+E_3\text{.}
\end{split}
\end{equation}
It is not difficult to see that
\begin{equation}
\begin{split}
S_3=\frac{1}{q}\sum_{k=0}^{q-1}\int_{0}^Ne\big(\xi P(t)\psi(kp/q)\big)dt=\frac{1}{q}\sum_{r=0}^{q-1}\int_{0}^Ne\big(\xi P(t)\psi(r/q)\big)dt\text{.}
\end{split}
\end{equation}
The first equality is a change of variables, and the second can be justified using the fact that the map 
\[
\mathbb{Z}\ni r\to\psi(r/q)\in\mathbb{R}\quad\text{depends only on the value of $r\Mod{q}$ by the periodicity of $\psi$,}
\]
as well as the fact that the map 
\[
\mathbb{Z}/(q\mathbb{Z})\ni k\to pk\in\mathbb{Z}/(q\mathbb{Z})\quad\text{is a bijection, since $\gcd(p,q)=1$. }
\]
For the error term we note that
\begin{equation}
\begin{split}
|E_3|\le&\sum_{k=0}^{q-1}\Big|\sum_{\frac{1-k}{q}\le m\le\frac{N-k}{q}}e\big(\xi P(qm+k)\psi(kp/q)\big)-\int_{-\frac{k}{q}}^{\frac{N-k}{q}}e\big(\xi P(qy+k)\psi(kp/q)\big)dy\Big|
\\
\le&\sum_{k=0}^{q-1}\Big|\sum_{ m\le \frac{N}{q}}\int_{m-1}^{m}\Big(e\big(\xi P(qm+k)\psi(kp/q)\big)-e\big(\xi P(qy+k)\psi(kp/q)\big)\Big)dy\Big|
\\
+&\sum_{k=0}^{q-1}\Big(\sum_{\frac{1-k}{q}\le m< 1}1+\sum_{\frac{N-k}{q}<m\le\frac{N}{q} }1\Big)+\sum_{k=0}^{q-1}\Big(\Big|\int_{\lfloor N/q\rfloor}^{N/q}e\big(\xi P(qy+k)\psi(kp/q)\big)dy\Big|+2\Big)
\\
\lesssim&q+\sum_{k=0}^{q-1}\sum_{m\le \frac{N}{q}}\int_{m-1}^{m}\big|e\big(\xi P(qm+k)\psi(kp/q)\big)-e\big(\xi P(qy+k)\psi(kp/q)\big)\big|dy
\\
\lesssim&q+\sum_{k=0}^{q-1}\sum_{m\le \frac{N}{q}}\int_{m-1}^{m}\big|\xi P(qm+k)\psi(kp/q)-\xi P(qy+k)\psi(kp/q)\big|dy
\\
\lesssim&q+|\xi| \sum_{k=0}^{q-1}\big|\psi(kp/q)\big|\sum_{m\le \frac{N}{q}}\int_{m-1}^{m}|P(qm+k)-P(qy+k)|dy
\\
\lesssim& q+|\xi| \|\psi\|_{\Lip}\sum_{k=0}^{q-1}\sum_{m\le N/q}\int_{m-1}^m|P(qm+k)-P(qy+k)|dy\text{.}
\end{split}
\end{equation}
To conclude, we note that by the Mean Value Theorem, for every $k\in\{0,\dotsc,q-1\}$, $m\in[N/q]$ and $y\in(m-1,m)$, there exists $y_{m,k}\in(y,m)$ such that
\[
|P(qm+k)-P(qy+k)|=q|P'(qy_{m,k}+k)(m-y)| \lesssim q(N+q)^{d-1}\text{,}
\]
and thus
\begin{equation}\label{E3er}
\begin{split}
|E_3|\lesssim& q+|\xi| \|\psi\|_{\Lip}\sum_{k=0}^{q-1}\sum_{m\le N/q}\int_{m-1}^mq(N+q)^{d-1}dy
\le q+|\xi|\|\psi\|_{\Lip}q(N/q)q(2N)^{d-1}
\\
\lesssim& q+\|\psi\|_{\Lip}|\xi|N^dq \lesssim \big(\|\psi\|_{\Lip}+1\big)N^{\delta}q
\lesssim \big(\|\psi\|_{\Lip}+1\big)N^{\delta+\rho}\le \big(\|\psi\|_{\Lip}+1\big)N^{1-\chi}\text{.}
\end{split}
\end{equation}
Thus we focus on $S_3$.
\smallskip
\paragraph{\textbf{Step 5: Passing to the continuous counterpart in $k$.}} For the final step, we note that
\begin{equation}
\begin{split}
S_3=&\int_0^N\frac{1}{q}\sum_{r=0}^{q-1}e\big(\xi P(t)\psi(r/q)\big)dt=\int_{0}^N\frac{1}{q}\int_{0}^{q}e\big(\xi P(t)\psi(u/q)\big)dudt
\\
+&\frac{1}{q}\int_0^{N}\bigg(
\sum_{r=0}^{q-1}e\big(\xi P(t)\psi(r/q)\big)-\int_{0}^{q}e\big(\xi P(t)\psi(u/q)\big)du\bigg)dt\eqcolon S_4+E_4\text{.}
\end{split}
\end{equation}
After a change of variables followed by a change of the order of integration we get that
\[
S_4=\int_{0}^{1}\int_{0}^Ne\big(\xi P(t)\psi(s)\big)dtds\text{,}
\]
and the final error term can be handled using \eqref{qlower} as follows
\begin{equation}\label{E4er}
\begin{split}
|E_4|\le&
 \frac{1}{q}\int_0^{N}\bigg|
\sum_{r=0}^{q-1}\int_r^{r+1}\Big(e\big(\xi P(t)\psi(r/q)\big)-e\big(\xi P(t)\psi(u/q)\big)\Big)du\bigg|dt
\\
\lesssim&  \frac{1}{q}\int_0^{N}
\sum_{r=0}^{q-1}\int_r^{r+1}\big|\xi P(t)\psi(r/q)-\xi P(t)\psi(u/q)\big|dudt
\\
\lesssim& \frac{|\xi| N^d}{q}\int_0^{N}
\sum_{r=0}^{q-1}\int_r^{r+1}|\psi(r/q)-\psi(u/q)|dudt
\\
\lesssim& \frac{\|\psi\|_{\Lip} N^{\delta}}{q}\int_0^{N}
\sum_{r=0}^{q-1}\int_r^{r+1}q^{-1}dudt\lesssim \|\psi\|_{\Lip} N^{\delta+1}q^{-1}
\\
\lesssim&\|\psi\|_{\Lip}N^{1+\delta-2+2\rho}\lesssim \|\psi\|_{\Lip}N^{\delta+\rho-(1-\rho)}\lesssim\|\psi\|_{\Lip}N^{1-\chi}
\text{.}
\end{split}
\end{equation}
We can immediately conclude now since
\begin{equation}
\begin{split}
\Big|\sum_{n=1}^Ne&(\xi \lfloor P(n)\psi(\alpha n)\rfloor)-\int_0^1\int_0^Ne(\xi P(t)\psi( s))dtds\Big|
=|S_0-S_4|
\\
\le& \sum_{i\in[4]}|S_{i-1}-S_i|
=\sum_{i\in[4]}|E_i|\lesssim\big(\|\psi\|_{\Lip}+1\big)N^{1-\chi}\text{,}\quad\text{by \eqref{E1er}, \eqref{E2er}, \eqref{E3er}, \eqref{E4er}.
}
\end{split}
\end{equation}
\end{proof}
\section{Minor arc estimates}\label{minsec}
This section is devoted to the minor arc analysis, namely to establishing estimates of the form \eqref{minarcestcompletedif} and \eqref{minarcestcompleteeasy}. While the latter can be obtained relatively straightforwardly, establishing the former constitutes the most intricate part of the present work. The main result of this section is the following.
\begin{proposition}[Minor arc estimates]
\label{minarcpropfull}
Assume that $\psi\colon \mathbb{R}\to\mathbb{R}$ is a smooth $1$-periodic function such that none of its derivatives has a root of infinite order. Let $\alpha\in\mathbb{R}$ be such that  $\omega(\alpha)\eqcolon \omega\in[2,\infty)$, let $P\in\mathbb{R}[x]$ be of degree $d\ge 1$ and assume that
\begin{equation}\label{mudrestrmain}
\omega<1+\sqrt{d+1}\text{.}
\end{equation}
Then for every $\delta>0$ there exist positive constants $\chi=\chi(\alpha,P,\psi,\delta)$ and $C=C(\alpha,P,\psi,\delta)$ such that for every $N\in\mathbb{N}$ and $\xi\in\mathbb{R}$ with $N^{-d+\delta}\le |\xi|\le 1/2$ we have
\begin{equation}\label{minarxestfloored}
\Big|\sum_{n=1}^Ne\big(\xi \lfloor P(n)\psi(\alpha n)\rfloor\big)\Big|\le CN^{1-\chi}\text{.}
\end{equation}
\end{proposition}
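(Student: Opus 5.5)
We prove Proposition~\ref{minarcpropfull} in three stages, following the outline of subsection~\ref{minsubsubsec}. First we establish the unfloored dyadic estimate \eqref{estminmain}: for all $N$ and $N^{-d+\delta}\le|\xi|\le N$,
\[
\Big|\sum_{N<n\le 2N}e\big(\xi P(n)\psi(\alpha n)\big)\Big|\lesssim N^{1-\chi}.
\]
Note the wider range $|\xi|\le N$, needed for floor removal below. Then we sum dyadic blocks to get the non-dyadic version. Finally we remove the floor generically. For the floor, write $e(\xi\lfloor y\rfloor)=e(\xi y)e(-\xi\{y\})$ and expand $e(-\xi\{y\})$ in a truncated Fourier series in $\{y\}$ (Vaaler-type, with $M=N^{\epsilon}$ terms). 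The coefficients decay like $|\xi|/(|\xi-h|+1)$-type weights. This produces sums $\sum_n e((\xi+h)P(n)\psi(\alpha n))$ with $|h|\le M$. The small-$|h|$ terms with $|\xi+h|\ge N^{-d+\delta}$ are covered by the unfloored estimate. When $\xi+h$ lands near $0$ (only possible for $h=0$, since $|\xi|\le1/2$ and $|\xi|\ge N^{-d+\delta}$), the frequency is exactly $\xi$, again in range. The truncation error is controlled by the discrepancy of $\{P(n)\psi(\alpha n)\}$ near integers, which is itself bounded by the same exponential sums with integer frequencies $h\le M$, hence in range $|h|\le N$.

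For the dyadic estimate we follow the strategy sketched before. Fix $\rho\in(0,1)$ and use Dirichlet to choose $p/q$ with $q\le N^{\rho}$ and $|\theta|\le 1/(qN^\rho)$. Lemma~\ref{expbreath} with $\omega'$ slightly above $\omega$ gives $q\gtrsim N^{\rho/(\omega'-1)}$ and $|\theta|\gtrsim q^{-\omega'}$, which pins $q|\theta|$ between powers of $N$. Split $n=qm+r$ and use periodicity as in \eqref{psiperiod}. This gives $q$ sums of length $\simeq N/q$ with phases $F_r(x)=\xi P(qx+r)\psi(\theta(qx+r)+rp/q)$. The small-$\xi$ subrange ($|\xi|$ near $N^{-d+\delta}$) is handled like the major arc: there $\theta$-dependence is negligible, so $F_r\approx \xi\psi(rp/q)P(qx+r)$. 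We then apply van der Corput/Vinogradov to a polynomial whose leading coefficient has size $|\xi\psi(rp/q)|q^d$, discarding the $r$ with $|\psi(rp/q)|$ tiny. Those $r$ are few because $rp/q$ equidistributes and $\psi$ has no zero of infinite order, so $|\{s:|\psi(s)|<\eta\}|\lesssim\eta^{c}$.

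In general we need the derivative bounds \eqref{derF}. By Leibniz, $F_r^{(j)}(x)=\xi\sum_i\binom{j}{i}q^{i}P^{(i)}\,q^{j-i}\theta^{j-i}\psi^{(j-i)}$. The dominant term involves $\xi q^j\theta^{j-d}P^{(d)}\psi^{(j-d)}(\cdot)$ once $N|\theta|$ is small, up to lower-order terms with ratio $\lesssim N|\theta|$. The lower bound requires $|\psi^{(j-d)}(\theta(qx+r)+rp/q)|\ge N^{-\sigma}$ for all $x\simeq N/q$, and this should hold for most $r$. Since the argument moves only by $\lesssim N|\theta|$, it suffices that $rp/q$ avoids the set where $|\psi^{(j-d)}|<2N^{-\sigma}$. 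That set has measure $\lesssim N^{-c\sigma}$ by a \L ojasiewicz-type bound from the finite-order zeros; this is Lemma~\ref{elementarybutdifficult}. Equidistribution of $\{rp/q\}$ on the $1/q$ grid makes the excluded $r$ a proportion $\lesssim N^{-c\sigma}+N|\theta|$.

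For the good $r$, we choose $j_0$ so that $\lambda:=|\xi|q^{j_0}|\theta|^{j_0-d}$ lies in the Vinogradov-friendly window relative to the length $L=N/q$, roughly $L^{-j_0+\text{small}}\le\lambda\le L^{-c}$ in the Vinogradov normalization. We then apply the generic Vinogradov estimate (Proposition~\ref{genericvin}) with the Wooley/Bourgain--Demeter--Guth mean value theorem, gaining $L^{-c/j_0^2}$. Since multiplying by $q|\theta|$ per derivative is fast decay, such a $j_0\lesssim1$ exists for every $|\xi|\le N$ and $\xi\ge N^{-d+\delta}$.

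\textbf{Main obstacle.} The main obstacle is the numerology. We must choose $\rho$ so that three things hold at once: $N|\theta|\le N^{-\tau}$ (using $q\gtrsim N^{\rho/(\omega-1)}$), the sum length $N/q\ge N^{1-\rho}$ stays large, and a suitable $j_0$ exists across the whole range of $\xi$. Comparing the upper bound $|\theta|\le q^{-1}N^{-\rho}$ with the lower bound $q^{-\omega}$ and requiring that the step from derivative $d$ to the window not overshoot should produce a quadratic inequality in $\omega$, namely $(\omega-1)^2<d+1$. That is exactly \eqref{mudrestrmain}, and we would tune $\rho$ and $\sigma$ to satisfy it.
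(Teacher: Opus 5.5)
Your proposal is essentially the paper's proof---the same three stages (dyadic unfloored bound for $N^{-d+\delta}\le|\xi|\le N$, dyadic summation, generic floor removal), the same Dirichlet approximation with $q\le N^{\rho}$ and $q,|\theta|$ pinned via Lemma~\ref{expbreath}, the same Leibniz lower bounds off the residues excluded by Lemma~\ref{elementarybutdifficult}, and a single derivative $I=\{j_0\}$ in Proposition~\ref{genericvin}---and your two constraints are exactly the paper's $1-\frac{1}{\omega'}<\rho<\frac{d+1}{d+\omega'}$, which are compatible precisely when $(\omega-1)^2<d+1$ (your separate small-$\xi$ treatment is unnecessary, since it is subsumed by the case $j_0=d$). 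When filling in the details, take the Taylor degree $k$ large and independent of $j_0$: the remainder in Proposition~\ref{genericvin} is $\simeq q^{-1}N^{d+1}|\xi|(N|\theta|)^{k+1-d}$, so one needs $(k+1-d)\tau\ge d+2$, and the gain is then $N^{-c\sigma/s^{2}}$ with $s=k(k+1)/2$ fixed rather than $L^{-c/j_0^{2}}$; also, the proportion of excluded residues is $\lesssim N^{-c\sigma}+q^{-1}$ (one stray grid point per interval, harmless by the lower bound on $q$), not $N^{-c\sigma}+N|\theta|$.
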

The most important intermediate step towards the previous proposition is the following ``unfloored'' dyadic variant of the exponential sum estimate.
\begin{proposition}\label{minarcprop}
Assume that $\psi\colon \mathbb{R}\to\mathbb{R}$ is a smooth $1$-periodic function such that none of its derivatives has a root of infinite order. Let $\alpha\in\mathbb{R}$ be such that  $\omega(\alpha)\eqcolon \omega\in[2,\infty)$, let $P\in\mathbb{R}[x]$ be of degree $d\ge 1$ and assume that
\begin{equation}\label{mudrestr}
\omega<1+\sqrt{d+1}\text{.}
\end{equation}
Then for every $\delta>0$ there exist positive constants $\chi=\chi(\alpha,P,\psi,\delta)$ and $C=C(\alpha,P,\psi,\delta)$ such that for every $N\in[1,\infty)$ and $\xi\in\mathbb{R}$ with $N^{-d+\delta}\le |\xi|\le N$ we have
\begin{equation}\label{goal1expsummin}
\Big|\sum_{N<n\le 2N}e(\xi P(n)\psi\big(\alpha n)\big)\Big|\le CN^{1-\chi}\text{.}
\end{equation}
\end{proposition}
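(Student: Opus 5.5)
Following the strategy outlined in subsection~\ref{minsubsubsec}, I would fix a parameter $\rho\in(0,1)$, to be chosen at the end in terms of $\omega$ and $d$. For each $N$, Dirichlet's principle then gives coprime $(p,q)$ with $q\le N^{\rho}$ and $\theta\coloneq\alpha-p/q$ satisfying $|\theta|\le q^{-1}N^{-\rho}$. Lemma~\ref{expbreath}, applied with some $\omega'>\omega$ arbitrarily close to $\omega$, gives $c/q^{\omega'}\le|\theta|$. This yields the lower bounds $q\gtrsim N^{\rho/(\omega'-1)}$ and $|\theta|\gtrsim N^{-\rho\omega'/(\omega'-1)}$, alongside the upper bound $|\theta|\lesssim N^{-\rho-\rho/(\omega'-1)}$. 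I would require $\rho+\rho/(\omega'-1)>1$, so that $N|\theta|\lesssim N^{-\tau}$. As in \eqref{psiperiod}, splitting $n=qm+r$ reduces \eqref{goal1expsummin} to bounding, for each $r\in\{0,\dots,q-1\}$, the sum over $m\simeq N/q$ of $e(F(m))$, where $F(x)=\xi P(qx+r)\psi(\theta(qx+r)+rp/q)$.

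\textbf{Step 1 (derivative control).} Since $N|\theta|\lesssim N^{-\tau}$, the argument of $\psi$ stays within $N^{-\tau}$ of $rp/q$. Expanding by Leibniz, $F^{(j)}(x)=\xi q^j\sum_{i}\binom{j}{i}P^{(j-i)}(qx+r)\,\theta^{i}\psi^{(i)}(\cdot)$. For $j>d$ only the terms with $i\ge j-d$ survive, and these have size $|\xi|q^j|\theta|^{j-d}N^{0}\cdot|\psi^{(j-d)}(\cdot)|$, the term $i=j-d$ dominating because $N|\theta|\ll1$. The upper bound in \eqref{derF} is then immediate. For the lower bound I would use the fact that $\psi^{(j-d)}$ has no root of infinite order. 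By compactness, for each fixed $k$ there is $K$ such that at every point some derivative $\psi^{(k+l)}$ with $l\le K$ is bounded below, and hence $|\psi^{(k)}(y)|\gtrsim\mathrm{dist}(y,Z_k)^{K}$, where $Z_k$ is the finite zero set of $\psi^{(k)}$. Because $\{rp/q\}$ runs over the grid $q^{-1}\mathbb{Z}$, all but $O(q N^{-\sigma'})+O(1)$ residues $r$ keep $rp/q$ at distance $\ge N^{-\sigma'}$ from $Z_{j-d}$. On those residues the lower bound holds with loss $N^{-\sigma}$, once the $\theta$-shift $N^{-\tau}\ll N^{-\sigma'}$ is taken into account. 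The exceptional residues contribute at most $(qN^{-\sigma'}+1)(N/q)\le N^{1-\sigma'}+N^{1-\rho}$, which is acceptable.

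\textbf{Step 2 (Vinogradov).} For a typical $r$ I would choose $j_0=j_0(\xi,N)$ such that $\lambda\coloneq|\xi|q^{j_0}|\theta|^{j_0-d}$ lies in a suitable range, roughly $(N/q)^{-j_0+\text{small}}\le\lambda\le$ small. Such a $j_0$ exists because $q|\theta|\le N^{-\rho}$, so $\lambda$ decreases geometrically in $j_0$ and the window between consecutive values has controlled width. I would then apply the generic Vinogradov estimate, Proposition~\ref{genericvin}, which takes one derivative pinned in a dyadic range on an interval of length $L=N/q$, together with the Vinogradov mean value theorem of Wooley and Bourgain--Demeter--Guth. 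The output is a bound $(N/q)^{1-c/j_0^2}$ whenever $\lambda$ lies in $[L^{-j_0}, 1]$ up to $N^{\pm\epsilon}$ factors. The cases $j_0\le d$ are handled by the same pattern applied to the polynomial part, which is where $|\xi|\ge N^{-d+\delta}$ enters; alternatively they are ruled out by the range of $\xi$. Since the relevant $j_0$ are bounded in terms of $\omega,d,\delta$, summing over $r$ gives $N^{1-\chi}$.

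\textbf{Main obstacle: numerology.} The existence of an admissible $j_0$ for every $\xi\in[N^{-d+\delta},N]$ has to be reconciled with the competing needs for $q$ to be small, so that $L=N/q$ is a large interval, and for $|\theta|$ to be both small enough ($N|\theta|\ll1$) and not too small. The latter matters because otherwise the $d$-th to $(d+1)$-th derivative jump kills $\lambda$, with $\xi q^{d+1}|\theta|$ possibly falling below $L^{-(d+1)}$ while $\xi q^d$ is still too large. Optimizing $\rho$ against the lower bound $|\theta|\gtrsim q^{-\omega'}$ and the requirement $\rho\omega'/(\omega'-1)$ versus $d$ should produce precisely the quadratic condition $(\omega-1)^2<d+1$ of \eqref{mudrestr}. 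I would first try to verify that condition in the worst case $|\xi|\approx N^{-d+\delta}$ with $j_0=d+1$.
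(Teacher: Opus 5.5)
Your route is the paper's: Dirichlet approximation with $q\le N^{\rho}$, the irrationality-exponent bounds, progressions mod $q$, the Leibniz expansion with the $\psi^{(j-d)}$ term dominant, discarding residues where some $\psi^{(i)}$ is small near $rp/q$, and Vinogradov's method with one pinned derivative $j_0(\xi,N)$. The genuine gap is that the one place where the hypothesis $\omega<1+\sqrt{d+1}$ is used is never proved. That place is the existence of an admissible $j_0$ for every $\xi$ with $N^{-d+\delta}\le|\xi|\le N$; you only say it ``should produce'' the condition. One of the bounds you set up for it is also wrong.

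Your lower bound $|\theta|\gtrsim N^{-\rho\omega'/(\omega'-1)}$ coincides with your own upper bound $N^{-\rho-\rho/(\omega'-1)}$. Deriving it from $|\theta|\ge cq^{-\omega'}$ would need $q\lesssim N^{\rho/(\omega'-1)}$, the reverse of what you have. The correct bound is only $|\theta|\gtrsim q^{-\omega'}\ge N^{-\rho\omega'}$. It is exactly this weaker bound that limits how fast $\lambda_{j}\mapsto\lambda_{j+1}=\lambda_j\,q|\theta|$ can drop, and so constrains $\rho$ from above. With your bound, the two requirements on $\rho$ would be compatible for every $\omega$ and $d$, which already signals the error.

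You also misplace the binding case. It is not $|\xi|\approx N^{-d+\delta}$: for small $\delta$ one has $j_0=d$ there, and $|\xi|\ge N^{-d+\delta}$ is exactly what handles it. It is the transition $|\xi|q^{d}\approx1$ (in the paper's normalization $|\xi|\approx N^{-\rho d}$), where one is forced to $j_0=d+1$. The paper's computation runs as follows.
\begin{itemize}
\item Take $j_0$ minimal in $[d,d_0]$ with $|\xi||\theta|^{j_0-d}N^{\rho j_0}\le N^{-2\sigma}$.
\item If $j_0>d$, minimality gives $|\xi|^{-1}|\theta|^{d-j_0}<N^{2\sigma+\rho(j_0-1)}|\theta|^{-1}$.
\item With $|\theta|^{-1}\lesssim N^{\rho\omega'}$, the term $|F^{(j_0)}|^{-1}X^{-2j_0}$ of $\Delta_{\{j_0\}}$ is $\lesssim N^{O(\sigma)+\rho(d+\omega')-(d+1)}$, worst at $j_0=d+1$. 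This requires $\rho<\frac{d+1}{d+\omega'}$.
\item Meanwhile $N|\theta|\lesssim N^{-\tau}$ requires $\rho>1-\frac{1}{\omega'}$.
\item The interval $\big(1-\frac1\omega,\frac{d+1}{d+\omega}\big)$ is nonempty iff $(\omega-1)^2<d+1$.
\end{itemize}
This is the heart of the proposition and has to be carried out.

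You also ignore the Taylor-remainder term in Proposition~\ref{genericvin}. Writing $\lambda_j=|\xi|q^j|\theta|^{j-d}$, $M\approx N/q$ and $X\approx M^{1/2}$, that term is about $M^{1+j_0}\lambda_{j_0}(N|\theta|)^{k+1-j_0}$. For $\lambda_{j_0}$ near $1$ this far exceeds $M$ unless $k-j_0\gtrsim j_0/\tau$. So $k$ cannot be comparable to $j_0$, as your claimed output $(N/q)^{1-c/j_0^2}$ suggests. The paper fixes $k+1=d_0\ge d+\frac{d+2}{\tau}$ once and for all, with $s=\frac{k(k+1)}{2}$ and $X=\lfloor M^{1/2}N^{-\sigma/(2d_0)}\rfloor$. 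This makes the remainder $\lesssim q^{-1}N^{d+2-\sigma}(N|\theta|)^{d_0-d}\lesssim1$, and the mean value theorem then gives a saving $N^{-\sigma/(8s^2)}$.

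A smaller slip concerns the exceptional residues. They contribute $(qN^{-\sigma'}+1)N/q\lesssim N^{1-\sigma'}+N^{1-\rho/(\omega'-1)}$, not $N^{1-\rho}$. Indeed $q\le N^{\rho}$ gives $N/q\ge N^{1-\rho}$, so you need the lower bound $q\gtrsim N^{\rho/(\omega'-1)}$ here.
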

\subsection{Key proof ingredients} In this subsection we collect the following three results before embarking on the proof of Proposition~$\ref{minarcprop}$. The first is the Vinogradov Mean Value Theorem~\cite{W,BDG}, the second is a technical intermediate step of Vinogradov's method for estimating exponential sums, and the last one offers the precise way we translate the fact that $\psi$ is such that none of its derivatives has a root of infinite order.
\begin{theorem}[Vinogradov Mean Value Theorem]\label{VMVTf}
For every $s,k\in\mathbb{N}$ and $\varepsilon>0$, there exists a positive constant $C=C(s,k,\varepsilon)$ such that for every $X\in\mathbb{N}$ we have
\begin{equation}
J_{s,k}(X)\le CX^{\varepsilon}\Big(X^s+X^{2s-\frac{k(k+1)}{2}}\Big)\text{,}
\end{equation}
where
\[
J_{s,k}(X)
\coloneqq\big|\big\{(\boldsymbol{u},\boldsymbol{v})\in [X]^s\times[X]^s:\,
\sum_{i\in[s]}(u_i^j-v_i^j)=0
\text{ for every }j\in[k]\big\}\big|.
\]
\end{theorem}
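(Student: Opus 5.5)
This statement is the Vinogradov Mean Value Theorem, which the paper takes from the literature (Wooley \cite{W} for $k=3$, Bourgain--Demeter--Guth \cite{BDG} in general). I would not reprove it from scratch. Instead I would reduce it to a single critical-exponent estimate and then invoke the decoupling theorem for the moment curve. The starting point is orthogonality: with $F(\boldsymbol{\beta})=\sum_{n\le X}e(\beta_1 n+\dotsb+\beta_k n^k)$ we have
\[
J_{s,k}(X)=\int_{[0,1]^k}|F(\boldsymbol{\beta})|^{2s}\,d\boldsymbol{\beta},
\]
so the task is an $L^{2s}$ bound for an exponential sum along the moment curve.

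\textbf{Step 1: reduction to the critical exponent.} Let $s_0\coloneq k(k+1)/2$. It suffices to prove $J_{s_0,k}(X)\lesssim_{\varepsilon}X^{s_0+\varepsilon}$.
\begin{itemize}
\item For $s\le s_0$, H\"older's inequality on the probability space $[0,1]^k$ gives $J_{s,k}\le J_{s_0,k}^{s/s_0}\lesssim X^{s+\varepsilon}$.
\item For $s>s_0$, the trivial bound $|F|\le X$ gives $J_{s,k}\le X^{2(s-s_0)}J_{s_0,k}\lesssim X^{2s-s_0+\varepsilon}$.
\end{itemize}
Together these yield $X^{\varepsilon}(X^s+X^{2s-k(k+1)/2})$. (The cases $k=1,2$ are also elementary: $k=1$ is trivial, and $k=2$ follows from divisor bounds.)

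\textbf{Step 2: decoupling.} I would invoke the $\ell^2$ decoupling inequality for the moment curve $\gamma(t)=(t,\dotsc,t^k)$ at the endpoint $p=k(k+1)$. Rescale $n=Xt$ with $t\in[0,1]$, so that the frequencies $\gamma(n/X)$ are $1/X$-separated points on the curve. Each point then sits in its own $\delta$-cap with $\delta=1/X$, whose anisotropic neighbourhood has dimensions $\delta\times\delta^2\times\dotsb\times\delta^k$. Decoupling gives, for any coefficients $a_n$,
\[
\Big\|\sum_{n\le X}a_n e(x\cdot\gamma(n/X))\Big\|_{L^{p}(w_B)}\lesssim_\varepsilon X^{\varepsilon}\Big(\sum_n\|a_n e(x\cdot\gamma(n/X))\|_{L^p(w_B)}^2\Big)^{1/2}
\]
on balls $B$ of radius $X^k$. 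With $a_n=1$, after undoing the scaling and using periodicity to pass from averages over $B$ back to $[0,1]^k$, this becomes $\|F\|_{L^p([0,1]^k)}\lesssim X^{1/2+\varepsilon}$. Raising to the power $p=2s_0$ gives $J_{s_0,k}\lesssim X^{s_0+\varepsilon}$.

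\textbf{Main obstacle.} The real difficulty is the decoupling theorem itself. Its proof goes by induction on scales, combining multilinear Kakeya/Brascamp--Lieb estimates, a lower-dimensional induction over $k$, and a bootstrapping argument for the decoupling constant. This is far beyond a self-contained treatment here, so the plan is to cite \cite{BDG}, noting Wooley's efficient congruencing \cite{W} as an alternative route. Only the bookkeeping in Steps 1 and 2 (the reduction and the rescaling) needs to be written out.
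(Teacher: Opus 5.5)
Your proposal is correct and takes essentially the same approach as the paper, which likewise proves nothing here and simply cites Wooley \cite{W} and Bourgain--Demeter--Guth \cite{BDG}, noting that $k=1$ is trivial and $k=2$ is classical. Your extra bookkeeping (H\"older and the trivial bound $|F|\le X$ to reduce to the critical exponent $s_0=k(k+1)/2$, then rescaling the $\ell^2$ decoupling inequality at $p=k(k+1)$ and using periodicity) is the standard derivation and is correct.
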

\begin{proof} The case $k=1$ is trivial, the case $k=2$ is  classical, and the cases $k= 3$ and $k\ge 4$ are breakthrough results of Wooley \cite{W} and Bourgain-Demeter-Guth \cite{BDG}, respectively, see also \cite{Lillian}.
\end{proof}
A key mechanism of Vinogradov's method for estimating exponential sums is provided below.
\begin{proposition}\label{genericvin}
Let $s,k,N\in\mathbb{N}$ and let $\mathfrak{h}\in\mathcal{C}^{k+1}([N,3N]\to\mathbb{R})$. Then for every  $X\in[3, N^{1/2}]\cap\mathbb{N}$ and $I\subseteq [k]$ we have
\begin{equation}
\label{vinmethod}
\begin{split}
\Big|\frac{1}{N}\sum_{a=N+1}^{2N}e(\mathfrak h(a))\Big|\le& \frac{1}{N}\sum_{a=N+1}^{2N}J_{s,k}(X)^{\frac{1}{2s^2}}\Delta_I(a)^{\frac{1}{4s^2}} s^{\frac{k}{2s^2}}X^{\frac{k(k+1)}{4s^2}-\frac{1}{s}}\\
&+2\pi X^{2(k+1)}\sup_{\theta\in(N+1, 3N)}\bigg|\frac{\mathfrak h^{(k+1)}(\theta)}{(k+1)!}\bigg|+\frac{2X^2}{N}\text{,}
\end{split}
\end{equation}
where 
\begin{equation}
\label{DeltaIdef}
\Delta_I(a)\coloneq\big(10k\log(3sX)\big)^k\prod_{j\in I}\bigg(\bigg|\frac{\mathfrak h^{(j)}(a)}{j!}\bigg|+\bigg|\frac{\mathfrak h^{(j)}(a)}{j!} \bigg|^{-1}X^{-2j}\bigg).
\end{equation}
\end{proposition}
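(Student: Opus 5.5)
The plan is the classical Vinogradov shifting argument. First I average over shifts of the form $a\mapsto a+xy$ and Taylor expand $\mathfrak h$ to degree $k$. This turns the sum into a bilinear exponential sum with polynomial phase in $x,y$, whose coefficients are $c_j(a)\coloneq\mathfrak h^{(j)}(a)/j!$. Then two applications of Hölder's inequality reduce matters to counting solutions of the Vinogradov system, where Theorem~$\ref{VMVTf}$ is not invoked but only the quantity $J_{s,k}(X)$ appears. Reciprocal-sum bounds in the coefficients $c_j$ with $j\in I$ produce the factor $\Delta_I(a)$.

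\textbf{Step 1: shifting and Taylor expansion.} For $x,y\in[X]$ we have $1\le xy\le X^2$. The shifted sum $\sum_{a=N+1}^{2N}e(\mathfrak h(a+xy))$ differs from $\sum_{a=N+1}^{2N}e(\mathfrak h(a))$ by at most $2X^2$ terms. Averaging over $(x,y)\in[X]^2$ therefore costs $2X^2/N$ after normalization. Since $X^2\le N$, all arguments $a+xy$ lie in $(N+1,3N]$. Taylor's theorem gives
\[
\mathfrak h(a+xy)=\sum_{j=0}^k c_j(a)(xy)^j+R,
\]
with $|R|\le X^{2(k+1)}\sup_{\theta}|\mathfrak h^{(k+1)}(\theta)|/(k+1)!$. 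Using $|e(u)-e(v)|\le2\pi|u-v|$ yields the second error term. The constant term $c_0(a)$ is a unimodular factor and drops out in absolute value. It then remains to show, for each fixed $a$,
\[
\frac{1}{X^2}\Big|\sum_{x,y\in[X]}e\Big(\sum_{j=1}^k c_jx^jy^j\Big)\Big|\le J_{s,k}(X)^{\frac{1}{2s^2}}\Delta_I(a)^{\frac{1}{4s^2}}s^{\frac{k}{2s^2}}X^{\frac{k(k+1)}{4s^2}-\frac1s}.
\]

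\textbf{Step 2: two Hölder steps.} First apply Hölder in $x$ with exponent $2s$. Then expand $|\sum_y e(\cdot)|^{2s}$ as $\sum_{\boldsymbol\lambda}r(\boldsymbol\lambda)e(\sum_j c_j\lambda_jx^j)$. Here $\lambda_j=\sum_{i\le s}(y_i^j-z_i^j)$ satisfies $|\lambda_j|\le sX^j$, and $r(\boldsymbol\lambda)$ counts the representations. Next apply Hölder in $\boldsymbol\lambda$, again with exponent $2s$ in the shape
\[
\sum r|S|\le\Big(\sum r\Big)^{1-\frac1s}\Big(\sum r^2\Big)^{\frac1{2s}}\Big(\sum|S|^{2s}\Big)^{\frac1{2s}}.
\]
This uses $\sum r=X^{2s}$, $\sum r^2=J_{s,k}(X)$, and $S(\boldsymbol\lambda)=\sum_x e(\sum_j c_j\lambda_jx^j)$, with $\boldsymbol\lambda$ extended to the full box $\prod_j[-sX^j,sX^j]$. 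Expanding $\sum_{\boldsymbol\lambda}|S(\boldsymbol\lambda)|^{2s}$ in $x$-tuples $(\boldsymbol x,\boldsymbol w)$ produces the differences $\mu_j=\sum_i(x_i^j-w_i^j)$, again with $|\mu_j|\le sX^j$. Each fixed value of $\boldsymbol\mu$ is attained at most $J_{s,k}(X)$ times; this follows by Cauchy--Schwarz, since the number of representations of $\boldsymbol\mu$ is a correlation of representation functions. The $\boldsymbol\lambda$-sum factorizes into one-dimensional geometric sums, each bounded by $\min(2sX^j+1,\|c_j\mu_j\|^{-1})$.

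\textbf{Step 3: reciprocal sums.} For $j\notin I$ I bound the geometric sum trivially by about $sX^j$ and sum over $\mu_j$ trivially. For $j\in I$ I use the standard lemma
\[
\sum_{|m|\le M}\min\big(M,\|cm\|^{-1}\big)\lesssim(|c|M+1)\big(M+|c|^{-1}\big)\log(3M).
\]
With $M\simeq sX^j$ this is at most $sX^{2j}\big(|c_j|+|c_j|^{-1}X^{-2j}\big)\cdot10\log(3sX)$, up to absorbing the case $|c_j|\ge1$ by periodicity. Multiplying over $j\in I$ gives $\Delta_I(a)$. Collecting the powers of $X$ and $s$ through the two Hölder exponents then yields the claimed inequality. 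The exponent $1/(4s^2)$ comes from composing the two $2s$-th roots, and $J$ enters once through $\sum r^2$ and once through the $\boldsymbol\mu$-count, which gives $1/(2s^2)$.

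I expect the main obstacle to be the bookkeeping in Step 3. The trivially bounded coordinates contribute $\prod_{j}(sX^j)^2\approx s^{2k}X^{k(k+1)}$, and this must combine with the powers $X^{2s}$ and $X^{2s-1}$ from the Hölder steps to give exactly $X^{\frac{k(k+1)}{4s^2}-\frac1s}$ and $s^{\frac{k}{2s^2}}$. The regime where $|c_j|$ is large, handled by reducing $c_j$ modulo $1$, must also be covered. If the constants do not match precisely, I would relax them using the generous factor $(10k\log(3sX))^k$ built into $\Delta_I$.
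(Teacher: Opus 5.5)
Your route is the standard Vinogradov argument that the paper does not reprove but cites from [DMW, Section~3] and Iwaniec--Kowalski \S 8.5: shift by $xy$, Taylor expand, H\"older in $x$, expand, H\"older in $\boldsymbol\lambda$, bound the $\boldsymbol\mu$-multiplicities by $J_{s,k}(X)$, then use reciprocal sums for $j\in I$. Your Step~1 correctly produces the error terms $2X^2/N$ and $2\pi X^{2(k+1)}\sup|\mathfrak h^{(k+1)}/(k+1)!|$.

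\textbf{The gap.} Step~2 rests on a false identity: $\sum_{\boldsymbol\lambda}r(\boldsymbol\lambda)^2$ is not $J_{s,k}(X)$. Since $r(\boldsymbol\lambda)$ counts pairs $(\mathbf y,\mathbf z)\in[X]^s\times[X]^s$, the sum $\sum_{\boldsymbol\lambda}r(\boldsymbol\lambda)^2$ counts quadruples $(\mathbf y,\mathbf z,\mathbf y',\mathbf z')$ with $\sum_i\big(y_i^j+(z_i')^j\big)=\sum_i\big(z_i^j+(y_i')^j\big)$ for all $j\in[k]$. So it equals $J_{2s,k}(X)$; only $r(\mathbf 0)$ equals $J_{s,k}(X)$.

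This is not a harmless slip. With your identity the second H\"older step gives $X^{2s-2}J_{s,k}(X)^{1/(2s)}\big(\sum_{\boldsymbol\lambda}|S(\boldsymbol\lambda)|^{2s}\big)^{1/(2s)}$. Your bookkeeping then ends at $X^{\frac{k(k+1)}{4s^2}-\frac{3}{2s}}$ rather than $X^{\frac{k(k+1)}{4s^2}-\frac{1}{s}}$, and that inequality is false. Take $k=s=1$, $I=\emptyset$, $c_1=0$: the reduced inequality from your Step~1 would read $1\le(10\log(3X))^{1/4}X^{-1/2}$, using $J_{1,1}(X)=X$.

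\textbf{The repair.} It uses a fact you already invoke. By the same Cauchy--Schwarz argument as for the $\boldsymbol\mu$-count, $r(\boldsymbol\lambda)\le r(\mathbf 0)=J_{s,k}(X)$. Hence $\sum_{\boldsymbol\lambda}r(\boldsymbol\lambda)^2\le X^{2s}J_{s,k}(X)$, and
\[
\sum_{\boldsymbol\lambda}r(\boldsymbol\lambda)|S(\boldsymbol\lambda)|\le X^{2s-1}J_{s,k}(X)^{1/(2s)}\Big(\sum_{\boldsymbol\lambda}|S(\boldsymbol\lambda)|^{2s}\Big)^{1/(2s)}.
\]
Write $T$ for the double sum over $x,y\in[X]$. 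Using the Step~3 bound $\sum_{\boldsymbol\lambda}|S(\boldsymbol\lambda)|^{2s}\le J_{s,k}(X)\,s^{2k}X^{k(k+1)}\Delta_I(a)$, you get
\[
|T|^{2s}\le X^{4s-2}J_{s,k}(X)^{1/s}\big(s^{2k}X^{k(k+1)}\Delta_I(a)\big)^{1/(2s)}.
\]
After taking a $2s$-th root and dividing by $X^2$, this is exactly the claimed bound. It is the factor $X^{2s-1}$ appearing twice, once from each H\"older step, that produces $-\frac1s$.

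\textbf{Smaller corrections in Step~3.}
\begin{enumerate}
\item For $j\in I$ the reciprocal sum is $\lesssim s^2X^{2j}\big(|c_j|+|c_j|^{-1}X^{-2j}\big)\log(3sX^j)$. You wrote $sX^{2j}$, but the $s^2$ per coordinate is what yields $s^{k/(2s^2)}$.
\item The bound $\log(3sX^j)\le j\log(3sX)\le k\log(3sX)$ is where the $k$ in $(10k\log(3sX))^k$ is needed.
\item The numerical constants of all $k$ coordinates are absorbed into that prefactor, including $(2sX^j+1)^2\le 9s^2X^{2j}$ for $j\notin I$. This is why its power is $k$ rather than $|I|$.
\item No periodicity argument is needed for large $c_j$: if $|c_j|\ge 1/2$, the trivial bound $9s^2X^{2j}\le 18s^2X^{2j}|c_j|$ already suffices.
\end{enumerate}
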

\begin{proof}
For a proof, we direct the reader to Section~3 in \cite{DMW} where all the generic steps of Vinogradov's method are provided and the estimate  \eqref{vinmethod} is  established. See also subsection~8.5 in \cite{IWKO} for an exposition of the method.
\end{proof}
\begin{lemma}\label{elementarybutdifficult}
Assume $f\colon \mathbb{T}\to\mathbb{R}$ is a smooth function such that $f'$ does not have a root of infinite order. Then $f$ has finitely many roots and for every $\delta>0$ and $C_1>0$ there exists $C_2=C_2(f,C_1,\delta)>0$ such that for every $N\ge C_2$ we have
\begin{equation}
\big\{x\in\mathbb{T}:\,|f(x)|<C_1N^{-\delta}\big\}=\bigcup_{\substack{x_0\in\mathbb{T}\\f(x_0)=0}}I_{x_0}\text{,}
\end{equation}
where $\{I_{x_0}:\, x_0\in\mathbb{T}\text{ with }f(x_0)=0\}$ is a collection of mutually disjoint intervals in $\mathbb{T}$ such that each $I_{x_0}$ contains $x_0$ and has length at most $C_2N^{-\delta/m_{x_0}}$, with $m_{x_0}\in\mathbb{N}$ being the order of the root, namely, 
\begin{equation}\label{mult}
m_{x_0}\coloneq \min\big(\{j\in\mathbb{N}:\,f^{(j)}(x_0)\neq 0\}\big)\text{.}
\end{equation}
In particular, if $f$ has no roots, then for every $N\ge C_2$ we have that $\big\{x\in\mathbb{T}:\,|f(x)|<C_1N^{-\delta}\big\}=\emptyset$, and in either case we have that there exist $\nu=\nu(f,\delta)>0$ and $\widetilde{C}_2=\widetilde{C}_2(f,C_1,\delta)>0$ such that for every $N\ge C_2$ we have
\begin{equation}\label{alltogethernew}
\lambda_{\mathbb{T}}\big(\big\{x\in\mathbb{T}:\,|f(x)|<C_1N^{-\delta}\big\}\big)\le \widetilde{C}_2N^{-\nu}\text{.}
\end{equation}
\end{lemma}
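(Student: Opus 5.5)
We prove Lemma~\ref{elementarybutdifficult} by first analysing the zero set of $f$ and then using local Taylor expansions around each root; compactness of $\mathbb{T}$ handles everything away from the roots.

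\textbf{Step 1: finiteness of roots and finiteness of their orders.} Suppose first that $f$ had infinitely many roots. By compactness they accumulate at some $x_*\in\mathbb{T}$. By continuity $f(x_*)=0$. By Rolle's theorem, between consecutive roots there are roots of $f'$, which also accumulate at $x_*$, so $f'(x_*)=0$. Iterating, $f^{(j)}(x_*)=0$ for every $j\ge1$. Hence $x_*$ is a root of infinite order of $f'$, a contradiction. So the roots $x_1,\dotsc,x_L$ of $f$ are finitely many. Next, for each root $x_0$, the order $m_{x_0}$ in \eqref{mult} is finite, since otherwise $x_0$ would be a root of infinite order of $f'$. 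Let $M\coloneq\max_{x_0}m_{x_0}$ (and $M\coloneq1$ if there are no roots).

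\textbf{Step 2: local structure near each root.} Fix a root $x_0$ of order $m=m_{x_0}$. Taylor's theorem gives $f(x)=\frac{f^{(m)}(x_0)}{m!}(x-x_0)^m(1+O(|x-x_0|))$. Hence there is $\eta>0$, which we take smaller than half the minimal distance between distinct roots, such that on $J_{x_0}\coloneq(x_0-\eta,x_0+\eta)$ the following hold:
\begin{itemize}
\item $|f(x)|\ge c|x-x_0|^m$ for some $c>0$;
\item $f$ is strictly monotone on each of $(x_0-\eta,x_0]$ and $[x_0,x_0+\eta)$. This holds because $f'(x)=\frac{f^{(m)}(x_0)}{(m-1)!}(x-x_0)^{m-1}(1+O(|x-x_0|))$ does not vanish for $x\neq x_0$ in $J_{x_0}$.
\end{itemize}
Consequently $|f|$ is increasing in $|x-x_0|$ on each side. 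It follows that the set $\{x\in J_{x_0}:|f(x)|<t\}$ is an interval containing $x_0$, and for small $t$ it has length at most $2(t/c)^{1/m}$.

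\textbf{Step 3: away from the roots, and choice of $C_2$.} On the compact set $K\coloneq\mathbb{T}\setminus\bigcup_{x_0}J_{x_0}$ the function $|f|$ has a positive minimum $\kappa$. (If $f$ has no roots, $K=\mathbb{T}$.) Choose $C_2$ so large that for all $N\ge C_2$ we have:
\begin{itemize}
\item $C_1N^{-\delta}<\kappa$;
\item $C_1N^{-\delta}<\min_{x_0}|f(x_0\pm\eta)|$, so that the sublevel set inside each $J_{x_0}$ stays strictly inside $J_{x_0}$;
\item $C_2\ge 2(C_1/c)^{1/m}$ for every root order $m$.
\end{itemize}
Then for $N\ge C_2$ the set $\{|f|<C_1N^{-\delta}\}$ is the disjoint union of the intervals $I_{x_0}\coloneq\{x\in J_{x_0}:|f(x)|<C_1N^{-\delta}\}$. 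Each $I_{x_0}$ has length at most $2(C_1/c)^{1/m_{x_0}}N^{-\delta/m_{x_0}}\le C_2N^{-\delta/m_{x_0}}$. The empty case follows because then $K=\mathbb{T}$. Finally, \eqref{alltogethernew} holds with $\nu=\delta/M$ and $\widetilde C_2=LC_2$.

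\textbf{Main obstacle.} The only delicate point is Step 1: one must derive a contradiction from an accumulation of roots. Iterating Rolle's theorem shows that an accumulation point is a root of infinite order of $f'$, which the hypothesis rules out. Everything after that is routine Taylor estimates combined with compactness.
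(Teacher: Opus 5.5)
Your proof is correct. Step 1 is the same accumulation-plus-Rolle argument the paper uses; Steps 2--3 take a different, more local route.

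\emph{The paper's route.} It first proves that $f'$ (not only $f$) has finitely many zeros, so that $c(f)=\min\{|f(x)|:\ f'(x)=0,\ f(x)\neq 0\}$ is positive. Once $C_1N^{-\delta}\le c(f)/2$, it splits $\{|f|<C_1N^{-\delta}\}$ into maximal open intervals. It then shows each interval contains exactly one root, since otherwise it would contain a local extremum of $f$ that is not a root. The length bound comes from the maximal monotonicity interval around each root $x_0$, on which the quotient $f(x)/(x-x_0)^{m_{x_0}}$ is continuous and nonvanishing.

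\emph{Your route.} You take small neighbourhoods $(x_0-\eta,x_0+\eta)$, on which one-sided monotonicity and $|f(x)|\ge c|x-x_0|^{m_{x_0}}$ both follow directly from Taylor's formula. Compactness of the complement $K$ then confines the sublevel set to these neighbourhoods once $C_1N^{-\delta}<\kappa$. This avoids the global analysis of critical points and the component-counting argument. In fact, your argument as a whole only needs every zero of $f$ to have finite order: such zeros are isolated, hence finite in number by compactness. The paper's route, by contrast, uses finiteness of the critical set of $f$. The only cost is that your threshold $\kappa$ depends on the auxiliary choice of $\eta$ rather than on the critical values of $f$, which is irrelevant for the application.

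\emph{Minor points.}
\begin{itemize}
\item Your second bullet in Step 3 already follows from the first, because $x_0\pm\eta\in K$.
\item You should take $\eta<1/2$, which matters when $f$ has a single root, so that each neighbourhood is a genuine arc of $\mathbb{T}$.
\item You should take $C_2\ge 1$, so that $N^{-\delta/m_{x_0}}\le N^{-\delta/M}$ holds for $N\ge C_2$.
\end{itemize}
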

\begin{proof}
This lemma is elementary, but its complete proof is quite long. We provide all the details below but we encourage the reader to skip to the the beginning of the proof of Proposition~$\ref{minarcprop}$ in their first reading.

Firstly, we show that $f'$ has finitely many roots. To see this, let us identify $\mathbb{T}$ with $ [0,1)$ and assume for the sake of a contradiction that $f'$ has infinitely many roots $(x_n)_{n\in\mathbb{N}}$. Then by compactness and standard arguments, there exists a strictly monotone subsequence $(x_{n_k})_{k\in\mathbb{N}}$ and $x_0\in[0,1)$ such that $\lim_{k\to\infty}\|x_{n_k}- x_0\|=0$, and by continuity $f'(x_0)=0$. By the Mean Value Theorem applied to the intervals with endpoints $x_{n_k}$ and $x_{n_{k+1}}$, we deduce the existence of a strictly monotone sequence $(y_{n_k})_{k\in\mathbb{N}}$ such that $\|y_{n_k}-x_0\|\to 0$ and $f''(y_{n_k})=0$ for every $k\in\mathbb{N}$, and again, by the continuity of $f''$ we get that $f''(x_0)=0$. Inductively we may show that $f^{(j)}(x_0)=0$ for every $j\in\mathbb{N}$ and thus deduce the existence of a root of infinite order for $f'$, which is a contradiction. Thus $f'$ has finitely many roots.

A similar argument shows that $f$ has finitely many roots, since otherwise, by contradiction and using the Mean Value Theorem $f'$ would also have infinitely many roots. Also, it is easy to see that each of these finitely many roots must have finite order since every root of $f'$ has finite order, and thus \eqref{mult} is well-defined.

Secondly, we note that the set $S_f\coloneq \big\{|f(x)|:\,x\in\mathbb{T}\text{ is such that }f'(x)=0\text{ and }f(x)\neq 0\big\}$ is finite (and by periodicity and our assumption nonempty), and we therefore get that
\[
\min(S_f)\eqcolon c(f)>0\text{.}
\] 
Note that $c(f)/2$ is a positive real number smaller than any nonzero local minimum value of the function $|f(x)|$.

Fix $\delta>0$ and $C_1>0$, and note that there exists $C_2'=C_2'(f,C_1,\delta)>0$ such that  $C_1(C_2')^{-\delta}\le c(f)/2$, and thus for every $N\ge C_2'$ we have that
\begin{equation}\label{nolocal}
\big\{x\in\mathbb{T}:\,|f(x)|<C_1N^{-\delta}\big\}\cap \{x\in\mathbb{T}:\,f'(x)=0\text{ and }f(x)\neq 0\}=\emptyset\text{,}
\end{equation}
and, in particular, for $N\ge C_2'$ the first set above does not contain any local minimum or  local maximum $x$ of $f$ for which $f(x)\neq0$.

If $f$ has no roots, by the previous argument we get that for every $N\ge C_2'$ the set $\big\{x\in\mathbb{T}:\,|f(x)|<C_1N^{-\delta}\big\}=\emptyset$, since the minimum of $|f|$ is then nonzero and corresponds to a local maximum or local minimum of $f$. So let us assume from now on that $f$ has at least one root.

By the continuity of $f$, the set $\{x\in\mathbb{T}:\,|f(x)|<C_1N^{-\delta}\}$ can be written as a union of mutually disjoint at most countably many open intervals in $\mathbb{T}$ of maximal length $(I_s)_{s\in \mathcal{I}}$, that is
\[
\{x\in\mathbb{T}:\,|f(x)|<C_1N^{-\delta}\}=\bigcup_{s\in \mathcal{I}}I_s\text{.}
\] 
By the maximality of the intervals, any endpoint $x_{\text{endpt}}$ of $I_s$ must be such $|f(x_{\text{endpt}})|=C_1N^{-\delta}$, and thus for $N\ge C_2'$ every interval $I_s$ must contain a root of $f$, since if we assume not, then it contains a local maximum or a local minimum which is not a root, contradicting \eqref{nolocal}. Finally, every $I_s$ must have exactly one root, since if it has two, again it must contain a local maximum or minimum which is not a root, contradicting \eqref{nolocal}.

Thus, letting $\mathcal{R}_f\coloneq\{x_0\in\mathbb{T}:\,f(x_0)=0\}$ be the set of roots of $f$, we may reparametrize the family by choosing the unique root of every interval 
\[
\{x\in\mathbb{T}:\,|f(x)|<C_1N^{-\delta}\}=\bigcup_{x_0\in\mathcal{R}_f}I_{x_0}\text{,}
\]
and to complete the proof of the first assertion of the lemma it remains to bound the length of each $I_{x_0}$. Let 
\[
J_{x_0}\coloneq \big[x_0-\kappa_1(f,x_0),x_0+\kappa_2(f,x_0)\big]
\]
where $\kappa_1,\kappa_2$ are maximally chosen so that $f$ is monotone in $[x_0-\kappa_1,x_0]$ and $[x_0,x_0+\kappa_2]$. Note that $J_{x_0}$ does not depend on $N$, and that $I_{x_0}\subseteq J_{x_0}$, since $I_{x_0}$ does not have a local minimum or local maximum which is not a root. Also, note that the function $g\colon J_{x_0}\to \mathbb{R}$ given by  
\[
g(x)\coloneq\frac{f(x)}{(x-x_0)^{m_{x_0}}}
\]
is well-defined by L'H\^opital, it is never $0$ and it is continuous. Thus its absolute value has a nonzero minimum (independent of $N$), namely, there exists $c_{x_0}=c_{x_0}(f)\in(0,1)$ such that $|f(x)|\ge c_{x_{0}}(x-x_0)^{m_0}$ for every $x\in J_{x_0}$, and in particular, for every $x\in I_{x_0}$. Thus if $I_{x_0}=(x_1,x_2)$, we have  for $i\in[2]$ that
\[
|x_i-x_0|\le \bigg(\frac{2C_1}{c_{x_0}}\bigg)^{1/m_{x_0}} N^{-\delta/m_{x_0}}  
\]
since otherwise $|f(x_i)|\ge c_{x_0}|x_i-x_0|^{m_{x_0}}> 2C_1N^{-\delta}$, which is a contradiction. Thus 
\[
|I_{x_0}|\le2\bigg(\frac{2C_1}{c_{x_0}}\bigg)^{1/m_{x_0}} N^{-\delta/m_{x_0}}\text{,}  
\]
and letting $C_2\coloneq \max\Big(\big\{C_2'\big\}\cup\Big\{2\Big(\frac{2C_1}{c_{x_0}}\Big)^{\frac{1}{m_{x_0}}}:\,x_0\in\mathcal{R}_f\Big\}\Big)>0$ yields the first assertion of the lemma.

The second assertion of the lemma follows easily from the first. In the case where $\mathcal{R}_f=\emptyset$ the estimate \eqref{alltogethernew} holds for any choice of $\widetilde{C}_2$ and $\nu$ since $\big\{x\in\mathbb{T}:\,|f(x)|<C_1N^{-\delta}\big\}=\emptyset$ for $N\ge C_2$. In the nontrivial case when $\mathcal{R}_f\neq\emptyset$ , we take $\nu=\delta/m_*>0$, where $m_*\coloneq \max(\{m_{x_0}:\,x_0\in\mathcal{R}_f\})$, and $\widetilde{C}_2\coloneq C_2|\mathcal{R}_f|$, and we immediately see that for every $N\ge C_2$ we get
\begin{equation}
\lambda_{\mathbb{T}}\big(\big\{x\in\mathbb{T}:\,|f(x)|<C_1N^{-\delta}\big\}\big)=\sum_{x_0\in\mathcal{R}_f}|I_{x_0}|\le \widetilde{C}_2N^{-\nu}\text{,}\quad\text{and the proof is complete.}
\end{equation}
\end{proof}
\subsection{Main minor arc estimate.} We are now ready to give the proof of Proposition~$\ref{minarcprop}$.

\begin{proof}[Proof of Proposition~$\ref{minarcprop}$]
We fix $\psi,\alpha,P,\delta$ as in the hypotheses of the theorem and all implicit constants may depend on them. In particular, they are allowed to depend on $\omega\coloneq\omega(\alpha)$ and $d\coloneq\deg(P)$. It clearly suffices to prove the result for $N\in\mathbb{N}$ and $N\gtrsim 1$.
\smallskip
\paragraph{\textbf{Step 1: Diophantine approximation of $\alpha$ and passing to arithmetic progressions.}} We begin by carefully choosing some parameters. We fix $\rho=\rho(\omega,d)$ such that  
\begin{equation}\label{rho1}
1-\frac{1}{\omega}<\rho<\frac{d+1}{d+\omega}\text{.}
\end{equation}
To see why this is possible, we note that
\begin{equation}
\begin{split}
1-\frac{1}{\omega}<\frac{d+1}{d+\omega}\iff& \omega(d+\omega)-\frac{\omega(d+\omega)}{\omega}<\frac{\omega(d+\omega)(d+1)}{d+\omega}
\\
\iff& \omega d+\omega^2-d-\omega<\omega d+\omega\iff \omega^2-2\omega+1<d+1
\\
\iff& 1-\sqrt{d+1}<\omega<1+\sqrt{d+1}\text{,}
\end{split}
\end{equation} 
which is true, since by our assumption we have that $2\le \omega<1+\sqrt{d+1}$. We may take $\rho=\rho(\omega,d)$ as the center of the nonempty interval $\big(1-\frac{1}{\omega},\frac{d+1}{d+\omega}\big)$. We immediately get that $\rho\in(1/2,1)$ since $\omega\ge 2$.

For every $N\in\mathbb{N}$ by Dirichlet's approximation theorem there exists 
a pair of integers $(p,q)=(p(N),q(N))\in\mathbb{Z}\times\mathbb{N}$ with $\gcd(p,q)=1$ and $q\le N^{\rho}$, such that
\begin{equation}\label{Dirapprox}
\big|\alpha-p/q\big|\le \frac{1}{qN^{\rho}}\text{.}
\end{equation}
Since $\rho$ is fixed so that the inequalities in \eqref{rho1} are strict, we can fix $\omega'=\omega'(\omega,d)>\omega$ such that 
\begin{equation}\label{rho'1}
1-\frac{1}{\omega'}<\rho<\frac{d+1}{d+\omega'}\text{.}
\end{equation}

By Lemma~$\ref{expbreath}$ there exists $c=c(\alpha,\omega')\in(0,1)$ such that
\begin{equation}\label{irexponentbound}
\big|\alpha-a/b\big|\ge \frac{c}{b^{\omega'}}\quad\text{for every $(a,b)\in\mathbb{Z}\times\mathbb{N}$ with $\gcd(a,b)=1$. }
\end{equation}
Combining $\eqref{Dirapprox}$ and \eqref{irexponentbound} we immediately get that
\begin{equation}\label{qlowerbound}
q\gtrsim N^{\frac{\rho}{\omega'-1}}\text{,}
\end{equation}
since
\[
\frac{1}{qN^{\rho}}\ge\frac{c}{q^{\omega'}}\Longrightarrow q^{\omega'-1}\gtrsim N^{\rho}\Longrightarrow q\gtrsim\footnote{Implicit constants may depend on our fixed parameters $\psi,\alpha,P,\delta$ without further mention. This means that whenever we introduce additional parameters \textit{depending only on} $\psi,\alpha,P,\delta$, for example $\rho$ or $\omega'$, we may treat them as fixed as well, and all implicit constants may depend on them.} N^{\frac{\rho}{\omega'-1}}\text{.}
\]
Letting $\theta=\theta(N)\coloneq \alpha-\frac{p}{q}$, we collect the following upper bounds
\begin{equation}\label{upperboundsfortheta}
|\theta|\le \frac{1}{qN^{\rho}}\lesssim N^{-\rho-\frac{\rho}{\omega'-1}}\text{,}\quad q|\theta|\le N^{-\rho}\text{,}\quad N|\theta|\lesssim N^{1-\rho-\frac{\rho}{\omega'-1}}\text{,}
\end{equation}
as well as the lower bound
\begin{equation}\label{lowerboundfortheta}
|\theta|\gtrsim q^{-\omega'}\ge N^{-\rho\omega'}\text{.}
\end{equation}
Crucially, as will become apparent later, since $\rho$ and $\omega'$ are fixed so that $\rho>1-\frac{1}{\omega'}$, we get that the final upper bound in \eqref{upperboundsfortheta}  yields a power saving in $N$, since
\begin{equation}\label{taubounds}
-1<1-\rho-\frac{\rho}{\omega'-1}<1-(1-1/\omega')-\frac{1-1/\omega'}{\omega'-1}=\frac{1}{\omega'}-\frac{1}{\omega'-1}+\frac{1}{\omega'(\omega'-1)}=0\text{.}
\end{equation}
Thus letting
\begin{equation}\label{taudef}
\tau\coloneq \rho+\frac{\rho}{\omega'-1}-1\text{,}
\end{equation} 
we see by \eqref{taubounds} that $\tau\in(0,1)$ and the final upper bound in \eqref{upperboundsfortheta} becomes
\begin{equation}\label{thetauseful}
N|\theta|\lesssim N^{-\tau}\text{.}
\end{equation}

We now split the summation over classes modulo $q$ and estimate as follows
\begin{equation}
\begin{split}
\Big|\sum_{n=N+1}^{2N}&e(\xi P(n)\psi(\alpha n))\Big|=\Big|\sum_{r=0}^{q-1}\sum_{\frac{N+1-r}{q}\le m\le \frac{2N-r}{q}}e\big(\xi P(qm+r)\psi(\alpha(qm+r))\big)\Big|
\\
\le&\Big|\sum_{r=0}^{q-1}\sum_{\frac{N}{q}\le m\le \frac{2N}{q}}e\big(\xi P(qm+r)\psi(\alpha(qm+r))\big)\Big|
+\sum_{r=0}^{q-1}\sum_{\frac{2N-r}{q}< m\le \frac{2N}{q}}1+\sum_{r=0}^{q-1}\sum_{\frac{N+1-r}{q}\le m< \frac{N}{q}}1\text{.}
\end{split}
\end{equation}
The second summand can be immediately estimated
\begin{equation}
\sum_{r=0}^{q-1}\sum_{\frac{2N-r}{q}< m\le \frac{2N}{q}}1+\sum_{r=0}^{q-1}\sum_{\frac{N+1-r}{q}\le m< \frac{N}{q}}1\le2\sum_{r=0}^{q-1}\bigg(\frac{r}{q}+1\bigg)\le 4q\le 4N^{\rho}\text{,}
\end{equation}
and since $\rho$ is strictly smaller than $1$ we may focus on the first summand. Using the periodicity of $\psi$ as in \eqref{psiperiod}we obtain the following
\begin{equation}\label{S1outcome}
\begin{split}
\Big|\sum_{n=N+1}^{2N}e(\xi P(n)\psi(\alpha n))\Big|\le\sum_{r=0}^{q-1}\bigg|\sum_{\frac{N}{q}\le m\le \frac{2N}{q}}e\Big(\xi P(qm+r)\psi\big(\theta(qm+r)+rp/q\big)\Big)\bigg|+4N^{\rho}\text{.}
\end{split}
\end{equation}
\smallskip
\paragraph{\textbf{Step 2: Derivative estimates in preparation for Vinogradov's method.}} We wish to apply the Vinogradov method for estimating exponential sums for ``most'' of the inner sums above, and the trivial estimate for the rest. The determining factor of whether an inner exponential sum falls in the former or the latter category will be whether we have appropriate control for a lot of its derivatives, and to make this distinction precise, we carefully calculate the derivatives of the function in the phase with respect to $m$. For every $x\in\mathbb{R}$ we define 
\begin{equation}
F(x)=F(x;N,\xi,r)\coloneq \xi P(qx+r)\psi\big(\theta(qx+r)+rp/q\big)\text{.} 
\end{equation}
We remind the reader that although we avoid indicating so, the values of $p,q,\theta$ are functions of $N$. For every $x\in\mathbb{R}$ and $j\ge d$ we have
\begin{equation}
\begin{split}
F^{(j)}(x)=&\sum_{i=0}^j\binom{j}{i}\big(\xi P(qx+r)\big)^{(i)}\Big(\psi\big(\theta(q x+r)+rp/q\big)\Big)^{(j-i)}
\\
=&\xi \sum_{i=0}^d\binom{j}{i}q^iP^{(i)}(qx+r)\theta^{j-i}q^{j-i}\psi^{(j-i)}\big(\theta(q x+r)+rp/q\big)
\\
=&\xi q^j \sum_{i=0}^d\binom{j}{i}P^{(i)}(qx+r)\theta^{j-i}\psi^{(j-i)}\big(\theta(q x+r)+rp/q\big)\text{,}
\end{split}
\end{equation}
where all the derivatives are taken with respect to $x$.

To keep our implicit constants depending only on the choices $\alpha,P,\psi,\delta$, it will be useful to fix
\begin{equation}\label{s_0choice}
d_0\coloneq\max\bigg(\lfloor d\omega'+4(\omega'-1)\rfloor+2,\bigg\lfloor\frac{d+2}{\tau}\bigg\rfloor+1+d\bigg)\text{,}
\end{equation}
and work only with $j\in[d,d_0]\cap\mathbb{N}$. The reader should ignore at this stage the precise choice, since its utility is evident only in the last steps of the proof, where $d_0-1$ will be the order of the Taylor approximation in Vinogradov's method, see \eqref{choicesvinogradov}. The important aspect at this point is that it is fixed, in the sense that all implicit constants can depend on $d_0$. Let us note, however, that
\begin{equation}\label{lowerboundd0}
d_0\ge 2d+3\text{,}\quad\text{since $\tau\in(0,1)$, and thus $[d,d_0]\cap\mathbb{N}\neq \emptyset$.}
\end{equation} 
 
For every $r\in\{0,\dotsc,q-1\}$,  $x\in[N/q,4N/q]$ and $j\in[d,d_0]\cap\mathbb{N}$ we have
\begin{equation}\label{upperboundderivative}
\begin{split}
|F^{(j)}(x)|\le&|\xi| q^j \sum_{i=0}^d\binom{j}{i}|P^{(i)}(qx+r)||\theta|^{j-i}\|\psi^{(j-i)}\|_{L^{\infty}}
\\
\lesssim&|\xi| q^j\sum_{i=0}^d\binom{j}{i}N^{d-i}|\theta|^{j-i}
\lesssim|\xi| q^j\sum_{i=0}^d\binom{j}{i}|\theta|^{-d+i}|\theta|^{j-i}\lesssim |\xi| q^j|\theta|^{j-d}
\text{,}
\end{split}
\end{equation}
where we have used that $N\lesssim |\theta|^{-1}$ by \eqref{thetauseful}, as well as the fact that $\|\psi^{(l)}\|_{L^{\infty}}<\infty$ for every $l\in\mathbb{N}_0$ since $\psi$ is periodic and smooth.

On the other hand, for every $r\in\{0,\dotsc,q-1\}$, $x\in[N/q,4N/q]$ and $j\in[ d,d_0]\cap\mathbb{N}$ we have that
\begin{equation}\label{pickmainterm}
\begin{split}
|F^{(j)}(x)|=&|\xi| q^j \bigg|\sum_{i=0}^d\binom{j}{i}P^{(i)}(qx+r)\theta^{j-i}\psi^{(j-i)}\big(\theta(q x+r)+rp/q\big)\bigg|
\\
\ge&|\xi| q^j\bigg(\binom{j}{d}|P^{(d)}(qx+r)||\theta|^{j-d}\big|\psi^{(j-d)}\big(\theta(q x+r)+rp/q\big)\big|
\\
&-\bigg|\sum_{i=0}^{d-1}\binom{j}{i}P^{(i)}(qx+r)\theta^{j-i}\psi^{(j-i)}\big(\theta(q x+r)+rp/q\big)\bigg|\bigg)
\text{,}
\end{split}
\end{equation}
and it is not difficult to estimate the second summand above as follows
\begin{equation}
\begin{split}
\bigg|\sum_{i=0}^{d-1}\binom{j}{i}P^{(i)}(qx+r)\theta^{j-i}\psi^{(j-i)}\big(\theta(q x+r)+rp/q\big)\bigg|
\lesssim \sum_{i=0}^{d-1}N^{d-i}|\theta|^{j-i}\lesssim |\theta|^{j-d}N^{-\tau}\text{,}
\end{split}
\end{equation}
since for every $0\le i\le d-1$ we have $N^{d-i}|\theta|^{j-i}\lesssim |\theta|^{j-d}N^{-\tau}$ because $N^{d-i}|\theta|^{d-i}\lesssim N^{-(d-i)\tau}\le N^{-\tau}$.

For the first summand  we have that
\begin{equation}
\binom{j}{d}|P^{(d)}(qx+r)||\theta|^{j-d}\big|\psi^{(j-d)}\big(\theta(q x+r)+rp/q\big)\big|\gtrsim|\theta|^{j-d}\big|\psi^{(j-d)}\big(\theta(q x+r)+rp/q\big)\big|\text{.}
\end{equation}
A careful analysis is now necessary and we abandon momentarily the asymptotic notation `` $\lesssim$ ''. We have shown that there exist two positive constants $c_1=c_1(\alpha,P,\psi,\delta)$ and $C_2=C_2(\alpha,P,\psi,\delta)$  such that for every $x\in[N/q,4N/q]$, $j\in[d,d_0]\cap\mathbb{N}$ and $r\in\{0,\dotsc,q-1\}$ we have
\begin{equation}\label{upperfirst}
\binom{j}{d}|P^{(d)}(qx+r)||\theta|^{j-d}\big|\psi^{(j-d)}\big(\theta(q x+r)+rp/q\big)\big|\ge c_1 |\theta|^{j-d}\big|\psi^{(j-d)}\big(\theta(q x+r)+rp/q\big)\big|\text{,}
\end{equation}
and 
\begin{equation}\label{lowersecond}
\bigg|\sum_{i=0}^{d-1}\binom{j}{i}P^{(i)}(qx+r)\theta^{j-i}\psi^{(j-i)}\big(\theta(q x+r)+rp/q\big)\bigg|\le C_2|\theta|^{j-d}N^{-\tau}\text{.}
\end{equation}
We fix a small positive number $\sigma$ as follows
\begin{equation}\label{tau'choice}
\sigma=\sigma(\alpha,P,\psi,\delta)
\coloneq \min\bigg(\frac{\tau}{2},\frac{1-\rho}{4},\frac{\delta}{4},\frac{(d+1)-\rho(d+\omega')}{8}\bigg)>0\text{,}
\end{equation}
and, similarly to $d_0$, the usefulness of this choice (and specifically of the smallness of the last three arguments in the $\min$) is visible only in the last step of Vinogradov's method. 
The first three arguments are clearly positive, and for the last one we note that
\[
(d+1)-\rho(d+\omega')>0\iff\rho<\frac{d+1}{d+\omega'}\text{,}\quad\text{which holds, by \eqref{rho'1}.
}
\]
We let 
\begin{equation}\label{mainsetM}
\mathcal{M}(N)=\bigg\{r\in\{0,\dotsc,q-1\}:\,\min_{\substack{x\in[N/q,4N/q]\text{,}\\0\le i\le d_0}}\big|\psi^{(i)}\big(\theta(qx+r)+rp/q\big)\big|\ge \frac{2C_2}{c_1}N^{-\sigma}\bigg\}\text{,}
\end{equation}
and we let $\mathcal{E}(N)$ be its complement, i.e.: 
\begin{equation}\label{seterror}
\mathcal{E}(N)=\bigg\{r\in\{0,\dotsc,q-1\}:\,\min_{\substack{x\in[N/q,4N/q]\text{,}\\0\le i\le d_0}}\big|\psi^{(i)}\big(\theta(qx+r)+rp/q\big)\big|< \frac{2C_2}{c_1}N^{-\sigma}\bigg\}\text{.}
\end{equation}
This is precisely the splitting that was mentioned in the beginning of Step~2, see also \eqref{Step2end}. The advantage, is that for every $r\in\mathcal{M}(N)$, every $x\in[N/q,4N/q]$, and every $j\in[d,d_0]\cap\mathbb{N}$, by \eqref{pickmainterm}, \eqref{upperfirst} and \eqref{lowersecond}, we have that
\begin{equation}
\begin{split}
|F^{(j)}(x)|\ge|\xi|q^j\Big(c_1 |\theta|^{j-d}\big|\psi^{(j-d)}\big(\theta(q x+r)+rp/q\big)\big|-C_2|\theta|^{j-d}N^{-\tau}\Big)
\\
\ge |\xi|q^j\bigg(c_1 |\theta|^{j-d}\frac{2C_2}{c_1}N^{-\sigma}-C_2|\theta|^{j-d}N^{-\sigma}\bigg)=C_2|\xi|q^j |\theta|^{j-d}N^{-\sigma}\text{.}
\end{split}
\end{equation}
Thus, returning to our previous asymptotic notation and using \eqref{upperboundderivative}, we have shown that for every $r\in\mathcal{M}(N)$, $x\in[N/q,4N/q]$ and $j\in[d,d_0]\cap\mathbb{N}$ we have
\begin{equation}\label{derivativefull}
N^{-\sigma}\cdot \big(|\xi|q^j |\theta|^{j-d}\big)\lesssim |F^{(j)}(x)|\lesssim |\xi|q^j |\theta|^{j-d}\text{,}
\end{equation}
which will be the input used in the Vinogradov method.

Returning to \eqref{S1outcome} we therefore split the sum as follows
\begin{equation}\label{Step2end}
\begin{split}
\Big|\sum_{n=N+1}^{2N}e(\xi P(n)\psi(\alpha n))\Big|\le&\sum_{r=0}^{q-1}\bigg|\sum_{\frac{N}{q}\le m\le \frac{2N}{q}}e\Big(\xi P(qm+r)\psi\big(\theta(qm+r)+rp/q\big)\Big)\bigg|+4N^{\rho}
\\
\lesssim&\sum_{r\in\mathcal{M}(N)}\Big|\sum_{\frac{N}{q}\le m\le \frac{2N}{q}}e\big(F(m;N,\xi,r)\big)\Big|+N^{\rho}+\sum_{r\in\mathcal{E}(N)}Nq^{-1}
\\
=&\sum_{r\in\mathcal{M}(N)}\Big|\sum_{\frac{N}{q}\le m\le \frac{2N}{q}}e\big(F(m;N,\xi,r)\big)\Big|+N^{\rho}+N\cdot\frac{|\mathcal{E}(N)|}{q}
\text{,}
\end{split}
\end{equation}
and we have good control of a lot of derivatives of the phase in the exponential sums above, see \eqref{derivativefull}. In what follows, we prove that the last summand above is appropriately decaying, and then we perform the Vinogradov method to treat the exponential sums in the first summand.
\smallskip
\paragraph{\textbf{Step 3: Estimating the number of exceptional classes modulo $q$.}}
In this step we bound the last summand in the last line of \eqref{Step2end}. Firstly, note that
\begin{equation}
\mathcal{E}(N)\subseteq\bigcup_{i=0}^{d_0}\mathcal{E}_i(N)\text{,}
\end{equation}
where 
\begin{equation}
\mathcal{E}_i(N)\coloneq \bigg\{r\in\{0,\dotsc,q-1\}:\,\min_{x\in[N/q,4N/q]}\big|\psi^{(i)}\big(\theta(qx+r)+rp/q\big)\big|< \frac{2C_2}{c_1}N^{-\sigma}\bigg\}\text{.}
\end{equation}
It is not difficult to see that for $N\gtrsim 1$ and for every $0\le i\le d_0$ we have
\begin{equation}\label{removex}
\mathcal{E}_i(N)\subseteq\bigg\{r\in\{0,\dotsc,q-1\}:\,|\psi^{(i)}(rp/q)|< \frac{4C_2}{c_1}N^{-\sigma}\bigg\}\eqcolon \mathcal{E}_{i,1}(N)\text{.}
\end{equation}
To see this, we note that for every $r\in\mathcal{E}_i(N)$ we have that there exists $x\in[N/q,4N/q]$ such that
\begin{equation}
\big|\psi^{(i)}\big(\theta(qx+r)+rp/q\big)\big|< \frac{2C_2}{c_1}N^{-\sigma}\text{,}
\end{equation} 
but then for every $0\le i\le d_0$ and $N\gtrsim 1 $ we have
\begin{equation}
\begin{split}
|\psi^{(i)}(rp/q)|&\le |\psi^{(i)}(rp/q)-\psi^{(i)}\big(\theta(qx+r)+rp/q\big)\big|+\big|\psi^{(i)}\big(\theta(qx+r)+rp/q\big)\big|
\\
&\le \|\psi^{(i+1)}\|_{L^{\infty}}|\theta|5N+\frac{2C_2}{c_1}N^{-\sigma}\le \frac{2C_2}{c_1}N^{-\tau+(\tau-\sigma)/2}+\frac{2C_2}{c_1}N^{-\sigma}\le\frac{4C_2}{c_1}N^{-\sigma} \text{,}
\end{split}
\end{equation}
where for the last estimate we note that 
\[
-\tau+(\tau-\sigma)/2<-\sigma\iff(\tau-\sigma)/2<\tau-\sigma\iff \tau-\sigma>0\text{,}
\]
which is true, since $\sigma<\tau$, and for the penultimate estimate we note that for $N\gtrsim1$ we have that
\[
\|\psi^{(i+1)}\|_{L^{\infty}}|\theta|5N\le \frac{2C_2}{c_1}N^{-\tau+(\tau-\sigma)/2}\text{,}
\]
since
\[
\|\psi^{(i+1)}\|_{L^{\infty}}|\theta|5N\le \frac{2C_2}{c_1}N^{-\tau+(\tau-\sigma)/2}\iff N|\theta|\le \frac{2C_2}{5c_1\|\psi^{(i+1)}\|_{L^{\infty}}}N^{-\tau+(\tau-\sigma)/2}\text{,}
\] 
and the last estimate clearly holds for $N\gtrsim1$ since $N|\theta|\lesssim N^{-\tau}$, and $\tau-\sigma>0$. Note that $\|\psi^{(i+1)}\|_{L^{\infty}}>0$ by our assumption. Thus for $N\gtrsim1$ and for every $r\in\mathcal{E}_i(N)$ we have that $
|\psi^{(i)}(rp/q)|< \frac{4C_2}{c_1}N^{-\sigma}$, and \eqref{removex} is justified.

Now we note that  
\begin{equation}
|\mathcal{E}_{i,1}(N)|=|\mathcal{E}_{i,2}(N)|\text{,}
\end{equation}
where 
\begin{equation}
\mathcal{E}_{i,2}(N)\coloneq\bigg\{r\in\{0,\dotsc,q-1\}:\,|\psi^{(i)}(r/q)|< \frac{4C_2}{c_1}N^{-\sigma}\bigg\}\text{.}
\end{equation}
This is immediate by noting that the map $\mathbb{Z}\ni r\to\psi^{(i)}(r/q)$ depends only on the value of $r\Mod{q}$ by the periodicity of $\psi^{(i)}$, and that the map $\mathbb{Z}/(q\mathbb{Z})\ni r\to pr\in\mathbb{Z}/(q\mathbb{Z})$ is a bijection, since $\gcd(p,q)=1$. 

It is natural to consider 
\begin{equation}
B_i(N)\coloneq\bigg\{x\in\mathbb{T}:\,|\psi^{(i)}(x)|< \frac{4C_2}{c_1}N^{-\sigma}\bigg\}
\end{equation}
so that
\[
\mathcal{E}_{i,2}(N)=\big\{r\in\{0,\dotsc,q-1\}:\,r/q\in B_i(N)\big\}\text{.}
\]
We may now use Lemma~$\ref{elementarybutdifficult}$ to conclude. More precisely, the aforementioned lemma guarantees that there exists $\sigma'>0$ such that for $N\gtrsim1$ we have that $B_i(N)$ is a finite union of intervals, the number of which is $\lesssim 1$, and each interval has length at most $\lesssim N^{-\sigma'}$. The number of $r$'s in $\{0,\dotsc,q-1\}$ such that $r/q$ belongs in one of these interval is bounded by $\lesssim qN^{-\sigma'}+1$, and thus
\[
|\mathcal{E}(N)|\le \sum_{i=0}^{d_0}|\mathcal{E}_{i}(N)|\le\sum_{i=0}^{d_0}|\mathcal{E}_{i,1}(N)|=\sum_{i=0}^{d_0}|\mathcal{E}_{i,2}(N)|\lesssim\sum_{i=0}^{d_0} \big(qN^{-\sigma'}+1\big)\lesssim qN^{-\sigma'}+1\text{.}
\] 
Thus, the final summand in \eqref{Step2end} can now be estimated as follows
\[
N\cdot\frac{|\mathcal{E}(N)|}{q}\lesssim N\big(N^{-\sigma'}+q^{-1}\big)\lesssim N^{1-\sigma'}+N/q \lesssim N^{1-\sigma'}+N^{1-\frac{\rho}{\omega'-1}}\text{,}
\]
and returning to \eqref{Step2end}, we have shown that there exists $\sigma'>0$ such that
\begin{equation}\label{Step3end}
\begin{split}
\Big|\sum_{n=N+1}^{2N}e(\xi P(n)\psi(\alpha n))\Big|
\lesssim \sum_{r\in\mathcal{M}(N)}\Big|\sum_{\frac{N}{q}\le m\le \frac{2N}{q}}e\big(F(m;N,\xi,r)\big)\Big|+N^{\rho}+N^{1-\sigma'}+N^{1-\frac{\rho}{\omega'-1}}
\text{.}
\end{split}
\end{equation}
Since $\max\big(\rho,1-\sigma',1-\frac{\rho}{\omega'-1}\big)<1$, we may focus on the first summand above.
\smallskip
\paragraph{\textbf{Step 4: Employing the Vinogradov method.}} 
Let for convenience $M\coloneq \lfloor N/q\rfloor$. We may focus on the exponential sums along the range $[M+1,2M]$ since  
\begin{equation}
\begin{split}
\sum_{r\in\mathcal{M}(N)}\Big|\sum_{\frac{N}{q}\le m\le \frac{2N}{q}}e\big(F(m;N,\xi,r)\big)\Big|\le&\sum_{r\in\mathcal{M}(N)}\Big|\sum_{ m=M+1}^{2M}e\big(F(m;N,\xi,r)\big)\Big|+\sum_{r\in\mathcal{M}(N)}4
\\
\le&\sum_{r\in\mathcal{M}(N)}\Big|\sum_{ m=M+1}^{2M}e\big(F(m;N,\xi,r)\big)\Big|+4N^{\rho}\text{,}
\end{split}
\end{equation}
where we used the trivial estimate for the boundary summands, as well as the estimate $|\mathcal{M}(N)|\le q\le N^{\rho}$. 

We fix $r\in\mathcal{M}(N)$, apply Proposition~$\ref{genericvin}$, and obtain that for every $s,k\in\mathbb{N}$, $X\in[3,M^{1/2}]\cap\mathbb{N}$ and $I\subseteq [k]$, we have
\begin{equation}
\label{vinmethodnew}
\begin{split}
\Big|\sum_{a=M+1}^{2M}e(F(a;N,\xi,r))\Big|\le& \sum_{a=M+1}^{2M}J_{s,k}(X)^{\frac{1}{2s^2}}\Delta_I(a)^{\frac{1}{4s^2}} s^{\frac{k}{2s^2}}X^{\frac{k(k+1)}{4s^2}-\frac{1}{s}}\\
&+2\pi MX^{2(k+1)}\sup_{\theta\in(M+1, 3M)}\bigg|\frac{ F^{(k+1)}(\theta)}{(k+1)!}\bigg|+2X^2\text{,}
\end{split}
\end{equation}
where 
\begin{equation}
\label{DeltaIdefnew}
\Delta_I(a)\coloneq\big(10k\log(3sX)\big)^k\prod_{j\in I}\bigg(\bigg|\frac{F^{(j)}(a)}{j!}\bigg|+\bigg|\frac{ F^{(j)}(a)}{j!}\bigg|^{-1}X^{-2j}\bigg).
\end{equation}
We apply this for the following choices of parameters 
\begin{equation}\label{choicesvinogradov}
k\coloneq d_0-1\text{,}\quad s\coloneq \frac{k(k+1)}{2}\text{,}\quad X\coloneq \big\lfloor M^{1/2}N^{-\frac{\sigma}{2d_0}}\big\rfloor\text{.}
\end{equation} 
By \eqref{lowerboundd0} and \eqref{tau'choice}we get that $k\ge 1$ and $X\ge \big(\frac{N}{q}-1\big)^{1/2}N^{-\frac{\sigma}{2d_0}}-1\gtrsim N^{\frac{1-\rho-\sigma}{2}}-1\gtrsim N^{\sigma}-1$, respectively, and thus for $N\gtrsim1$ we do have that $X\ge 3$.

We are ready to treat the last two summands in the right side of \eqref{vinmethodnew}. One the one hand, the last one can be readily estimated since
\begin{equation}\label{3rdsum}
2X^2\le \frac{2N}{q}\cdot N^{-\frac{\sigma}{d_0}}=\frac{2N^{1-\frac{\sigma}{d_0}}}{q}\text{.}
\end{equation}
On the other hand, for the second summand we use the estimate \eqref{derivativefull}, the above estimate, and the fact that $k+1=d_0$ to estimate as follows
\begin{equation}\label{2ndsum}
\begin{split}
2\pi MX^{2(k+1)}\sup_{\theta\in(M+1, 3M)}\bigg|\frac{ F^{(k+1)}(\theta)}{(k+1)!}\bigg|\lesssim \frac{N}{q}\cdot\frac{N^{d_0-\sigma}}{q^{d_0}} \cdot|\xi|q^{d_0} |\theta|^{d_0-d}
\\
\le\frac{N}{q}\cdot N^{d_0-\sigma} \cdot N |\theta|^{d_0-d}
=\frac{1}{q}N^{2+d-\sigma} (N|\theta|)^{d_0-d}\lesssim \frac{1}{q}N^{-\sigma+(d+2)-(d_0-d)\tau}\text{,}
\end{split}
\end{equation}
where we have used the fact that $|\xi|\le N$. Taking into account \eqref{s_0choice}, we get that $d_0\ge \frac{d+2}{\tau}+d$, and thus
\[
\tau(d_0-d)\ge d+2\text{.}
\]
Therefore, we may proceed with our estimation as follows
\begin{equation}\label{2ndsumnew}
\begin{split}
2\pi MX^{2(k+1)}\sup_{\theta\in(M+1, 3M)}\bigg|\frac{ F^{(k+1)}(\theta)}{(k+1)!}\bigg|\lesssim \frac{1}{q}N^{(2+d-\sigma)-(d_0-d)\tau}\le \frac{1}{q}N^{-\sigma}\lesssim 1\text{.}
\end{split}
\end{equation}
Using \eqref{2ndsumnew}and \eqref{3rdsum} in \eqref{vinmethodnew} we get that for every $r\in\mathcal{M}(N)$ we have
\begin{equation}
\label{vinmethodnewnew}
\begin{split}
\Big|\sum_{a=M+1}^{2M}e(F(a;N,\xi,r))\Big|\lesssim & \bigg(\sum_{a=M+1}^{2M}J_{s,k}(X)^{\frac{1}{2s^2}}\Delta_I(a)^{\frac{1}{4s^2}} s^{\frac{k}{2s^2}}X^{\frac{k(k+1)}{4s^2}-\frac{1}{s}}\bigg)+1+\frac{N^{1-\frac{\sigma}{d_0}}}{q}\text{,}
\end{split}
\end{equation}
so let us focus on the first summand.

The first step would be to apply Theorem~$\ref{VMVTf}$ for our choice of natural numbers $s,k$ and for $\varepsilon=\frac{\sigma}{100}$ to obtain that
\[
J_{s,k}(X)\lesssim X^{\frac{\sigma}{100}}\Big(X^s+X^{2s-\frac{k(k+1)}{2}}\Big)=2X^{s+\frac{\sigma}{100}}\text{.}
\]
Thus, the first summand becomes
\begin{equation}
\begin{split}
\sum_{a=M+1}^{2M}J_{s,k}(X)^{\frac{1}{2s^2}}\Delta_I(a)^{\frac{1}{4s^2}} s^{\frac{k}{2s^2}}X^{\frac{k(k+1)}{4s^2}-\frac{1}{s}}\lesssim \sum_{a=M+1}^{2M}X^{\frac{1}{2s}+\frac{\sigma}{200s^2}}\Delta_I(a)^{\frac{1}{4s^2}} X^{-\frac{1}{2s}}
\\
\lesssim N^{\frac{\sigma}{200s^2}}\sum_{a=M+1}^{2M}\Delta_I(a)^{\frac{1}{4s^2}}\text{,}
\end{split}
\end{equation}
where we note that
\[
\frac{k(k+1)}{4s^2}-\frac{1}{s}=\frac{s}{2s^2}-\frac{1}{s}=-\frac{1}{2s}\text{,}
\] 
and we remind the reader that $s,k$ depend only on $\alpha,P,\psi,\delta$, and our implicit constants may therefore depend on them as well.

Combining this estimate with \eqref{vinmethodnewnew} we obtain
\begin{equation}
\label{vinmethodnewnewnew}
\begin{split}
\Big|\sum_{a=M+1}^{2M}e(F(a;N,\xi,r))\Big|\lesssim 1+\frac{N^{1-\frac{\sigma}{d_0}}}{q}+N^{\frac{\sigma}{200s^2}}\sum_{a=M+1}^{2M}\Delta_I(a)^{\frac{1}{4s^2}}\text{,}
\end{split}
\end{equation}
and returning to \eqref{Step3end} we obtain for $N\gtrsim1$ that
\begin{equation}\label{Step4end}
\begin{split}
\Big|\sum_{n=N+1}^{2N}e(\xi P(n)\psi(\alpha n))\Big|
\lesssim \sum_{r\in\mathcal{M}(N)}\Big|\sum_{\frac{N}{q}\le m\le \frac{2N}{q}}e\big(F(m;N,\xi,r)\big)\Big|+N^{\rho}+N^{1-\sigma'}+N^{1-\frac{\rho}{\omega'-1}}
\\
\lesssim q+N^{1-\frac{\sigma}{d_0}}+N^{\frac{\sigma}{200s^2}}\sum_{r\in\mathcal{M}(N)}\sum_{a=M+1}^{2M}\Delta_I(a)^{\frac{1}{4s^2}}+N^{\rho}+N^{1-\sigma'}+N^{1-\frac{\rho}{\omega'-1}}
\\
\lesssim N^{\rho}+N^{1-\frac{\sigma}{d_0}}+N^{1-\sigma'}+N^{1-\frac{\rho}{\omega'-1}}+N^{\frac{\sigma}{200s^2}}\sum_{r\in\mathcal{M}(N)}\sum_{a=M+1}^{2M}\Delta_I(a)^{\frac{1}{4s^2}}
\text{.}
\end{split}
\end{equation}
so we may finally turn our attention to estimating $\Delta_I(a)$ for $a\in[M+1,2M]$.

The estimate above holds with some constant depending only on $\alpha,P,\psi,\delta$ and uniform with respect to the choice of the interval $I$, and in the following step we will make an appropriate choice. More precisely, we will prove that for every frequency $N^{-d+\delta}\le |\xi|\le N$, there exists an $I=I(\xi,N)=\{j_0=j_0(\xi,N)\}$, $j_0\in[d,d_0]\cap\mathbb{N}$, such that a polynomial saving is achieved here.
\smallskip
\paragraph{\textbf{Step 5: Choosing $I$ and concluding.}} We begin by letting $I=\{j\}\subseteq[d,d_0]\cap\mathbb{N}$ be arbitrary and estimate $\Delta_I(a)$ for $a\in[M+1,2M]\subseteq [N/q,4N/q]$ using \eqref{derivativefull}  as follows
\begin{equation}
\label{DeltaIestbasic}
\begin{split}
\Delta_I(a)\coloneq&\big(10k\log(3sP)\big)^k\prod_{j\in I}\bigg(\bigg|\frac{F^{(j)}(a)}{j!}\bigg|+\bigg|\frac{ F^{(j)}(a)}{j!}\bigg|^{-1}X^{-2j}\bigg)
\\
\lesssim& (\log N)^{k}\bigg(|\xi|q^j|\theta|^{j-d}+|\xi|^{-1}q^{-j}|\theta|^{d-j}N^{\sigma}\Big(N/q\cdot N^{-\frac{\sigma}{d_0}}\Big)^{-j}\bigg)
\\
\lesssim& N^{\sigma}\Big(|\xi||\theta|^{j-d}N^{\rho j}+|\xi|^{-1}|\theta|^{d-j}|N^{\sigma}N^{-j} N^{\frac{\sigma}{d_0}j}\Big)
\\
\lesssim& N^{\sigma}\Big(|\xi||\theta|^{j-d}N^{\rho j}+|\xi|^{-1}|\theta|^{d-j}N^{-j+2\sigma}\Big)\text{.}
\end{split}
\end{equation}
Given  an $N\in\mathbb{N}$ and $\xi\in\mathbb{R}$ with $N^{-d+\delta}\le |\xi|\le N$, we let $j_0=j_0(\xi,N)\in[d, d_0]$ be the smallest integer such that 
\[
|\xi||\theta|^{j_0-d}N^{\rho j_0}\le N^{-2\sigma}\text{.}
\]
This is always possible since 
\begin{equation}\label{nonemptyj0}
\big\{j\in[d,d_0]\cap\mathbb{Z}:\,|\xi||\theta|^{j-d}N^{\rho j}\le N^{-2\sigma}\big\}\neq \emptyset\text{.}
\end{equation}
To see this, note that $d_0-1\ge d$ belongs in the set, since by \eqref{upperboundsfortheta} we may estimate as follows
\begin{equation}
\begin{split}
|\xi|&|\theta|^{(d_0-1)-d}N^{\rho (d_0-1)}\le NN^{\big(-\rho-\frac{\rho}{\omega'-1}\big)((d_0-1)-d)}N^{\rho (d_0-1)}
\\
=&NN^{-\rho (d_0-1)-\frac{\rho}{\omega'-1}(d_0-1)+\rho d+\frac{\rho}{\omega'-1}d}N^{\rho (d_0-1)}=N^{1-\frac{\rho}{\omega'-1}(d_0-1)+\rho d+\frac{\rho}{\omega'-1}d}\le N^{-1}\le N^{-2\sigma}\text{,}
\end{split}
\end{equation}
since $\sigma\le \frac{1-\rho}{4}\le 1/4$, and 
\begin{equation}
\begin{split}
&1-\frac{\rho}{\omega'-1}(d_0-1)+\rho d+\frac{\rho}{\omega'-1}d\le -1\iff 2+\frac{\rho}{\omega'-1}+\rho d+\frac{\rho}{\omega'-1}d\le \frac{\rho}{\omega'-1}d_0
\\
&\iff \frac{2(\omega'-1)}{\rho}+1+d(\omega'-1)+d\le d_0\iff\frac{2(\omega'-1)}{\rho}+1+d\omega'\le d_0\text{,}
\end{split}
\end{equation}
which is true, since by \eqref{s_0choice} we get that
\begin{equation}
d_0\ge d\omega'+4(\omega'-1)+1\ge  d\omega'+\frac{2(\omega'-1)}{\rho}+1\text{,} \quad\text{because $\rho>1/2$.}
\end{equation}
Thus, \eqref{nonemptyj0} is justified and we also get that $j_0\le d_0-1$. Since $j_0$ is the smallest with this property, we get that either $j_0-1=d-1$ or   
\begin{equation}
j_0-1\ge d\quad\text{and}\quad |\xi||\theta|^{j_0-1-d}N^{\rho (j_0-1)}> N^{-2\sigma}\text{.}
\end{equation}

In the first case, namely, if $j_0=d$, we get that
\[
|\xi|^{-1}|\theta|^{d-j_0}N^{-j_0+2\sigma}\le N^{d-\delta}N^{-d+2\sigma}=N^{-
\delta+2\sigma}\le N^{-2\sigma}\text{,}\quad\text{since $\sigma\le \frac{\delta}{4}$,}
\]
and, consequently, in this case we have
\[
\Delta_{\{j_0\}}(a)\lesssim N^{\sigma}\Big(|\xi||\theta|^{j_0-d}N^{\rho j_0}+|\xi|^{-1}|\theta|^{d-j_0}N^{-j_0+2\sigma}\Big)\lesssim N^{-\sigma}\text{.}
\]

In the second case, that is, if $j_0-1\ge d$, we must have that
\begin{equation}
\begin{split}
|\xi||\theta|^{j_0-1-d}N^{\rho (j_0-1)}> N^{-2\sigma}\Longrightarrow|\xi|^{-1}|\theta|^{d-j_0}< N^{2\sigma+\rho (j_0-1)}|\theta|^{-1}\text{,}
\end{split}
\end{equation}
and thus the last summand in the final bound of \eqref{DeltaIestbasic} can be bounded as follows for $I\coloneq \{j_0\}$
\begin{equation}
\begin{split}
|\xi|^{-1}|\theta|^{d-j_0}N^{-j_0+2\sigma}\le N^{2\sigma+\rho (j_0-1)}|\theta|^{-1}N^{-j_0+2\sigma}\lesssim N^{4\sigma+\rho j_0-\rho}N^{\rho\omega'}N^{-j_0}
\\
= N^{4\sigma-\rho+j_0(\rho-1)+\rho\omega'}\le N^{4\sigma-\rho+(d+1)(\rho-1)+\rho\omega'}
= N^{4\sigma-\rho+d\rho-d+\rho-1+\rho\omega'}\\
=N^{4\sigma+d\rho-d-1+\rho\omega'}=N^{4\sigma+\rho(d+\omega')-(d+1)}\le N^{-4\sigma}\text{,}
\end{split}
\end{equation}
since 
\[
\rho(d+\omega')-(d+1)\le -8\sigma
\]
because
\[
\sigma\le \frac{(d+1)-\rho(d+\omega')}{8}\text{,}\quad\text{by \eqref{tau'choice}.}
\]
But then, once again, we get that
\[
\Delta_{\{j_0\}}(a)\lesssim N^{\sigma}\Big(|\xi||\theta|^{j_0-d}N^{\rho j_0}+|\xi|^{-1}|\theta|^{d-j_0}N^{-j_0+2\sigma}\Big)\lesssim N^{-\sigma}\text{.}
\]
Thus, returning to \eqref{Step4end} and choosing $I=\{j_0=j_0(\xi,N)\}$, we get that
\begin{equation}\label{Step5end}
\begin{split}
\Big|\sum_{n=N+1}^{2N}e(\xi P(n)\psi(\alpha n))\Big|
\lesssim N^{\rho}+N^{1-\frac{\sigma}{d_0}}+N^{1-\sigma'}+N^{1-\frac{\rho}{\omega'-1}}+N^{\frac{\sigma}{200s^2}}\sum_{r\in\mathcal{M}(N)}\sum_{a=M+1}^{2M}\Delta_{\{j_0\}}(a)^{\frac{1}{4s^2}}&
\\
\lesssim N^{\rho}+N^{1-\frac{\sigma}{d_0}}+N^{1-\sigma'}+N^{1-\frac{\rho}{\omega'-1}}+N^{\frac{\sigma}{200s^2}}qMN^{-\frac{\sigma}{4s^2}}&
\\
\lesssim N^{\rho}+N^{1-\frac{\sigma}{d_0}}+N^{1-\sigma'}+N^{1-\frac{\rho}{\omega'-1}}+N^{1-\frac{\sigma}{8s^2}}&\text{.}
\end{split}
\end{equation}
Letting $\chi\coloneq \min(1-\rho,\frac{\sigma}{d_0},\sigma',\frac{\rho}{\omega'-1},\frac{\sigma}{8s^2})>0$, we have shown that for every $N\gtrsim 1$ and $\xi\in\mathbb{R}$ with $N^{-d+\delta}\le|\xi|\le N$ we have
\begin{equation}
\Big|\sum_{n=N+1}^{2N}e(\xi P(n)\psi(\alpha n))\Big|
\lesssim N^{1-\chi}\text{,}\quad\text{and the proof is complete.}
\end{equation}
\end{proof}
\subsection{Removing the floor} Proposition~$\ref{minarcprop}$ is the heart of the minor arc analysis and now all that remains is to establish the estimate for the floored phase. Although, the handling of the floor in the minor arc regime is not as straightforward as in the major arc one, see \eqref{removefloor}, its complexity is nowhere near the one of the previous proof. Our arguments are quite generic, in the sense that the specific form of the sequences $(P(n)\psi(\alpha n))_{n\in\mathbb{N}}$ is not utilized. The first step is to establish a full (or non-dyadic) variant of Proposition~$\ref{minarcprop}$. This is standard, but for the convenience of the reader we provide the details.
\begin{proposition}\label{minfull}
Assume that $\psi\colon \mathbb{R}\to\mathbb{R}$ is a smooth $1$-periodic function such that none of its derivatives has a root of infinite order. Let $\alpha\in\mathbb{R}$ be such that  $\omega(\alpha)\eqcolon \omega\in[2,\infty)$, let $P\in\mathbb{R}[x]$ be of degree $d\ge 1$ and assume that
\begin{equation}\label{mudrestrsame}
\omega<1+\sqrt{d+1}\text{.}
\end{equation}
Then for every $\delta'>0$ there exist positive constants $\chi'=\chi'(\alpha,P,\psi,\delta')$ and $C=C(\alpha,P,\psi,\delta')$ such that for every $N\in\mathbb{N}$ and $\xi\in\mathbb{R}$ with $N^{-d+\delta'}\le |\xi|\le N^{1/2}$ we have
\begin{equation}\label{goalfullexpsummin}
\bigg|\sum_{n=1}^Ne\big(\xi P(n)\psi(\alpha n)\big)\bigg|\le CN^{1-\chi'}\text{.}
\end{equation}
\end{proposition}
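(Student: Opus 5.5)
We deduce Proposition~\ref{minfull} from the dyadic estimate of Proposition~\ref{minarcprop} by a dyadic decomposition of $[1,N]$, discarding the short initial range trivially.

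Fix $\delta'>0$ and apply Proposition~\ref{minarcprop} with $\delta\coloneq\delta'/2$. This gives $\chi=\chi(\alpha,P,\psi,\delta'/2)>0$ and $C>0$ such that for every real $M\ge1$ and every $\xi$ with $M^{-d+\delta'/2}\le|\xi|\le M$ we have
\[
\Big|\sum_{M<n\le 2M}e\big(\xi P(n)\psi(\alpha n)\big)\Big|\le CM^{1-\chi}\text{.}
\]
Given $N$ and $\xi$ with $N^{-d+\delta'}\le|\xi|\le N^{1/2}$, introduce a cutoff $N_0\coloneq N^{1-\kappa}$ with a small $\kappa\in(0,1/2)$ to be fixed. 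Bound the sum over $n\le N_0$ trivially by $N^{1-\kappa}$.

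Decompose $(N_0,N]$ into $O(\log N)$ blocks $(M,2M]$, with $M=N/2^{l}\ge N_0/2$. The final block may be a truncated interval $(M,N]$ with $N<2M$; write it as $(M,2M]$ with its top endpoint replaced by $N$. To handle this truncation, either choose the blocks so that $N$ is exactly the top endpoint of the last dyadic block (taking $M=N/2,N/4,\dotsc$, which is allowed since Proposition~\ref{minarcprop} holds for real $N\in[1,\infty)$), or note that the proposition's sum is over $M<n\le2M$ with real $M$. So no truncation issue arises: the blocks $(N/2^{l+1},N/2^{l}]$ for $0\le l\le L$, with $N/2^{L+1}\approx N_0$, tile $(N_0,N]$ up to the trivially bounded initial part.

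Check the frequency hypothesis on each block. For $N^{1-\kappa}/2\le M\le N$:
\begin{itemize}
\item \emph{Upper bound.} $|\xi|\le N^{1/2}\le M$ holds once $N^{1-\kappa}/2\ge N^{1/2}$, i.e.\ for $N\gtrsim1$ since $\kappa<1/2$.
\item \emph{Lower bound.} We need $M^{-d+\delta'/2}\le N^{-d+\delta'}\le|\xi|$. Since $M\ge N^{1-\kappa}/2$, we have
\[
M^{-d+\delta'/2}\lesssim N^{(1-\kappa)(-d+\delta'/2)}\le N^{-d+d\kappa+\delta'/2}\text{.}
\]
Choosing $\kappa\coloneq\delta'/(4d)$ gives the exponent $-d+3\delta'/4<-d+\delta'$. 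Hence for $N\gtrsim1$ the implied constant is absorbed and $M^{-d+\delta'/2}\le N^{-d+\delta'}$.
\end{itemize}
(If $d\kappa$ were too large this would fail; this is why $\kappa$ must be tied to $\delta'$.)

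Summing the dyadic bounds then gives
\[
\sum_{l}CM_l^{1-\chi}\lesssim N^{1-\chi}\log N\text{,}
\]
and together with the initial range the total is $\lesssim N^{1-\kappa}+N^{1-\chi}\log N\lesssim N^{1-\chi'}$ with $\chi'\coloneq\min(\kappa,\chi/2)$. Small $N$ are absorbed into the constant $C$.

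The only real point of care is the lower-frequency compatibility: a fixed $\xi$ satisfying $|\xi|\ge N^{-d+\delta'}$ must lie in the minor arc at every smaller dyadic scale $M$. This forces the loss of $\delta'/2$ in the application of Proposition~\ref{minarcprop} and the cutoff $N_0$. The restriction $|\xi|\le N^{1/2}$ serves the analogous purpose for the upper constraint $|\xi|\le M$.
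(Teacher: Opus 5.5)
Your proposal is correct and is essentially the paper's proof. Both apply Proposition~\ref{minarcprop} with $\delta=\delta'/2$ on the dyadic blocks $(N/2^{i+1},N/2^{i}]$ down to a cutoff $N^{1-c\delta'/d}$, check both frequency constraints on each block, and bound the initial segment trivially. The only differences are cosmetic: the paper takes the cutoff exponent $\delta'/(2d)$ instead of your $\delta'/(4d)$, and it sums the dyadic bounds as a geometric series rather than accepting a $\log N$ loss.
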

\begin{proof}Fix $\psi,\alpha,P,\delta'$ as in the statement of the proposition, and without loss of generality assume that $\delta'\in(0,1)$. All implicit constants may depend on these fixed parameters. We apply Proposition~$\ref{minarcprop}$ with $\delta=\delta'/2$, to deduce the existence of a positive real $\chi=\chi(\alpha,P,\psi,\delta)$ such that for every $t\ge1$ and $\zeta\in\mathbb{R}$ with $t^{-d+\delta'/2}\le |\zeta|\le t$ we have
\begin{equation}\label{expsummingen}
\Big|\sum_{t<n\le 2t}e(\zeta P(n)\psi\big(\alpha n)\big)\Big|\lesssim t^{1-\chi}\text{,}
\end{equation}
and let us assume without loss of generality that $\chi< \min(1/2,\delta/4)$.

It suffices to establish \eqref{goalfullexpsummin}  for $N^{-d+\delta'}<\xi\le N^{1/2}$ and $N\gtrsim 1$.  Let $K\coloneqq \big\lfloor \frac{\delta'}{2d}(\log_2 N)\big\rfloor$, and estimate as follows
\begin{equation}\label{splitcalc}
\begin{split}
\bigg|\sum_{n\le N}e\big(\xi P(n)\psi(\alpha n)\big)\bigg|\le& \sum_{i=0}^{K-1}\bigg|\sum_{N/2^{i+1}<n\le N/2^i}e\big(\xi P(n)\psi(\alpha n)\big)\bigg|+\bigg|\sum_{1\le n\le N/2^K
}e\big(\xi P(n)\psi(\alpha n)\big)\bigg|
\\
\le &\sum_{i=0}^{K-1}\bigg|\sum_{N/2^{i+1}<n\le N/2^i}e\big(\xi P(n)\psi(\alpha n)\big)\bigg|+N2^{-K}\text{.}
\end{split}
\end{equation}
Note that
\[
N2^{-K}\le 2N2^{-K-1}\le 2N2^{-\frac{\delta'}{2d}\log_2N}=2N^{1-\frac{\delta'}{2d}}\text{.}
\]
We wish to use the estimate \eqref{expsummingen} for every dyadic piece above with $\delta=\delta'/2$.  We claim that for $N\gtrsim 1$ and every $i\in\{0,\dotsc,K-1\}$ we have 
\begin{equation}\label{toapplyexp}
(N/2^{i+1})^{-d+\delta'/2}\le\xi\le N/2^{i+1}\text{.}
\end{equation}

For the second inequality, we note that since $\xi<N^{1/2}$ it suffices to check that $N^{1/2}\le N/2^{i+1}$, which does hold since
\begin{equation}
\begin{split}
N^{1/2}\le N/2^{i+1}\iff 2^{i+1}\le N^{1/2}\iff i+1\le \frac{1}{2}\log_2N\text{,}
\end{split}
\end{equation}
which is true because $i+1\le K\le \frac{\delta'}{2d}\log_2 N\le \frac{1}{2}\log_2 N$. 

For the lower bound, we note that since $\xi\ge N^{-d+\delta'}$ it suffices to prove that $N^{-d+\delta'}\ge(N/2^{i+1})^{-d+\delta'/2} $. To see this, note that
\begin{equation}
\begin{split}
N^{-d+\delta'}\ge(N/2^{i+1})^{-d+\delta'/2} \iff& N^{-d+\delta'}\ge N^{-d+\delta'/2}2^{(i+1)(d-\delta'/2)} \iff N^{\delta'/2}\ge 2^{(i+1)(d-\delta'/2)} 
\\
\iff& \frac{\delta'}{2}\log_2 N\ge (i+1)(d-\delta'/2)\iff i+1\le \frac{\delta'}{2d-\delta'}\log_2 N\text{,}
\end{split}
\end{equation}
which is true, since
\[
i+1\le K\le\frac{\delta'}{2d}\log_2 N\le  \frac{\delta'}{2d-\delta'}\log_2 N\text{.}
\]
Thus, for every $i\in\{0,\dotsc,K-1\}$ the estimate \eqref{expsummingen} is applicable for every $N^{-d+\delta'}<\xi\le N^{1/2}$ for $N\gtrsim1$ and yields
\begin{equation*}
\begin{split}
\bigg|\sum_{n\le N}e\big(\xi P(n)\psi(\alpha n)\big)\bigg|
\lesssim& \sum_{i=0}^{K-1}\bigg|\sum_{N/2^{i+1}<n\le N/2^i}e\big(\xi P(n)\psi(\alpha n)\big)\bigg|+N^{1-\frac{\delta'}{2d}}
\\
\lesssim &\sum_{i=0}^{K-1}(N/2^i)^{1-\chi}+N^{1-\frac{\delta'}{2d}}=N^{1-\chi}\sum_{i=0}^{K-1}2^{i(\chi-1)}+N^{1-\frac{\delta'}{2d}}
\\
\le& N^{1-\chi}\sum_{i=0}^{\infty}\Big(1/\sqrt{2}\Big)^{i}+ N^{1-\frac{\delta'}{2d}}\lesssim N^{1-\chi}+N^{1-\frac{\delta'}{2d}}\text{,}
\end{split}
\end{equation*}
and taking $\chi'\coloneq\min\big(\chi,\frac{\delta'}{2d}\big)>0$ completes the proof.
\end{proof}
We now state a general auxiliary floor-removing lemma.
\begin{lemma}[Floor-removing lemma]\label{floorremovgeneric} Fix a sequence of real numbers $(a_n)_{n\in\mathbb{N}}$ and assume that there exist positive constants $l_1,l_2,C',\chi'$ such that for every $N\in\mathbb{N}$ and $\xi\in\mathbb{R}$ with $N^{-l_1}\le|\xi|\le N^{l_2}$ we have
\begin{equation}\label{unflooredgeneric}
\Big|\sum_{n=1}^Ne(\xi a_n)\Big|\le C'N^{1-\chi'}\text{.}
\end{equation}
Then there exist positive constants $C,\chi$ such that for every $N\in\mathbb{N}$ and $\xi\in\mathbb{R}$ with $\|\xi\|\ge N^{-l_1}$ we have
\begin{equation}\label{goalfloor}
\Big|\sum_{n=1}^Ne(\xi\lfloor a_n\rfloor)\Big|\le CN^{1-\chi}\text{.}
\end{equation} 
\end{lemma}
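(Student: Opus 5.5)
The plan is the standard Fourier-expansion of the fractional part. Write $\xi\lfloor a_n\rfloor=\xi a_n-\xi\{a_n\}$. Since $\lfloor a_n\rfloor\in\mathbb{Z}$, the sum in \eqref{goalfloor} depends only on $\xi$ modulo $1$, so we may assume $\xi\in[-1/2,1/2)$ with $|\xi|=\|\xi\|\ge N^{-l_1}$. The function $\phi_\xi(x)\coloneq e(-\xi\{x\})$ is $1$-periodic, but it has a jump at the integers. Our aim is to approximate it by a trigonometric polynomial $\sum_{|h|\le H}c_h(\xi)e(hx)$ with $H=N^{\eta}$ for a small $\eta>0$. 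Then
\[
\sum_{n\le N}e(\xi\lfloor a_n\rfloor)\approx\sum_{|h|\le H}c_h(\xi)\sum_{n\le N}e\big((\xi+h)a_n\big),
\]
where $c_h(\xi)=\int_0^1e(-\xi x)e(-hx)dx$, so $|c_h(\xi)|\lesssim(1+|h|)^{-1}$.

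The main obstacle is the jump discontinuity, which prevents a uniform truncated Fourier approximation. I will handle it as in the Vaaler/Erd\H{o}s--Tur\'an approach. Fix a small $\varepsilon=N^{-\eta'}$ and let $\mathcal{B}$ be the set of $n\le N$ with $\|a_n\|<\varepsilon$. Use a smooth $1$-periodic cutoff: replace $\phi_\xi$ by a function $\tilde\phi_\xi$ that agrees with $\phi_\xi$ outside the $\varepsilon$-neighbourhood of $\mathbb{Z}$ and is smooth with $\|\tilde\phi_\xi^{(j)}\|_\infty\lesssim_j\varepsilon^{-j}$. Its Fourier coefficients satisfy $|\hat{\tilde\phi}_\xi(h)|\lesssim_j\min(1,(\varepsilon|h|)^{-j})$. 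Truncating at $H=\varepsilon^{-1}N^{\eta''}$ then costs $O(N^{-10})$ per term. The error from the modified region is at most $2|\mathcal{B}|$. To bound $|\mathcal{B}|$, majorize the indicator of $\{\|x\|<\varepsilon\}$ by a smooth bump of width $2\varepsilon$, i.e.\ a Fej\'er- or Selberg-type majorant with coefficients $\lesssim\varepsilon$ supported on $|h|\le\varepsilon^{-1}N^{\eta''}$ up to negligible tails. This gives
\[
|\mathcal{B}|\lesssim\varepsilon N+\varepsilon\sum_{1\le|h|\le H}\Big|\sum_{n\le N}e(ha_n)\Big|+N^{-10}.
\]
Every integer $h$ with $1\le|h|\le H$ satisfies $N^{-l_1}\le 1\le|h|\le N^{l_2}$ once $\eta,\eta''$ are small compared with $l_2$. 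So \eqref{unflooredgeneric} applies and yields $|\mathcal{B}|\lesssim\varepsilon N+HN^{1-\chi'}\varepsilon$.

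For the main term I need $\sum_{|h|\le H}|\hat{\tilde\phi}_\xi(h)|\,|\sum_{n\le N}e((\xi+h)a_n)|$, and it suffices that each frequency $\xi+h$ lies in $[N^{-l_1},N^{l_2}]$ in absolute value. For $h=0$ this is the hypothesis $|\xi|\ge N^{-l_1}$. For $h\neq0$ we have $|\xi+h|\ge1/2\ge N^{-l_1}$, and $|\xi+h|\le H+1\le N^{l_2}$ for $N$ large. Taking absolute values and using $\sum_{|h|\le H}|\hat{\tilde\phi}_\xi(h)|\lesssim\log H$, we bound the main term by $(\log N)C'N^{1-\chi'}$.

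Collecting everything, the total is
\[
\lesssim (\log N)N^{1-\chi'}+\varepsilon N+N^{\eta'+\eta''}N^{1-\chi'}N^{-\eta'}+1 .
\]
Choosing $\eta'=\chi'/4$ and $\eta''=\min(\chi'/4,l_2/4)$ yields \eqref{goalfloor} with $\chi=\chi'/4$, say, after enlarging $C$ to absorb small $N$. The only delicate point is the bookkeeping that keeps all frequencies inside the admissible window $[N^{-l_1},N^{l_2}]$. This is exactly why $H$ is taken to be a small power of $N$ and why the $h=0$ term uses the assumption $\|\xi\|\ge N^{-l_1}$.
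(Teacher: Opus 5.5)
Your proposal is correct and follows essentially the paper's route. You expand $e(-\xi\{x\})$ into a short trigonometric polynomial, control the $n$ with $a_n$ near an integer by Fourier-expanding a near-integer weight and applying \eqref{unflooredgeneric} at integer frequencies, and bound the main term by applying \eqref{unflooredgeneric} at the frequencies $\xi+h$. The paper does the same with the explicit truncated expansion carrying error $O(\min(1,1/(M\|y\|)))$ and the Fourier series of $\min(1,1/(M\|x\|))$, where you use a smooth cutoff and a smooth majorant.

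One bookkeeping fix is needed. Your final choice $\eta'=\chi'/4$ guarantees $H=N^{\eta'+\eta''}\le N^{l_2}$ only when $\chi'$ is small relative to $l_2$; it fails, for example, if $\chi'\ge 3l_2$. So first replace $\chi'$ by $\min(\chi',l_2)$. This is harmless because the hypothesis survives shrinking $\chi'$, and it is exactly what the paper does when it assumes $\chi'<l_2/4$.
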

\begin{proof}
Fix a sequence of real numbers $(a_n)_{n\in\mathbb{N}}$ and let $l_1,l_2,C',\chi'$ be positive real numbers as in the assumption of the lemma. Without loss of generality we may assume that $\chi'<l_2/4$ since if the condition holds for some $\chi'$, it holds for every smaller positive number. It suffices to establish the estimate $\eqref{goalfloor}$ for $N\gtrsim1$ and $\xi\in\mathbb{R}$ with $N^{-l_1}\le |\xi|\le 1/2$. We will use a standard truncated Fourier series to remove the floor. For every $M\ge 2$, $x\in[-1/2,1/2)\setminus\{0\}$, and $y\in\mathbb{R}$, we have that
\[
e(-x\{y\})=\sum_{|m|\le M}\frac{1-e(-x)}{2\pi i (m+x)}e(my)+O\bigg(\min\bigg(1,\frac{1}{M\|y\|}\bigg)\bigg)\text{,}
\]
where the implied constant is absolute, see page 29 in \cite{LPE}. We apply this for $M=\big\lfloor N^{l_2-\frac{\chi'}{2}}-\frac{1}{2}\big\rfloor$ and obtain
\begin{equation}\label{1stapproxgeneric}
\sum_{n\le N}e(\xi\lfloor a_n\rfloor)=\sum_{n\le N}e(\xi a_n)\sum_{|m|\le M}\frac{1-e(-\xi)}{2\pi i (m+\xi)}e(ma_n)+\sum_{n\le N}e(\xi a_n)g_M(\xi,n)\text{,}
\end{equation}
where $|g_M(\xi,n)|\lesssim \min\Big(1,\frac{1}{M\|a_n\|}\Big)$.

We begin by bounding the second term. Note that 
\begin{equation}\label{ESTerrmin}
\Big|\sum_{n\le N}e(\xi a_n)g_M(\xi,n)\Big|\lesssim \sum_{n\le N} \min\bigg(1,\frac{1}{M\|a_n\|}\bigg)\text{.}
\end{equation}
We now use another standard Fourier series expansion, namely, for every $M\in\mathbb{N}$ and $x\in\mathbb{R}$ we have that
\begin{equation}\label{minexp}
\min\bigg(1,\frac{1}{M\|x\|}\bigg)=\sum_{m\in\mathbb{Z}}b_me(mx)\text{,}\quad\text{where $|b_m|\lesssim \min\bigg(\frac{1+\log M}{M},\frac{M}{|m|^2}\bigg)$, see page 23 in \cite{LPE}. }
\end{equation}
The coefficients $b_m$ also depend on $M$, but we chose to suppress this dependence. We have that
\begin{equation}\label{easyboundfornoncanc}
\sum_{1\le n\le N}\min\bigg(1,\frac{1}{M\|a_n\|}\bigg)=\sum_{1\leq n\le N}\sum_{m\in\mathbb{Z}}b_me(ma_n)\le \sum_{m\in\mathbb{Z}}|b_m|\Big|\sum_{1\leq n\leq N}e(ma_n)\Big|\text{.}
\end{equation}
For $N\gtrsim1$ we may estimate using \eqref{unflooredgeneric} and the bounds for $|b_m|$ in \eqref{minexp} as follows
\begin{equation}\label{easyboundfinal}
\begin{split}
\sum_{1\le n\le N}\min\bigg(1,\frac{1}{M\|a_n\|}\bigg)\le&
\sum_{m\in\mathbb{Z}}|b_m|\Big|\sum_{1\le n\le N}e(ma_n)\Big|\\
=&|b_0|N+\sum_{1\le |m|\le N^{l_2}}|b_m|\Big|\sum_{1\le n\le N}e(ma_n)\Big|+\sum_{|m|> N^{l_2}}|b_m|N
\\
\lesssim&\frac{N\log M}{M}+\sum_{1\le|m|\le N^{l_2}}\frac{\log M}{M}N^{1-\chi'}+\sum_{|m|>N^{l_2}}\frac{M}{|m|^2}N
\\
\lesssim& (\log N)NM^{-1}+(\log N)N^{1-\chi'}M^{-1}N^{l_2}+N MN^{-l_2}\lesssim
\\
\lesssim& (\log N)N^{1-l_2+\frac{\chi'}{2}}+(\log N)N^{1-\chi'}N^{\frac{\chi'}{2}}+N^{1-\frac{\chi'}{2}}\lesssim N^{1-\frac{\chi'}{3}}
\text{.}
\end{split}
\end{equation}
Combining \eqref{ESTerrmin} with \eqref{easyboundfinal} yields that
\begin{equation}\label{S2boundeasy}
\Big|\sum_{n\le N}e(a_n\xi)g_M(\xi,n)\Big|\lesssim N^{1-\frac{\chi'}{3}}\text{,}
\end{equation}
and concludes the estimation of the second summand in $\eqref{1stapproxgeneric}$. 

For the first summand of  \eqref{1stapproxgeneric} we may estimate using once again \eqref{unflooredgeneric} as follows for $N\gtrsim1 $
\begin{equation}\label{S1boundeasy}
\begin{split}
\Big|\sum_{n\le N}e(\xi a_n)&\sum_{|m|\le M}\frac{1-e(-\xi)}{2\pi i (m+\xi)}e(m a_n)\Big|\lesssim \sum_{|m|\le M}\frac{|1-e(-\xi)|}{|m+\xi|}\Big|\sum_{n\le N}e((\xi+m)a_n)\Big|
\\
\lesssim&\Big|\sum_{n\le N}e(\xi a_n)\Big|+\sum_{1\le |m|\le M}\frac{1}{|m|}\Big|\sum_{n\le N}e((\xi+m)a_n)\Big|
\\
\lesssim&\log M\sup_{|m|\le M}\Big|\sum_{n\le N}e((\xi+m)h(n))\Big|
\\
\lesssim& \log N\sup_{N^{-l_1}\le |t|\le M+1/2}\Big|\sum_{n\le N}e(th(n))\Big|
\\
\le& \log N\sup_{N^{-l_1}\le |t|\le N^{l_2}}\Big|\sum_{n\le N}e(th(n))\Big|\lesssim N^{1-\frac{\chi'}{3}}\text{.}
\end{split}
\end{equation}
Choosing $\chi\coloneq\frac{\chi'}{3}$, and taking into account \eqref{1stapproxgeneric}, \eqref{S2boundeasy} and \eqref{S1boundeasy} completes the proof.
\end{proof}
Now the proof of Proposition~$\ref{minarcpropfull}$ is immediate. For the sake of completeness we give a short proof here.

\begin{proof}[Proof of Proposition~$\ref{minarcpropfull}$]
Fix $\psi,\alpha,P,\delta$ as in the statement of the proposition, and without loss of generality assume that $\delta\in(0,1)$. Then Proposition~$\ref{minfull}$ is applicable and yields that there exist positive constants $\chi'=\chi'(\alpha,P,\psi,\delta)$ and $C=C(\alpha,P,\psi,\delta)$ such that for every $N\in\mathbb{N}$ and $\xi\in\mathbb{R}$ with $N^{-d+\delta}\le |\xi|\le N^{1/2}$ we have
\begin{equation*}
\bigg|\sum_{n=1}^Ne\big(\xi P(n)\psi(\alpha n)\big)\bigg|\le CN^{1-\chi'}\text{.}
\end{equation*}
Using this exponential sum estimate and appealing to Lemma~$\ref{floorremovgeneric}$  for the sequence $(P(n)\psi(\alpha n))_{n\in\mathbb{N}}$ with $l_1=d-\delta>0$ and $l_2=1/2$ immediately yields the conclusion of  Proposition~$\ref{minarcpropfull}$.
\end{proof}
\subsection{Minor arc estimates for the approximant.} Finally, let us give a quick proof of the much easier minor arc estimates for the continuous counterpart of the operator, namely, establish the estimate \eqref{minarcestcompleteeasy}. \begin{proposition}\label{psireg}
Let $\psi\colon[0,1]\to\mathbb{R}$ be a continuous function satisfying the following: for every $\varepsilon>0$ there exist positive constants $C_0=C_0(\varepsilon),\nu=\nu(\varepsilon)$ such that for every $t\ge 1$ we have
\begin{equation}\label{psiconditionzeroregion}
\lambda(\{s\in[0,1]:\,|\psi(s)|<t^{-\varepsilon}\})\le C_0t^{-\nu}\text{.}
\end{equation}
Let $P\in\mathbb{R}[x]$ be a polynomial of degree $d\ge 1$. Then for every $\delta>0$ there exist positive constants $C_1=C_1(P,\delta,\psi)$ and $\rho=\rho(d,\delta,\psi)$ such that for every $t\ge 1$ and $|\xi|\ge t^{-d+\delta}$ we have
\begin{equation}\label{minorarccont}
\bigg|\int_0^1\bigg(\frac{1}{t}\int_0^te(\xi P(u)\psi( s))du\bigg)ds\bigg|\le C_1t^{-\rho}\text{.}
\end{equation}
\end{proposition}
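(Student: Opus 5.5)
The plan is to swap the order of integration and treat, for each fixed $s$, the inner integral as a one-dimensional oscillatory integral with polynomial phase $u\mapsto \xi\psi(s)P(u)$. Van der Corput's lemma for the $d$-th derivative then gives, for every $s$ with $\psi(s)\neq0$,
\[
\bigg|\frac1t\int_0^t e\big(\xi\psi(s)P(u)\big)\,du\bigg|\lesssim_{d} \min\Big(1,\frac{1}{t}\big(|\xi||\psi(s)||a_d|\big)^{-1/d}\Big),
\]
where $a_d$ is the leading coefficient of $P$. Here we use $|(\xi\psi(s)P)^{(d)}|=d!|a_d||\xi||\psi(s)|$, which is constant in $u$, so van der Corput applies on the whole interval $[0,t]$ with a constant depending only on $d$.

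Next, fix $\varepsilon=\delta/2$ and split $[0,1]$ into the set $Z_t=\{s:\,|\psi(s)|<t^{-\varepsilon}\}$ and its complement.

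On $Z_t$ we bound the inner average trivially by $1$. By hypothesis \eqref{psiconditionzeroregion}, this contributes at most $C_0(\varepsilon)t^{-\nu(\varepsilon)}$.

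On the complement we have $|\psi(s)|\ge t^{-\delta/2}$ and $|\xi|\ge t^{-d+\delta}$, so $|\xi||\psi(s)|\ge t^{-d+\delta/2}$. The van der Corput bound then becomes
\[
\lesssim_{P} t^{-1}\,t^{(d-\delta/2)/d}=t^{-\delta/(2d)}.
\]
Integrating in $s$ over a set of measure at most $1$ preserves this bound. Combining the two parts, the claim holds with $\rho=\min(\nu(\delta/2),\delta/(2d))$ and $C_1$ depending on $P,\delta,\psi$.

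No genuine obstacle is expected. The only points needing care are the following.
\begin{itemize}
\item The van der Corput constant must be uniform in the coefficients, which holds because the $d$-th derivative of the phase is constant.
\item The case $d=1$ needs the first-derivative test with monotone derivative, which is again automatic since the derivative is constant.
\item Measurability of the split is automatic since $\psi$ is continuous.
\end{itemize}
Large $|\xi|$ poses no issue, since the bound only improves.
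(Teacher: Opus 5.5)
Your proposal is correct and follows the paper's proof essentially step for step. Both split $[0,1]$ at $|\psi(s)|=t^{-\delta/2}$, bound the small set trivially via the hypothesis with $\varepsilon=\delta/2$, apply van der Corput with the constant $d$-th derivative $\xi\psi(s)\,d!\,a_d$ on the complement to get $t^{-\delta/(2d)}$, and take $\rho=\min(\delta/(2d),\nu(\delta/2))$; no interchange of integrals is needed, since the inner integral is already the $u$-integral for fixed $s$.
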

\begin{proof}
Let $P\in\mathbb{R}[x]$ be of degree $d\in\mathbb{N}$ and let $\delta>0$. Then for every $t\ge 1$ we perform the following decomposition
\begin{equation}\label{splitforvdCnew}
\begin{split}
&\int_0^1\bigg(\frac{1}{t}\int_0^te(\xi P(u)\psi( s))du\bigg)ds
\\
&=\int_{\substack{s\in[0,1]:\\|\psi(s)|\ge t^{-\delta/2}}}\bigg(\frac{1}{t}\int_0^te(\xi P(u)\psi( s))du\bigg)ds+\int_{\substack{s\in[0,1]:\\|\psi(s)|< t^{-\delta/2}}}\bigg(\frac{1}{t}\int_0^te(\xi P(u)\psi( s))du\bigg)ds\text{.}
\end{split}
\end{equation} 
We estimate the second summand using the trivial estimate for the integrand and the assumption for $\psi$ with $\delta/2>0$, so as to obtain two positive real numbers $C_0=C_0(\delta/2)$ and $\nu=\nu(\delta/2)$ such that
\begin{equation}\label{trivialestnew}
\bigg|\int_{\substack{s\in[0,1]:\\|\psi(s)|< t^{-\delta/2}}}\Big(t^{-1}\int_0^te(\xi P(u)\psi( s))du\Big)ds\bigg|\le \lambda\Big(\Big\{s\in[0,1]:\,|\psi(s)|<t^{-\frac{\delta}{2}}\Big\}\Big) \le C_0t^{-\nu}\text{.}
\end{equation}
For the first, we use van der Corput since the $d$-th derivative of the phase can be easily calculated 
\[
\big(\xi P(\cdot)\psi( s)\big)^{(d)}=\xi\psi(s)C_Pd!\text{,}\quad\text{where $C_P\neq 0$ is the leading coefficient of $P$,}
\]
and thus for every $\xi\in\mathbb{R}$ with $|\xi|\ge t^{-d+\delta}$ and $s\in[0,1]$ with $|\psi(s)|\ge t^{-\delta/2}$ we have
\[
\Big|\big(\xi P(\cdot)\psi( s)\big)^{(d)}\Big|\ge t^{-d+\delta/2}d!|C_P|\text{.}
\]
Van der Corput's lemma, see Proposition~2.6.7(b) in \cite{Grafakos}, yields that for every such $\xi$ and $s$ we have 
\begin{equation}\label{vdCoutputnew}
\bigg|\frac{1}{t}\int_0^te(\xi P(u)\psi( s))du\bigg|\le t^{-1}12d\big(t^{-d+\delta/2}d!|C_P|\big)^{-1/d}=12d\big(d!|C_P|\big)^{-1/d}\cdot t^{-\frac{\delta}{2d}}\text{.}
\end{equation}
By \eqref{splitforvdCnew}, \eqref{trivialestnew} and \eqref{vdCoutputnew}, we obtain the desired estimate for $\rho=\rho(d,\delta,\psi)\coloneq \min\big(\frac{\delta}{2d},\nu\big(\frac{\delta}{2}\big)\big)>0$ since
\begin{equation*}
\Big|\int_0^1\Big(t^{-1}\int_0^te(\xi P(u)\psi( s))du\Big)ds\Big|\lesssim_{P,\delta,\psi}t^{-\frac{\delta}{2d}}+t^{-\nu} \lesssim_{P,\delta,\psi}t^{-\rho}\text{,}\quad\text{and the proof of \eqref{minorarccont} is complete.}
\end{equation*}
\end{proof}
\subsection{Concluding the one-frequency circle method.}
We end the present section by combining the major and minor arc analysis estimates and giving a quick proof of the approximation result of Proposition~$\ref{fam}$.
\begin{proof}[Proof of Proposition~$\ref{fam}$]
By choosing $\delta\coloneq \frac{1}{2(2\omega-1)}>0$ both Proposition~$\ref{Majarcprop}$ and Proposition~$\ref{minarcpropfull}$ are applicable, and yield the estimates \eqref{majarcestcomplete} and \eqref{minarcestcompletedif}, respectively. For the easier estimate \eqref{minarcestcompleteeasy}, we note that since, by assumption, $\psi$ is a smooth $1$-periodic function such that none of its derivatives has a root of infinite order, we get, in particular, that $\psi'$ does not have a root of infinite order. Lemma~$\ref{elementarybutdifficult}$ is applicable, and shows that $\psi$ satisfies the condition $\ref{psiconditionzeroregion}$. In turn, Proposition~$\ref{psireg}$ may be applied and immediately yields \eqref{minarcestcompleteeasy} for our choice of $\delta$. Clearly, the existence of a positive $\delta=\delta(\omega)$ making \eqref{majarcestcomplete}, \eqref{minarcestcompletedif} and \eqref{minarcestcompleteeasy} hold, immediately yields
\eqref{fullestcomplete}, completing the proof of Proposition~$\ref{fam}$. 
\end{proof} 
\section{Variational estimates for the continuous counterpart}\label{contandconclude}
In this short section we give a simple proof of the $L^p(\mathbb{R})$-boundedness of the $r$-variations for the continuous counterpart of the approximant operator. More precisely, we prove the following. 
\begin{proposition}\label{contvarprop}
Let $\psi\colon[0,1]\to \mathbb{R}$ be a continuous\footnote{We opted for a formulation of the result requiring $\psi$ to be continuous to avoid issues of measurability.} function and let $P\in\mathbb{R}[x]$ be a polynomial of degree $d\ge 1$. For every $N\in\mathbb{N}$, $s\in[0,1]$, $f\in L^1_{\normalfont{\text{loc}}}(\mathbb{R})$ and $x\in\mathbb{R}$, let
\begin{equation}\label{Atdef}
B_{N,s}f(x)\coloneq\frac{1}{N}\int_{0}^Nf\big(x-\psi(s)P(u)\big)du\quad\text{and}\quad B_{N}f(x)\coloneq\int_0^1B_{N,s}f(x)ds\text{.}
\end{equation}
Then for every $p\in(1,\infty)$ and $r\in(2,\infty)$ there exists a positive constant $C=C(p,d)$ such that for every $f\in L^p(\mathbb{R})$ we have
\begin{equation}
\big\|V^r(B_{N}f:\,N\in\mathbb{N})\big\|_{L^p(\mathbb{R})}\le C\frac{r}{r-2}\|f\|_{L^p(\mathbb{R})}\text{.}
\end{equation}
\end{proposition}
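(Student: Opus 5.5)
The plan is to reduce the estimate to a uniform (coefficient-free) variational bound for each single Radon-type family $(B_{N,s}f:\,N\in\mathbb{N})$ and then average in $s$ via Minkowski's inequality.

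The key input I would take from the literature on coefficient-free estimates (\cite{MT}, lifting argument) is the following. For every polynomial $Q\in\mathbb{R}[x]$ of degree at most $d$, $p\in(1,\infty)$ and $r\in(2,\infty)$,
\[
\Big\|V^r\Big(\frac{1}{N}\int_0^N f\big(x-Q(u)\big)du:\,N\in\mathbb{N}\Big)\Big\|_{L^p(\mathbb{R})}\le C(p,d)\frac{r}{r-2}\|f\|_{L^p(\mathbb{R})},
\]
and the constant $C(p,d)$ does not depend on the coefficients of $Q$. The degenerate cases need a separate check.
\begin{enumerate}
\item[(a)] If $Q$ is constant, the averages do not depend on $N$, so the $r$-variation vanishes.
\item[(b)] When $\psi(s)=0$ we get $Q=\psi(s)P\equiv 0$, which is case (a). So the bound holds for $Q=\psi(s)P$ uniformly in $s\in[0,1]$.
\end{enumerate}
If one only has the estimate for monomial-type or homogeneous lifts, I would derive the general statement by the standard lifting. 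Write $Q(u)=\sum_{j\le d}c_ju^j$ and view $f(x-Q(u))$ as the restriction of $F(x,y_1,\dots,y_d)=f(x-\sum_j c_jy_j)$ to the moment curve $(u,u^2,\dots,u^d)$ in $\mathbb{R}^{d+1}$. Then use the $L^p(\mathbb{R}^{d+1})$ variational bound for averages along the moment curve, localized in $y$, à la Calder\'on transference, so that the coefficients disappear. This step is the main obstacle; I would quote it from \cite{MT} rather than reprove it.

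Given this, the argument goes as follows. By continuity of $\psi$, the map $(s,x)\mapsto B_{N,s}f(x)$ is jointly measurable, and $B_Nf=\int_0^1B_{N,s}f\,ds$. For any increasing sequence $N_0<\dots<N_J$, the triangle inequality in $\ell^r$ (Minkowski for the $\ell^r$-valued integral) gives
\[
\Big(\sum_{j}|B_{N_j}f(x)-B_{N_{j-1}}f(x)|^r\Big)^{1/r}\le\int_0^1\Big(\sum_j|B_{N_j,s}f(x)-B_{N_{j-1},s}f(x)|^r\Big)^{1/r}ds,
\]
hence pointwise
\[
V^r(B_Nf(x):N\in\mathbb{N})\le\int_0^1V^r(B_{N,s}f(x):N\in\mathbb{N})\,ds.
\]
To make the sup over sequences measurable, I would restrict to finite sets of $N$ and use monotone convergence.

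Next I apply Minkowski's integral inequality in $L^p(\mathbb{R})$:
\[
\|V^r(B_Nf)\|_{L^p}\le\int_0^1\|V^r(B_{N,s}f)\|_{L^p}\,ds\le C(p,d)\frac{r}{r-2}\|f\|_{L^p},
\]
using the uniform bound for each $Q=\psi(s)P$. This gives the proposition with $C=C(p,d)$, independent of $\psi$.
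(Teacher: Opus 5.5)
Your proposal is correct and follows the paper's proof essentially step for step. The steps are the pointwise Minkowski inequality in $\ell^r$ to move the variation inside the $s$-integral, Minkowski's integral inequality in $L^p(\mathbb{R})$, and the coefficient-free variational bound for Radon-type averages (the paper cites Theorem~A.1 of \cite{marvar}), with the $s$ for which $\psi(s)=0$ contributing nothing. The one detail the paper makes explicit that you leave implicit is that the cited bound is stated for $Q(0)=0$. So one first translates $x\mapsto x+\psi(s)P(0)$ and uses translation invariance of the $L^p$ norm to pass to $Q=\psi(s)\big(P-P(0)\big)$.
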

\begin{proof}
The proof relies on the fact that for every polynomial $Q\in \mathbb{R}[x]$ of degree $d\in\mathbb{N}$ with $Q(0)=0$, every $p\in(1,\infty)$ and every $r\in(2,\infty)$ there exists a constant $C(p,d)$ such that
\begin{equation}\label{marvarinput}
\big\|V^r\big(\mathcal{M}^Q_tf:\,t>0\big)\big\|_{L^p(\mathbb{R})}\le C(p,d)\frac{r}{r-2}\|f\|_{L^p(\mathbb{R})}\text{,}\quad\text{where}\quad
\mathcal{M}^Q_tf(x)=\frac{1}{t}\int_0^tf(x-Q(u))du\text{,}
\end{equation}
see Theorem~A.1. in \cite{marvar}. The crucial part here is that the constant does not depend on the coefficients. Firstly, we note that for every $r>2$ and $x\in\mathbb{R}$ we have
\begin{equation}
\begin{split}
V^r(B_{N}f(x):\,N\in\mathbb{N})=&\sup_{J\in\mathbb{N}}\sup_{t_0<\cdots<t_J}\Big(\sum_{j=1}^J|B_{t_j}f(x)-B_{t_{j-1}}f(x)|^r\Big)^{1/r}
\\
=&\sup_{J\in\mathbb{N}}\sup_{t_0<\cdots<t_J}\Big(\sum_{j=1}^J\Big|\int_0^1\big(B_{t_j,s}f(x)-B_{t_{j-1},s}f(x)\big)ds\Big|^r\Big)^{1/r}
\\
\le&\sup_{J\in\mathbb{N}}\sup_{t_0<\cdots<t_J}\int_0^1\Big(\sum_{j=1}^J\big|B_{t_j,s}f(x)-B_{t_{j-1},s}f(x)\big|^r\Big)^{1/r}ds
\\
\le&\int_0^1\sup_{J\in\mathbb{N}}\sup_{t_0<\cdots<t_J}\Big(\sum_{j=1}^J\big|B_{t_j,s}f(x)-B_{t_{j-1},s}f(x)\big|^r\Big)^{1/r}ds
\\
=&\int_{\substack{s\in[0,1]:\\\psi(s)\neq0}}\sup_{J\in\mathbb{N}}\sup_{t_0<\cdots<t_J}\Big(\sum_{j=1}^J\big|B_{t_j,s}f(x)-B_{t_{j-1},s}f(x)\big|^r\Big)^{1/r}ds\text{,}
\end{split}
\end{equation} 
since for $s\in[0,1]$ such that $\psi(s)=0$, we have that $B_{t,s}f(x)-B_{t',s}f(x)=0$ for every $t,t'\in\mathbb{N}$. Finally, we note that for every $p\in(1,\infty)$ we have
\begin{equation*}
\begin{split}
\big\|V^r\big(B_{N}f:\,N\in\mathbb{N}\big)\big\|_{L^p(\mathbb{R})}\le \bigg(\int_{\mathbb{R}}\bigg|\int_{\substack{s\in[0,1]:\\\psi(s)\neq0}}\sup_{J\in\mathbb{N}}\sup_{t_0<\cdots<t_J}\Big(\sum_{j=1}^J\big|B_{t_j,s}f(x)-B_{t_{j-1},s}f(x)\big|^r\Big)^{1/r}ds\bigg|^pdx\bigg)^{1/p}&
\\
\le\int_{\substack{s\in[0,1]:\\\psi(s)\neq0}}\bigg(\int_{\mathbb{R}}\bigg|\sup_{J\in\mathbb{N}}\sup_{t_0<\cdots<t_J}\Big(\sum_{j=1}^J\big|B_{t_j,s}f(x)-B_{t_{j-1},s}f(x)\big|^r\Big)^{1/r}dx\bigg|^p\bigg)^{1/p}ds &
\\
=\int_{\substack{s\in[0,1]:\\\psi(s)\neq0}}\bigg(\int_{\mathbb{R}}\bigg|\sup_{J\in\mathbb{N}}\sup_{t_0<\cdots<t_J}\Big(\sum_{j=1}^J\big|B_{t_j,s}f(x+\psi(s)P(0))-B_{t_{j-1},s}f(x+\psi(s)P(0))\big|^r\Big)^{1/r}dx\bigg|^p\bigg)^{1/p}ds&
\\
=\int_{\substack{s\in[0,1]:\\\psi(s)\neq0}}\big\|V^r\big(\mathcal{M}^{\psi(s)(P(\cdot)-P(0))}_tf:\,t\in\mathbb{N}\big)\big\|_{L^p(\mathbb{R})}ds &
\\
\le\int_{\substack{s\in[0,1]:\\\psi(s)\neq0}}C(p,d)\frac{r}{r-2}\|f\|_{L^p(\mathbb{R})}ds\le C(p,d)\frac{r}{r-2}\|f\|_{L^p(\mathbb{R})}&\text{,}
\end{split}
\end{equation*}
where we have used \eqref{marvarinput}, the fact that for every $s\in[0,1]$, $t\in\mathbb{N}$, and $x\in\mathbb{R}$ we have
\begin{equation}
\begin{split}
B_{t,s}f(x+\psi(s)P(0))=&\frac{1}{t}\int_{0}^tf\big(x+\psi(s)P(0)-\psi(s)P(u)\big)du
\\
=&\frac{1}{t}\int_{0}^tf\big(x-\psi(s)(P(u)-P(0))\big)du=\mathcal{M}^{\psi(s)(P(\cdot)-P(0))}_tf(x)\text{,}
\end{split}
\end{equation}
and the fact that $\psi(s)(P(\cdot)-P(0))$ is a polynomial with no constant coefficient and of degree $d$ for every $s\in[0,1]$ such that $\psi(s)\neq0$.
\end{proof}
\section{Proof of Theorem~$\ref{qMain}$}
The present and final section is devoted to the proof of Theorem~$\ref{qMain}$. The main tools are, on the one hand, the output of the one-frequency circle method, see Proposition~$\ref{fam}$, and, on the other hand, the continuous $r$-variational estimates for the approximant operator, see Proposition~$\ref{contvarprop}$. These two ingredients yield the corresponding quantitative pointwise ergodic theorem of Theorem~$\ref{qMain}$, through delicate but, nevertheless, standard arguments. The process of utilizing such inputs to deduce variational estimates (in the context of lacunary times) has been carefully carried out in \cite{frld}, see Section~4 therein. For the sake of completeness we give a quick proof-sketch here, essentially following the aforementioned section.   

We use the following lemma to treat the ``so-called'' short variations.
\begin{lemma}\label{lemmashort}
Let $p\in(1,\infty)$ and assume $\big(T_n\colon\ell^p(\mathbb{Z})\to\ell^p(\mathbb{Z})\big)_{n\in\mathbb{N}}$ is a family of operators satisfying
\[
\sup_{n\in\mathbb{N}}\frac{n}{\log (2n)}\|T_{n+1}-T_{n}\|_{\ell^p(\mathbb{Z})\to\ell^p(\mathbb{Z})}\eqcolon M<\infty\text{.}
\]
Then there exist positive constants $C=C(M,p)$ and $\tau=\tau(p)\in(0,1)$ such that
\begin{equation}\label{Shortgen}
\bigg\|\bigg(\sum_{k=0}^{\infty}V_2\Big(T_nf:\,n\in\big[\big\lfloor 2^{k^{\tau}}\big\rfloor,\big\lfloor 2^{(k+1)^{\tau}}\big\rfloor\big)\Big)^2\bigg)^{1/2}\bigg\|_{\ell^p(\mathbb{Z})}\le C\|f\|_{\ell^{p}(\mathbb{Z})}\text{.}
 \end{equation}
\end{lemma}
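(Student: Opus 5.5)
We prove Lemma~\ref{lemmashort} by the standard telescoping argument: a sum of increments over a short block, combined with the Rademacher--Menshov-type bound $V_2\le V_1$ and a dyadic-in-length decomposition. Fix $\tau\in(0,1)$ to be chosen, and for each $k$ let $I_k=[\lfloor 2^{k^\tau}\rfloor,\lfloor 2^{(k+1)^\tau}\rfloor)$.

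\textbf{Step 1: reduce to increments.} Since the $\ell^2$-variation over an interval is dominated by the $\ell^1$-variation, we have
\[
V_2(T_nf:\,n\in I_k)\le \sum_{n\in I_k}|T_{n+1}f-T_nf|.
\]
Minkowski's inequality in $\ell^p(\ell^2)$ followed by the triangle inequality then gives
\[
\Big\|\Big(\sum_k V_2(\cdots)^2\Big)^{1/2}\Big\|_{\ell^p}\le \sum_k\sum_{n\in I_k}\|T_{n+1}-T_n\|_{\ell^p\to\ell^p}\|f\|_{\ell^p}.
\]
This uses $\ell^p(\ell^2)\supset$ sum of norms, i.e.\ $\|(\sum_k a_k^2)^{1/2}\|_p\le\sum_k\|a_k\|_p$. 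However, this crude bound gives $\sum_n \log(2n)/n=\infty$, so it fails, and a sharper mechanism is needed.

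\textbf{Step 2: the better bound.} The key is to gain from the shortness of the blocks through interpolation between $\ell^2$ square-function-type control and the increment bound. We first note that $|I_k|\approx 2^{k^\tau}\tau k^{\tau-1}$, so $\sum_{n\in I_k}\log(2n)/n\lesssim k^{\tau}\cdot k^{\tau-1}=k^{2\tau-1}$. The decisive point is the choice of $\tau<1/2$: then $\sum_k k^{2\tau-1}$ still diverges (harmonic-type, since $2\tau-1>-1$), so we cannot simply sum over $k$. Instead, we keep the $\ell^2$ summation over $k$. Pointwise, we use Cauchy--Schwarz on each block with the weights $w_n=n/\log(2n)$:
\[
V_2(\cdots)^2\le\Big(\sum_{n\in I_k}w_n^{-1}\Big)\sum_{n\in I_k}w_n|T_{n+1}f-T_nf|^2\lesssim k^{2\tau-1}\sum_{n\in I_k}w_n|(T_{n+1}-T_n)f|^2.
\]
Summing over $k$ and taking the $\ell^p$ norm, for $p\ge 2$ we apply the triangle inequality in $\ell^{p/2}$, which yields
\[
\Big(\sum_k k^{2\tau-1}\sum_{n\in I_k}w_n\|(T_{n+1}-T_n)f\|_p^2\Big)^{1/2}\lesssim M\|f\|_p\Big(\sum_k k^{2\tau-1}\sum_{n\in I_k}w_n^{-1}\Big)^{1/2}\lesssim M\|f\|_p\Big(\sum_k k^{4\tau-2}\Big)^{1/2}.
\]
This sum converges if $\tau<1/4$. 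For $1<p<2$, we use instead $\|(\sum a_k^2)^{1/2}\|_p\le\|(\sum a_k^p)^{1/p}\|_p$ and Hölder on each block, with exponent $p$: this gives $V_2^p\le(\sum_{I_k} w_n^{-p'/p}\cdots)$, and similarly $\sum_k k^{(2\tau-1)p}\cdot$ (bounded quantities). The resulting sum converges when $\tau$ is small, in the form $p(2\tau-1)<-1$ with $\tau<\tfrac12-\tfrac1{2p}$. Hence we choose $\tau=\tau(p)$ accordingly.

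\textbf{Main obstacle.} The main obstacle is the bookkeeping of $|I_k|$ and $\sum_{n\in I_k}\log n/n\simeq k^{2\tau-1}$ for small $k$ (where the blocks may be empty or degenerate), together with the choice of the correct Hölder exponent in the range $p<2$. The endpoint $k$'s are handled by noting that finitely many blocks contribute $O(M\|f\|_p)$ trivially.
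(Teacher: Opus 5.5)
Your argument is correct. It is the standard elementary proof of this statement; the paper gives no argument of its own and simply cites Proposition~4.1 of \cite{UA}.

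With $I_k=[\lfloor 2^{k^\tau}\rfloor,\lfloor 2^{(k+1)^\tau}\rfloor)$, bounding each block variation by the sum of its increments and applying Minkowski already gives, for $k\ge 1$,
$\|V_2(T_nf:\,n\in I_k)\|_{\ell^p}\le M\sum_{n\in I_k}\frac{\log(2n)}{n}\|f\|_{\ell^p}\lesssim M k^{2\tau-1}\|f\|_{\ell^p}$.
So the weighted Cauchy--Schwarz/H\"older step is unnecessary. Summing over $k$ via Minkowski in $\ell^{p/2}$ for $p\ge 2$, or via $\ell^p\subset\ell^2$ for $p<2$, yields exactly your conditions $\tau<1/4$ and $\tau<\frac12-\frac1{2p}$.

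Your remark that the choice $\tau<1/2$ is ``decisive'' is a red herring, since $\sum_k k^{2\tau-1}$ diverges for every $\tau>0$. It does not affect the proof.
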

\begin{proof}
We refer the reader to Proposition~4.1 in \cite{UA}.
\end{proof}
\begin{proof}[Proof of Theorem~$\ref{qMain}$]The variational estimate readily implies the $\mu$-a.e. convergence of the ergodic averages, and by the Calder\'on transference principle it suffices to establish it for the integer shift system, namely, to establish the following estimate for $p\in(1,\infty)$ and $r\in(2,\infty)$
\begin{equation}\label{vrint}
\big\|V^r\big(A_Nf:\,N\in\mathbb{N}\big)\big\|_{\ell^p(\mathbb{Z})}\lesssim_{p,\alpha,\psi,P}\frac{r}{r-2}\|f\|_{\ell^p(\mathbb{Z})}\text{,}
\end{equation}
where
\[
A_Nf(x)\coloneq \frac{1}{N}\sum_{n=1}^Nf\big(x-\lfloor P(n)\psi(\alpha n)\rfloor\big)\text{.}
\]
Let us fix $\alpha,\psi,P$ as in the hypotheses of the theorem and $p\in(1,\infty)$. All implicit constants may depend on these fixed choices without further mention.
\smallskip
\paragraph{\textbf{Step 1: Passing to sublacunary sequences}} It is not difficult to check that
\begin{equation}
\label{eq:17}
\big\|A_{N+1}f-A_{N}f\big\|_{\ell^p(\mathbb{Z})}
\lesssim N^{-1}\|f\|_{\ell^p(\mathbb{Z})}\text{,}
\end{equation}
and letting $\tau=\tau(p)\in(0,1)$ be the real number Lemma~$\ref{lemmashort}$ affords us, we begin by splitting the variations into long and short as follows for every $r\in(2,\infty)$
\begin{equation}\label{splitls}
\begin{split}
\big\|V^r\big(A_{N}f:\,N\in\mathbb{N}\big)\big\|_{\ell^p(\mathbb{Z})}\lesssim
&\big\|V^r\big(A_{\lfloor 2^{k^{\tau}}\rfloor}f:\,k\in\mathbb{N}_0\big)\big\|_{\ell^p(\mathbb{Z})}
\\
+&\bigg\|\bigg(\sum_{k=0}^{\infty}V^2\Big(A_{N}f:\,N\in\big[\big\lfloor 2^{k^{\tau}}\big\rfloor,\big\lfloor 2^{(k+1)^{\tau}}\big\rfloor\big)\cap\mathbb{N}\Big)^2\bigg)^{1/2}\bigg\|_{\ell^p(\mathbb{Z})}\text{.}
\end{split}
\end{equation}
We can now immediately bound the second summand by the aforementioned lemma, and obtain that
\begin{equation}\label{finalsplitlongonly}
\big\|V^r\big(A_{N}f:\,N\in\mathbb{N}\big)\big\|_{\ell^p(\mathbb{Z})}\lesssim
\big\|V^r\big(A_{\lfloor 2^{k^{\tau}}\rfloor}f:\,k\in\mathbb{N}_0\big)\big\|_{\ell^p(\mathbb{Z})}
+\|f\|_{\ell^p(\mathbb{Z})}\text{.}
\end{equation}
It suffices to treat the long oscillations, namely, to bound the first summand above.
\smallskip
\paragraph{\textbf{Step 2: Transferring the continuous variational estimate}}
For every $N\in\mathbb{N}$ and $\xi\in\mathbb{R}$ we let for convenience
\[
\mathcal{L}_N(\xi)\coloneq\int_0^1\bigg(\frac{1}{N}\int_0^Ne(\xi\psi(s)P(u))du\bigg)ds\text{,}
\]
and it is not difficult to see that
\begin{equation}
B_Nf\coloneq \mathcal{F}^{-1}_{\mathbb{R}}\big[\mathcal{L}_N\mathcal{F}_{\mathbb{R}}[f]\big]\quad\text{for every $f\in L^p(\mathbb{R})\cap L^2(\mathbb{R})$, where $B_Nf$ is defined in \eqref{Atdef}.}
\end{equation}
By Proposition~$\ref{contvarprop}$ we get that for every $f\in L^p(\mathbb{R})\cap L^2(\mathbb{R})$ we have
\begin{equation}\label{contmultvar}
\big\|V^r\big(\mathcal{F}^{-1}_{\mathbb{R}}\big[\mathcal{L}_N\mathcal{F}_{\mathbb{R}}[f]\big]:\,N\in\mathbb{N}\big)\big\|_{L^p(\mathbb{R})}=\big\|V^r(B_Nf:\,N\in\mathbb{N})\big\|_{L^p(\mathbb{R})}\lesssim \frac{r}{r-2}\|f\|_{L^p(\mathbb{R})}\text{.}
\end{equation}
We wish to apply Proposition~3.1 from \cite{marvar} so as to use this estimate to obtain one over $\mathbb{Z}$. To this end, we fix $\eta\in\mathcal{C}^{\infty}_c(\mathbb{R})$ be such that $1_{[-1/32,1/32]}\le \eta\le 1_{[-1/16,1/16]}$ and such that it is a convolution of two nonnegative smooth functions with compact supports contained in $(-1/16,1/16)$. By Proposition~3.1 in \cite{marvar} (applied for $Q=1$, $m=0$), the estimate \eqref{contmultvar} may be transferred to $\ell^p(\mathbb{Z})$, and we obtain
\begin{equation}\label{rvarforZforapproximant}
\begin{split}
\big\|V^r\big(\mathcal{F}^{-1}_{\mathbb{Z}}\big[\mathcal{L}_{N}\eta\mathcal{F}_{\mathbb{Z}}[f]\big]:\,N\in\mathbb{N}\big)\big\|_{\ell^p(\mathbb{Z})}
\lesssim\frac{r}{r-2} \big\|\mathcal{F}^{-1}_{\mathbb{Z}}\big[\eta\mathcal{F}_{\mathbb{Z}}[f]\big]\big\|_{\ell^p(\mathbb{Z})}
=\frac{r}{r-2} \|\mathcal{F}^{-1}_{\mathbb{Z}}[\eta]*f\|_{\ell^p(\mathbb{Z})}&
\\
\le\frac{r}{r-2} \|\mathcal{F}^{-1}_{\mathbb{Z}}[\eta]\|_{\ell^1(\mathbb{Z})}\|f\|_{\ell^p(\mathbb{Z})}\lesssim\frac{r}{r-2}\|f\|_{\ell^p(\mathbb{Z})}
&\text{,}
\end{split}
\end{equation}
where we have identified $\mathbb{T}$ with $[-1/2,1/2)$, and we have used Young's convolution inequality and the fact that $\eta$ is smooth so as to obtain $|\mathcal{F}^{-1}_{\mathbb{Z}}[\eta](n)|\lesssim_{\eta}n^{-2}$, see Theorem~3.3.9 in \cite{Grafakos}, for example.
\smallskip
\paragraph{\textbf{Step 3: Using our multiplier approximation to conclude.}} For every $N\in\mathbb{N}$ and $\xi\in[-1/2/1,2)$ let for convenience
\[
\mathfrak{m}_N(\xi)\coloneq\frac{1}{N}\sum_{n=1}^Ne\big(\xi\lfloor P(n)\psi(\alpha n)\rfloor\big)\text{,}
\] 
and let $\delta=\delta(\omega)>0$ and $\chi=\chi(\alpha,P,\psi)$ be such \eqref{majarcestcomplete} ,  \eqref{minarcestcompletedif} and \eqref{minarcestcompleteeasy} hold, i.e.: such that 
\begin{equation}\label{majarcestcompletenew}
\sup_{|\xi|\le N^{-d+\delta}}|\mathfrak{m}_N(\xi)-\mathcal{L}_N(\xi)|\lesssim N^{-\chi}\text{,}
\end{equation}
\begin{equation}\label{minarcestcompletenew}
\sup_{N^{-d+\delta}<|\xi|\le1/2}|\mathfrak{m}_N(\xi)|\lesssim N^{-\chi}
\quad\text{and}\quad
\sup_{N^{-d+\delta}<|\xi|\le1/2}|\mathcal{L}_N(\xi)|\lesssim N^{-\chi}\text{.}
\end{equation}
Using this approximation together with the estimate \eqref{rvarforZforapproximant} we may bound the first summand in \eqref{finalsplitlongonly} as follows
\begin{equation}\label{rvarfinal}
\begin{split}
\big\|V^r\big(A_{\lfloor 2^{k^{\tau}}\rfloor}f:\,k\in\mathbb{N}_0\big)\big\|_{\ell^p(\mathbb{Z})}=\big\|V^r\big(\mathcal{F}^{-1}_{\mathbb{Z}}\big[\mathfrak{m}_{\lfloor 2^{k^{\tau}}\rfloor}\mathcal{F}_{\mathbb{Z}}[f]\big]:\,k\in\mathbb{N}_0\big)\big\|_{\ell^p(\mathbb{Z})}&
\\
\le\big\|V^r\big(\mathcal{F}^{-1}_{\mathbb{Z}}\big[\mathcal{L}_{\lfloor 2^{k^{\tau}}\rfloor}\eta\mathcal{F}_{\mathbb{Z}}[f]\big]:\,k\in\mathbb{N}_0\big)\big\|_{\ell^p(\mathbb{Z})}+\big\|V^r\big(\mathcal{F}^{-1}_{\mathbb{Z}}\big[\big(\mathfrak{m}_{\lfloor 2^{k^{\tau}}\rfloor}-\mathcal{L}_{\lfloor 2^{k^{\tau}}\rfloor}\eta\big)\mathcal{F}_{\mathbb{Z}}[f]\big]:\,k\in\mathbb{N}_0\big)\big\|_{\ell^p(\mathbb{Z})}&
\\
\lesssim\big\|V^r\big(\mathcal{F}^{-1}_{\mathbb{Z}}\big[\mathcal{L}_{N}\eta\mathcal{F}_{\mathbb{Z}}[f]\big]:\,N\in\mathbb{N}\big)\big\|_{\ell^p(\mathbb{Z})}+\Big\|\Big(\sum_{k\in\mathbb{N}_0}\big|\mathcal{F}^{-1}_{\mathbb{Z}}\big[\big(\mathfrak{m}_{\lfloor 2^{k^{\tau}}\rfloor}-\mathcal{L}_{\lfloor 2^{k^{\tau}}\rfloor}\eta\big)\mathcal{F}_{\mathbb{Z}}[f]\big]\big|^r\Big)^{1/r}\Big\|_{\ell^p(\mathbb{Z})}&
\\
\lesssim\frac{r}{r-2}\|f\|_{\ell^p(\mathbb{Z})}+\Big\|\sum_{k\in\mathbb{N}_0}\big|\mathcal{F}^{-1}_{\mathbb{Z}}\big[\big(\mathfrak{m}_{\lfloor 2^{k^{\tau}}\rfloor}-\mathcal{L}_{\lfloor 2^{k^{\tau}}\rfloor}\eta\big)\mathcal{F}_{\mathbb{Z}}[f]\big]\big|\Big\|_{\ell^p(\mathbb{Z})}
\\
\le\frac{r}{r-2}\|f\|_{\ell^p(\mathbb{Z})}+\sum_{k\in\mathbb{N}_0}\big\|\mathcal{F}^{-1}_{\mathbb{Z}}\big[\big(\mathfrak{m}_{\lfloor 2^{k^{\tau}}\rfloor}-\mathcal{L}_{\lfloor 2^{k^{\tau}}\rfloor}\eta\big)\mathcal{F}_{\mathbb{Z}}[f]\big]\big\|_{\ell^p(\mathbb{Z})}&\text{,}
\end{split}
\end{equation}
and to conclude it suffices to bound the final summand. This will be carried out using interpolation and the estimates \eqref{majarcestcompletenew} and \eqref{minarcestcompletenew}.

More precisely, for $p=2$ and every $k\in\mathbb{N}_0$ we get by Plancherel that
\begin{equation}\label{l2first}
\begin{split}
\big\|\mathcal{F}^{-1}_{\mathbb{Z}}\big[\big(\mathfrak{m}_{\lfloor 2^{k^{\tau}}\rfloor}-\mathcal{L}_{\lfloor 2^{k^{\tau}}\rfloor}\eta\big)\mathcal{F}_{\mathbb{Z}}[f]\big]\big\|_{\ell^2(\mathbb{Z})}
\le\big\|\mathfrak{m}_{\lfloor 2^{k^{\tau}}\rfloor}-\mathcal{L}_{\lfloor 2^{k^{\tau}}\rfloor}\eta\big\|_{L^{\infty}(\mathbb{T})}\|f\|_{\ell^2(\mathbb{Z})}\text{.}
\end{split}
\end{equation}
We claim that 
\begin{equation}\label{comparisonmleta}
\big\|\mathfrak{m}_{\lfloor 2^{k^{\tau}}\rfloor}-\mathcal{L}_{\lfloor 2^{k^{\tau}}\rfloor}\eta\big\|_{L^{\infty}(\mathbb{T})}\lesssim 2^{-\chi k^{\tau}}\text{,}
\end{equation}
and using the trivial estimates   $\big\|\mathfrak{m}_{\lfloor 2^{k^{\tau}}\rfloor}\big\|_{L^{\infty}(\mathbb{T})}\le 1$ and $\big\|\mathcal{L}_{\lfloor 2^{k^{\tau}}\rfloor}\eta\big\|_{L^{\infty}(\mathbb{T})}\le 1$ it suffices to establish \eqref{comparisonmleta} for $k\gtrsim 1$. In particular, let us assume that $k\gtrsim1 $ is such that $\lfloor 2^{k^{\tau}}\rfloor^{-d+\delta}\le 1/32$. Then for every $|\xi|\le 1/32$ we have 
\begin{equation}\label{eqq1}
\big|\mathfrak{m}_{\lfloor 2^{k^{\tau}}\rfloor}(\xi)-\mathcal{L}_{\lfloor 2^{k^{\tau}}\rfloor}(\xi)\eta(\xi)\big|=\big|\mathfrak{m}_{\lfloor 2^{k^{\tau}}\rfloor}(\xi)-\mathcal{L}_{\lfloor 2^{k^{\tau}}\rfloor}(\xi)\big|\lesssim 2^{-\chi k^{\tau}}\text{,}
\end{equation}
by $
\sup_{|\xi|\le 1/2}|\mathfrak{m}_N(\xi)-\mathcal{L}_N(\xi)|\lesssim N^{-\chi}
$, which clearly holds by \eqref{majarcestcompletenew} and \eqref{minarcestcompletenew}. On the other hand, for every $\xi\in[-1/2,1/2)$ such that $|\xi|>1/32$ we get for $k\gtrsim 1$ that
\begin{equation}\label{eqq2}
\big|\mathfrak{m}_{\lfloor 2^{k^{\tau}}\rfloor}(\xi)-\mathcal{L}_{\lfloor 2^{k^{\tau}}\rfloor}(\xi)\eta(\xi)\big|\le \big|\mathfrak{m}_{\lfloor 2^{k^{\tau}}\rfloor}(\xi)\big|+\big|\mathcal{L}_{\lfloor 2^{k^{\tau}}\rfloor}(\xi)\big|\lesssim 2^{-\chi k^{\tau}}\text{,}
\end{equation}
where we have used \eqref{minarcestcompletenew} together with the fact that $\lfloor 2^{k^\tau}\rfloor^{-d+\delta}\le 1/32$ for $k\gtrsim1$.

By \eqref{eqq1} and \eqref{eqq2} we get \eqref{comparisonmleta}, and returning to  \eqref{l2first} we get
\begin{equation}\label{l2firstcomplete}
\begin{split}
\big\|\mathcal{F}^{-1}_{\mathbb{Z}}\big[\big(\mathfrak{m}_{\lfloor 2^{k^{\tau}}\rfloor}-\mathcal{L}_{\lfloor 2^{k^{\tau}}\rfloor}\eta\big)\mathcal{F}_{\mathbb{Z}}[f]\big]\big\|_{\ell^2(\mathbb{Z})}
\lesssim 2^{-\chi k^{\tau}}\|f\|_{\ell^2(\mathbb{Z})}\text{.}
\end{split}
\end{equation}
Using the bound above in \eqref{rvarfinal} with $p=2$ yields
\begin{equation}
\begin{split}
\big\|V^r\big(A_{\lfloor 2^{k^{\tau}}\rfloor}f:\,k\in\mathbb{N}_0\big)\big\|_{\ell^2(\mathbb{Z})}
\lesssim\frac{r}{r-2}\|f\|_{\ell^2(\mathbb{Z})}+\|f\|_{\ell^2(\mathbb{Z})}\sum_{k\in\mathbb{N}_0}2^{-\chi k^{\tau}}\lesssim\frac{r}{r-2} \|f\|_{\ell^2(\mathbb{Z})}&\text{,}
\end{split}
\end{equation}
which yields the result in the case $p=2$.

For $p\in(1,\infty)$ with $p\neq 2$, we fix $p_0\in(1,\infty)$ such that $p$ is between $p_0$ and $2$ and note that 
\begin{equation}\label{lp0}
\begin{split}
\big\|\mathcal{F}^{-1}_{\mathbb{Z}}\big[\big(\mathfrak{m}_{\lfloor 2^{k^{\tau}}\rfloor}-\mathcal{L}_{\lfloor 2^{k^{\tau}}\rfloor}\eta\big)\mathcal{F}_{\mathbb{Z}}[f]\big]\big\|_{\ell^{p_0}(\mathbb{Z})}
\le\big\|\mathcal{F}^{-1}_{\mathbb{Z}}\big[\mathfrak{m}_{\lfloor 2^{k^{\tau}}\rfloor}\mathcal{F}_{\mathbb{Z}}[f]\big]\big\|_{\ell^{p_0}(\mathbb{Z})}+\big\|\mathcal{F}^{-1}_{\mathbb{Z}}\big[\mathcal{L}_{\lfloor 2^{k^{\tau}}\rfloor}\eta\mathcal{F}_{\mathbb{Z}}[f]\big]\big\|_{\ell^{p_0}(\mathbb{Z})}
\\
\lesssim\big\|A_{\lfloor 2^{k^{\tau}}\rfloor}f\big\|_{\ell^{p_0}(\mathbb{Z})}+\|f\|_{\ell^{p_0}(\mathbb{Z})}\lesssim \|f\|_{\ell^{p_0}(\mathbb{Z})}
\text{,}
\end{split}
\end{equation}
where for the second summand, one may use an identical argument to the one used in \cite{frld}, see the beginning of the proof of Theorem~3 therein. Since the operators $T_k\coloneq \ell^q(\mathbb{Z})\to \ell^q(\mathbb{Z})$, $q\in(1,\infty)$, such that for every $f\in \ell^2(\mathbb{Z})\cap\ell^q(\mathbb{Z})$ we have 
\[
T_kf=\mathcal{F}^{-1}_{\mathbb{Z}}\big[\big(\mathfrak{m}_{\lfloor 2^{k^{\tau}}\rfloor}-\mathcal{L}_{\lfloor 2^{k^{\tau}}\rfloor}\eta\big)\mathcal{F}_{\mathbb{Z}}[f]\big]
\]
are linear and satisfy the bounds $\|T_k\|_{\ell^2(\mathbb{Z})\to\ell^2(\mathbb{Z})}\lesssim 2^{-\chi k^{\tau}}$ and $\|T_k\|_{\ell^{p_0}(\mathbb{Z})\to\ell^{p_0}(\mathbb{Z})}\lesssim 1$, we may apply Riesz--Thorin interpolation between $p_0$ and $2$ to conclude that there exists a positive constant $\chi'=\chi'(p,p_0)$ such that
\[
\|T_k\|_{\ell^p(\mathbb{Z})\to\ell^p(\mathbb{Z})}\lesssim 2^{-\chi' k^{\tau}}\text{.}
\]
Using this in \eqref{rvarfinal} yields
\begin{equation}
\begin{split}
\big\|V^r\big(A_{\lfloor 2^{k^{\tau}}\rfloor}f:\,k\in\mathbb{N}_0\big)\big\|_{\ell^p(\mathbb{Z})}
\lesssim\frac{r}{r-2}\|f\|_{\ell^p(\mathbb{Z})}+\|f\|_{\ell^p(\mathbb{Z})}\sum_{k\in\mathbb{N}_0}2^{-\chi' k^{\tau}}\lesssim\frac{r}{r-2} \|f\|_{\ell^p(\mathbb{Z})}&\text{,}
\end{split}
\end{equation}
and by \eqref{finalsplitlongonly} the proof is complete.
\end{proof}

\end{document}